\newcommand*\MSC[1][1991]{\par\leavevmode\hbox{%
\textit{#1 Mathematical subject classification:\ }}}
\newcommand\blfootnote[1]{%
  \begingroup
  \renewcommand\thefootnote{}\footnote{#1}%
  \addtocounter{footnote}{-1}%
  \endgroup
}
\def \phi {\varphi}
\def \R {\mathbb{R}}
\def \G{\Gamma}
\def \vf{\varphi}
\newcommand{\Rn}{\mathbb R^n}
\newcommand{\Om}{\Omega}
\newcommand{\Hn}{\mathbb H^n}
\newcommand{\p}{\partial}
\newcommand{\bG}{\mathbb {G}}
\newcommand{\bg}{\mathfrak g}
\newcommand{\la}{\lambda}
\numberwithin{equation}{section}
\newcommand{\beq}{\begin{equation}}
\newcommand{\bea}[1]{\begin{array}{#1} }
\newcommand{\eeq}{ \end{equation}}
\newcommand{\ea}{ \end{array}}
\newcommand{\ds}{\displaystyle}
\newcommand{\ve}{\varepsilon}
\newcommand{\nh}{\nabla_H}
\newcommand{\sul}{\Delta_H}
\newcommand{\sa}{\langle}
\newcommand{\da}{\rangle}
\newcommand{\QK}{\boldsymbol {G\,(\mathbb{K})}}
\newcommand{\algg}{\mathfrak g}
\newcommand{\algn}{\mathfrak n}
\newcommand{\Kn}{\mathbb{K}^n}
\newcommand{\C}{\mathbb{C}}
\newcommand{\Quat}{\mathbb{H}}
\newcommand{\Vone}{\mathfrak h}
\newcommand{\Vtwo}{ \mathfrak v}
\newtheorem{theorem}{Theorem}[section]
\newtheorem{lemma}[theorem]{Lemma}
\newtheorem{proposition}[theorem]{Proposition}
\newtheorem{remark}[theorem]{Remark}
\newtheorem{definition}[theorem]{Definition}
\newtheorem{example}[theorem]{Example}
\numberwithin{equation}{section}
\begin{document}

\title[Overdetermined problems in groups,  etc.]{Overdetermined problems in groups of Heisenberg type: conjectures and partial results}

\blfootnote{\MSC[2020]{35N25, 35H20, 31C15}}
\keywords{Groups of Heisenberg type. Overdetermined problems. Horizontal $p$-Laplacian}

\date{}

\begin{abstract}
In this paper we formulate some conjectures in sub-Riemannian geometry concerning a characterisation of the Koranyi-Kaplan ball in a group of Heisenberg type through the existence of a solution to suitably overdetermined problems. We prove an integral identity that provides a rigidity constraint for one of the two problems. By exploiting some new invariances of these Lie groups, for domains having partial symmetry we solve these problems by converting them to known results for the classical $p$-Laplacian.
\end{abstract}

\author{Nicola Garofalo}

\address{Dipartimento d'Ingegneria Civile e Ambientale (DICEA)\\ Universit\`a di Padova\\ Via Marzolo, 9 - 35131 Padova,  Italy}
\vskip 0.2in
\email{nicola.garofalo@unipd.it}

\thanks{The first author has been supported in part by a Progetto SID (Investimento Strategico di Dipartimento): ``Aspects of nonlocal operators via fine properties of heat kernels", University of Padova, 2022. He has also been partially supported by a Visiting Professorship at the Arizona State University.}

\author{Dimiter Vassilev}

\address{University of New Mexico\\
Department of Mathematics and Statistics\\
311 Terrace Street NE\\
Albuquerque, NM  87106}
\vskip 0.2in
\email{vassilev@unm.edu}

\maketitle

\tableofcontents

\section{Introduction}\label{S:intro}

In 1970 R. Fosdick (now Professor Emeritus of Aerospace Engineering at the University of Minnesota) asked the following question: suppose that in a smooth domain $\Om\subset \Rn$ one has a solution to the Dirichlet problem $\Delta f = - 1$, such that $f = 0$ on $\p \Om$. Is it true that $f$ satisfies the overdetermined condition $\frac{\p f}{\p \nu} = c$ on $\p \Om$ if and only if $\Om$ is a ball $B(x_0,R)$ and
\[
f(x) = \frac{R^2-|x-x_0|^2}{2n}\ ?
\]
As it is well-known, in his celebrated paper \cite{se} J. Serrin provided a positive answer to Fosdick's question, even for a more general class of elliptic equations, by introducing in pde's what is nowadays known as the Alexandrov-Serrin method of moving planes.  In \cite{hans} H. Weinberger presented  an alternative approach to Serrin's result based on integral identities and the strong maximum principle. Both these works have been deeply influential and have generated an enormous amount of
research.

In this paper we formulate two conjectures, and present some progress toward their solution, about the characterisation via an overdetermined boundary value problem of a class of geometric objects which play the role of the Euclidean balls in certain nilpotent Lie groups. This latter statement should be taken with a proviso since, unlike the Euclidean balls, the sets of interest in this paper are not perimeter minimising, nor they have constant mean curvature. Nonetheless, they play an important role in analysis since they are the level sets of the fundamental solutions of:
\begin{itemize}
\item[(i)] The Euler-Lagrange equations of the natural $p$-energies, $1<p<\infty$ (see Theorems A and B below);
\item[(ii)] A class of pseudodifferential operators that represent the fractional powers of the conformal horizontal Laplacian in CR geometry (see Theorem C below).
\end{itemize}

The specific framework of this paper is that of a group of Heisenberg type $(\bG,\circ)$ with logarithmic cordinates $x = (z,\sigma)$, and the central character will be the Kor\'anyi gauge function
\begin{equation}\label{gauge}
N(x) = (|z|^4 + 16 |\sigma|^2)^{1/4},
\end{equation}
with its level sets, the \emph{gauge balls} $B_R(x_0) = \{x\in \bG\mid N(x_0^{-1}\circ x)<R\}$. When the center is the identity $e\in \bG$, we simply write $B_R$, instead of $B_R(e)$. The relevance of \eqref{gauge} goes back to the following remarkable 1973 discovery of G. Folland in \cite[Theorem 2]{Fo} which played a critical role in the seminal work \cite{FScpam}.

\medskip

\noindent \textbf{Theorem A.}\
\emph{The fundamental solution with pole at the group identity of the horizontal Laplacian $-\sul$ in the Heisenberg group $\Hn$ is given by
\begin{equation}\label{folland}
\mathscr E(x) = \frac{C(n)}{N(x)^{Q-2}},
\end{equation}
where $C(n)>0$ is a suitable explicit constant (see \eqref{C} below, in which one has to take $s= k= 1$ and $m=2n$), and $Q = 2n+2$ is the homogeneous dimension of $\Hn$}.

\medskip

Theorem A was generalised by A. Kaplan in \cite[Theorem 2]{Ka} to all Lie groups of Heisenberg type (see also the earlier work \cite[Theorem 1]{KP}, where the same result was proved for groups of Iwasawa type). To provide further instances of the role of \eqref{gauge}, and to state our results, we next introduce the class of relevant Lie groups. Consider a simply connected, stratified nilpotent Lie group $\bG$ of step two. This means that, if we denote its Lie algebra by $\bg = \mathfrak h\oplus \mathfrak v$, where $\mathfrak h$ is the bracket-generating layer and $\mathfrak v$ is the center, then we have $[\mathfrak h,\mathfrak h] = \mathfrak v$ and $[\mathfrak h,\mathfrak v] = \{0\}$. We let $m = \operatorname{dim} \mathfrak h$, $k = \operatorname{dim} \mathfrak v$, and indicate by
$\{e_1,...,e_m\}$,\ $\{\ve_1,....,\ve_k\}$,
fixed orthonormal basis of respectively $\mathfrak h$ and $\mathfrak v$.
As above, we continue to denote with $x = (z,\sigma)$, $z\in \mathfrak h, \sigma\in \mathfrak v$, the logarithmic coordinates of a point $x\in \bG$. Throughout this paper, we indicate with $Q = m + 2k$ the homogeneous dimension associated with the anisotropic group dilations $\delta_\la x = (\la z,\la^2 \sigma)$.

Consider the linear mapping $J:\mathfrak v \to$ End$(\mathfrak h)$, defined by
\begin{equation}\label{kap}
\sa J(\sigma)z,z'\da = \sa[z,z'],\sigma\da.
\end{equation}
By anti-symmetry of the bracket, it is clear that $J(\sigma)^\star = - J(\sigma)$, and that $\sa J(\sigma)z,z\da = 0$. The Baker-Campbell-Hausdorff formula (see \eqref{BCH} below) and \eqref{kap} prescribe the non-Abelian multiplication in $\bG$
\begin{align}\label{grouplaw}
& x \circ x'  = \big(z +z',\sigma + \sigma' + \frac 12 \sum_{\ell=1}^{k} \sa J(\ve_\ell)z,z'\da\ve_\ell\big).
\end{align}
If $L_x(x') = x \circ x'$ is the operator of left-translation in $\bG$, and $d L_x$ denotes its differential, we define left-invariant vector fields in $\bG$ by setting
\begin{equation}\label{vfields}
X_i = d L_x(e_i),\ i=1,...,m,\ \ \ \ T_\ell = d L_x(\ve_\ell),\ \ell=1,...,k.
\end{equation}
Given a function $f$, we denote by $|\nh f|^2 = \sum_{i=1}^m (X_i f)^2$ the left-invariant horizontal \emph{carr\'e du champ} associated with $\{e_1,...,e_m\}$, and consider the Dirichlet $p$-energy
\begin{equation}\label{pen}
\mathscr E_p(f) = \frac 1p \int |\nh f|^p dx,\ \ \ \ \ \ 1<p<\infty.
\end{equation}
When $p=2$, minimisers of \eqref{pen} are harmonic functions, i.e., solutions of the partial differential equation $\sul f = \sum_{i=1}^m X_i^2 f =0$. Using \eqref{grouplaw} and the definition of the vector fields $X_i$, one obtains
\begin{equation}\label{sulla}
\sul f= \Delta_z f + \frac 14 \sum_{\ell,\ell' = 1}^k \sa J(\ve_\ell)z,J(\ve_{\ell'})z\da \p_{\sigma_\ell}\p_{\sigma_{\ell'}} f + \sum_{\ell = 1}^k \p_{\sigma_\ell} \Theta_\ell f,
\end{equation}
where we have denoted $\Theta_\ell = \sum_{s=1}^m \langle J(\ve_\ell)z,e_s\rangle \p_{z_s}$.
The operator $\sul$ fails to be elliptic at every point $x\in \bG$, but by a famous result in \cite{Ho} it is hypoelliptic.

A stratified nilpotent Lie group $\bG$ of step two is called of \emph{Heisenberg type} if for every $\sigma\in \mathfrak v$ such that $|\sigma| = 1$, the mapping $J(\sigma)$ is orthogonal on $\mathfrak h$, see \cite{Ka}. This is equivalent to saying that for every $\sigma\in \mathfrak v$ one has
\begin{equation}\label{J2}
J(\sigma)^2 = - |\sigma|^2\  I_{\mathfrak h},
\end{equation}
thus in particular $J$ induces a complex structure in $\bG$.
The prototype of this class of Lie groups is the Heisenberg group $\Hn$, in which there exists one single map $J : \mathfrak h\to \mathfrak h$, given by $J z = z^\perp$. Besides $\Hn$,
there exists in nature a very rich supply of such Lie groups. In  the Iwasawa decomposition of a simple Lie group of rank one,   the nilpotent component  is a group of Heisenberg type, see \cite{Ka2}, \cite{Adam85} and \cite{CDKR}. We remark that \eqref{J2} introduces important additional symmetries in $\bG$. For instance, it gives
\begin{equation}\label{ort}
|J(\sigma)z|^2 = \sa J(\sigma)z,J(\sigma)z\da = |\sigma|^2 |z|^2,
\end{equation}
and by polarization of \eqref{kap} and \eqref{ort} we obtain
\begin{equation}\label{ip}
\sa J(\ve_\ell)z,J(\ve_{\ell'})z\da = |z|^2 \delta_{\ell \ell'},\ \ \ \ \ \ \ \ \ \ell, \ell' = 1,...,k.
\end{equation}
As a consequence of \eqref{ip}, the horizontal Laplacian \eqref{sulla} becomes
\begin{equation}\label{sullaH}
\sul f = \sum_{i=1}^m X_i^2 f = \Delta_z f + \frac{|z|^2}4 \Delta_\sigma f + \sum_{\ell = 1}^k \p_{\sigma_\ell} \Theta_\ell f.
\end{equation}

When $p\not=2$ in \eqref{pen}, minimisers are weak solutions of the following quasilinear Euler-Lagrange equation
\begin{equation}\label{pH}
\Delta_{H,p} f = \operatorname{div}_H (|\nh f|^{p-2} \nh f) = \sum_{i=1}^m X_i(|\nh f|^{p-2} X_i f) = 0.
\end{equation}
Degenerate nonlinear equations of this type arise, e.g., in the foundational work of Kor\'anyi and H. M. Reimann \cite{KoR}, \cite{KoRaim}, and of Mostow \cite{Mo} and Mostow and Margulis \cite{MM}, see also \cite{CDG}, \cite{D}, \cite{CDGcap} and \cite{HH}.

As the following result shows, the gauge \eqref{gauge} is also connected to the nonlinear equation \eqref{pH}, see \cite[Theorem 2.1]{CDGcap} (the case $p=Q$ was also independently discovered in \cite{HH}).

\medskip

\noindent \textbf{Theorem B.}\
\emph{For any $1<p<\infty$ there exists an explicit universal constant $C_p>0$ such that
the fundamental solution of the nonlinear operator $\Delta_{H,p}$ (with singularity in $e\in \bG$) is given by
\begin{equation}\label{fs}
\Gamma_p(x) = \begin{cases}
\frac{C_p}{N(x)^{\frac{Q-p}{p-1}}},\ \ \ \ \text{when}\ p\not=Q,
\\
\\
C_p \log N(x),\ \ \ \ \ \text{when}\ p=Q.
\end{cases}
\end{equation}}
We mention that, inspired by \eqref{fs}, it was proved in \cite{CDGcap} that the fundamental solutions $\G_p(x,x_0)$ of general classes of nonlinear equations modelled on \eqref{pH} behave near their singularity $x_0$ as follows:
\begin{equation*}\label{psingsol}
C\bigg(\frac{d(x,x_0)^p}{\operatorname{Vol}(B(x_0,d(x,x_0)))}\bigg)^{\frac{1}{p-1}}
\leq
\G_p(x,x_0)
\leq C^{-1}
\bigg(\frac{d(x,x_0)^p}{\operatorname{Vol}(B(x_0,d(x,x_0)))}\bigg)^{\frac{1}{p-1}}.
\end{equation*}
This estimate generalises a celebrated result of Nagel, Stein and Wainger in the linear case $p = 2$, see \cite{NSW}.

\medskip

A third example attesting to the importance of the function \eqref{gauge} originates from conformal geometry. In the paper \cite{BFM} Branson, Fontana and Morpurgo introduced in $\Hn$, for $0<s\le 1$, a  pseudodifferential operator $\mathscr L_{s}$ which is the counterpart of the fractional powers of the  conformal Laplacian from Riemannian geometry. In a group of Heisenberg type $\bG$ such nonlocal operators $\mathscr L_{s}$ are defined by the spectral formula
\begin{equation}\label{bfmH}
\mathscr L_s = 2^s (-\Delta_\sigma)^{s/2} \frac{\G(-\frac 12\sul (-\Delta_\sigma)^{-1/2} + \frac{1+s}2)}{\G(-\frac 12 \sul (-\Delta_\sigma)^{-1/2} + \frac{1-s}2)},
\end{equation}
where $\G(z) = \int_0^\infty t^{z-1} e^{-t} dt$ denotes Euler gamma function, and $\sul$ is as in \eqref{sullaH}. The pseudodifferential operator $\mathscr L_s$ totally differs from $(-\sul)^s$, for which no geometry is involved. The two operators only coincide in the limit as $s\nearrow 1$. We have in fact, on one hand $(-\sul)^s\longrightarrow - \sul$ as $s\nearrow 1$. On the other hand, \eqref{bfmH}  gives as $s\nearrow 1$
\[
\mathscr L_s\ \longrightarrow\ 2 (-\Delta_\sigma)^{1/2} \frac{\G(-\frac 12\sul (-\Delta_\sigma)^{-1/2} + 1)}{\G(-\frac 12 \sul (-\Delta_\sigma)^{-1/2})} = 2 (-\Delta_\sigma)^{1/2}(-\frac 12)\sul (-\Delta_\sigma)^{-1/2} = - \sul.
\]
In their work \cite{FGMT} Frank, del Mar Gonz\'alez,  Monticelli and Tan introduced and solved a new remarkable extension problem of Caffarelli-Silvestre type (but very different from it!) for the operator $\mathscr L_{s}$, see also the works of Roncal and Thangavelu \cite{RT}, \cite{RT2} for a parabolic version of it. In the paper \cite{GTpot}, see also the companion works \cite{GT}, \cite{GTjam}, the first named author and Tralli constructed the following heat kernel for the Frank, del Mar Gonz\'alez, Monticelli and Tan extension problem in any group of Heisenberg type
\begin{align}\label{hk}
q_{(s)}((z,\sigma),t,y) & =  \frac{2^k}{(4\pi t)^{\frac{m}2 +k +1-s}} \int_{\R^k} e^{- \frac it \langle \sigma,\la\rangle}   \left(\frac{|\la|}{\sinh |\la|}\right)^{\frac m2+1-s}
\\
\notag
& \times e^{-\frac{|z|^2 +y^2}{4t}\frac{|\la|}{\tanh |\la|}} d\la.
\notag
\end{align}
The reader should note the critical appearance in \eqref{hk} of the fractal dimension $Q + 2(1-s)$ in the factor $t^{\frac{m}2 +k +1-s} = t^{\frac{Q+2(1-s)}2}$. Now, from general parabolic theory, we know that the heat kernel for the operator $\mathscr L_s$ is given by function $\mathscr K_{(s)}(z,\sigma,t) = (4\pi t)^{1-s} q_{(s)}((z,\sigma),t,0)$. Again parabolic theory says that $\mathscr E_{(s)}(z,\sigma) = \int_0^\infty \mathscr K_{(s)}(z,\sigma,t) dt$ is the fundamental solution of the pseudodifferential operator \eqref{bfmH}. Despite the fact that there exists no hint of the magic function \eqref{gauge} in the heat kernel \eqref{hk},  the following result holds, see \cite[Theorem 1.2]{GT}.

\medskip

\noindent \textbf{Theorem C.}\
\emph{Let $\bG$ be a group of Heisenberg type.  For any $0<s\le 1$ the fundamental solution of \eqref{bfmH} with pole at the group identity $e\in \bG$ and vanishing at infinity is given by
\begin{equation}\label{Esottos}
\mathscr E_{(s)}(z,\sigma) = \frac{C_{s}}{N(x)^{Q-2s}},
\end{equation}
where
\begin{equation}\label{C}
C_{s} = \frac{2^{\frac m2 + 2k-3s-1} \G(\frac 12(\frac m2+1-s)) \G(\frac 12(\frac m2 + k -s))}{\pi^{\frac{m+k+1}2} \G(s)}.
\end{equation}}

\medskip

Theorems A, B and C provide compelling evidence of the relevance of the gauge function $N(x)$ in \eqref{gauge} in the analysis of groups of Heisenberg type. In this note we are interested in characterising the level sets of such function via two overdetermined boundary value problems involving \eqref{pH}. The former can be stated as follows. Given a connected, bounded open set $\Om\subset \bG$, and a number $1<p<\infty$, we consider a solution to
\begin{equation}\label{i3}
\begin{cases}
\Delta_{H,p} f = - |z|^p\ \ \ \ \ \ \ \ \text{in}\ \Om,
\\
f = 0\ \ \ \ \ \ \ \ \ \ \ \ \ \ \ \ \ \ \ \ \text{on}\ \p \Om.
\end{cases}
\end{equation}
The non-homogeneous Dirichlet problem \eqref{i3} admits a unique nonnegative weak solution $f\in {\overset{\circ}{W}}_{H}^{1,p}(\Om)$, see \cite{D}. By this we mean that for every $\vf\in {\overset{\circ}{W}}_{H}^{1,p}(\Om)$ we have
\begin{equation}\label{ws}
\int_{\Om} |\nh f|^{p-2} \sa \nh f,\nh \vf\da dx = \int_{\Om} |z|^p \vf dx.
\end{equation}
We pose the following.

\vskip 0.2in

\noindent \textbf{Problem 1:} \emph{Let $f\ge 0$ be the weak solution to \eqref{i3}. Suppose in addition that  there exist $c\ge 0$ such that (in the measure theoretic sense of \eqref{gradconv}
 below)
\begin{equation}\label{od}
|\nh f|_{\big|\p\Om} = c |z|.
\end{equation}
Is it true that (up to a left-translation along the center $\exp \mathfrak v$ of the group $\bG$) $\Om$ is a ball centred at $e$ of the Koranyi gauge \eqref{gauge}?}

\vskip 0.2in

Before proceeding we explain in which sense we intend the boundary conditions to hold. Indicating with $h(x) = h(z,\sigma) = |z|$, we assume  that given $\ve>0$ there exists an open set $O = O(\ve) \supset \p \Om$ such that for some $c\ge 0$
\begin{equation}\label{gradconv}
||\nabla_H f(x)| - c\ h(x)|<\ve,\ \ \ \ f(x)<\ve,
\end{equation}
for a.e. $x\in \Om\cap O$ with respect to Lebesgue $(m+k)$-measure. This is a bi-invariant Haar measure in $\bG$.

\medskip

To provide the reader with an understanding of Problem 1 we start discussing its conjectured \emph{optimal geometric configuration}.
If $B_R = \{x\in \bG\mid N(x)<R\}$ is the gauge ball centred at the identity $e\in \bG$ and with radius $R$, then  problem \eqref{i3} admits an explicit solution.

\begin{proposition}\label{P:gauge}
The Dirichlet problem \eqref{i3} admits the following unique positive solution in $B_R$
\begin{equation}\label{solball}
f(x) = \frac{p-1}{2p(Q+p)^{\frac{1}{p-1}}} \left(R^{\frac{2p}{p-1}} - N(x)^{\frac{2p}{p-1}}\right),
\end{equation}
satisfying \eqref{od} with
\begin{equation}\label{cball}
c = \left(\frac{R^2}{Q+p}\right)^{\frac{1}{p-1}}.
\end{equation}
\end{proposition}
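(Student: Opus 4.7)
The plan is to verify \eqref{solball} directly by computation; uniqueness of the nonnegative weak solution in $\mathring W_H^{1,p}(B_R)$ is classical and follows from \cite{D}, so the real content is existence together with checking the overdetermined condition. Write $f = C_0(R^\beta - N^\beta)$ with $\beta := 2p/(p-1)$ and $C_0 := \frac{p-1}{2p(Q+p)^{1/(p-1)}}$. The boundary value $f|_{\p B_R} = 0$ and positivity in $B_R$ are immediate from $N<R$ inside. The computational backbone is the H-type gauge identity $|\nh N|^2 = |z|^2/N^2$; multiplying through by $4N^3$ and using $X_i z_j = \delta_{ij}$ together with $X_i\sigma_\ell = \tfrac 12\langle J(\varepsilon_\ell)z, e_i\rangle$, this is equivalent to the vector identity
\[
N^3\, \nh N = |z|^2 z + 4 J(\sigma) z,
\]
obtained by differentiating $N^4 = |z|^4 + 16|\sigma|^2$.

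First I would compute $\nh f = -C_0 \beta N^{\beta-1}\nh N$, hence $|\nh f| = C_0\beta N^{\beta-2}|z|$. Restricting to $\p B_R = \{N = R\}$ recovers \eqref{od} with $c = C_0\beta R^{2/(p-1)} = (R^2/(Q+p))^{1/(p-1)}$, which matches \eqref{cball}. The exponent $\beta$ is selected precisely so that the power of $N$ in $|\nh f|^{p-2}\nh f$ is exactly $3$: indeed $(\beta-2)(p-2) + (\beta-1) = 3$, and substituting the vector identity above then eliminates $N$ altogether,
\[
|\nh f|^{p-2}\nh f = -(C_0\beta)^{p-1}\bigl(|z|^p z + 4|z|^{p-2} J(\sigma) z\bigr).
\]

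Next I would take the horizontal divergence of the two summands separately. The Euclidean-type term is immediate: $\operatorname{div}_H(|z|^p z) = (p+m)|z|^p$, using $X_i z_i = 1$ and $X_i(|z|^p) = p|z|^{p-2}z_i$. The genuinely sub-Riemannian term $\operatorname{div}_H(|z|^{p-2}J(\sigma)z)$ splits into two pieces: the contribution $X_i(|z|^{p-2})(J(\sigma)z)_i$ sums to $(p-2)|z|^{p-4}\langle z, J(\sigma)z\rangle = 0$ by antisymmetry of $J(\sigma)$; the second, using $X_i\sigma_\ell = \tfrac 12(J(\varepsilon_\ell)z)_i$ and $X_i((J(\varepsilon_\ell)z)_i) = (J(\varepsilon_\ell))_{ii} = 0$, gives
\[
\sum_i X_i\bigl((J(\sigma)z)_i\bigr) = \tfrac 12\sum_{\ell=1}^k |J(\varepsilon_\ell)z|^2 = \tfrac{k}{2}|z|^2,
\]
where the last equality is exactly the H-type identity \eqref{ort}. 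Combining,
\[
\sulp f = -(C_0\beta)^{p-1}(m + p + 2k)|z|^p = -(C_0\beta)^{p-1}(Q+p)|z|^p = -|z|^p,
\]
the final equality because $C_0\beta = (Q+p)^{-1/(p-1)}$ by the choice of $C_0$.

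The only step that uses more than the chain rule is this last divergence identity. Both the antisymmetry of $J(\sigma)$ and the H-type condition $|J(\varepsilon_\ell)z| = |z|$ must work in tandem; without \eqref{ort} the sum $\sum_\ell |J(\varepsilon_\ell)z|^2$ would be a nontrivial quadratic form in $z$ rather than a scalar multiple of $|z|^2$, and $\sulp f$ would not reduce to a constant multiple of $|z|^p$. Everything else is forced: the parameters $\beta$ and $C_0$ are pinned down by the twin requirements that (i) $N$ cancel out of $|\nh f|^{p-2}\nh f$ and (ii) the coefficient of $|z|^p$ equal $-1$, yielding precisely the formulas \eqref{solball} and \eqref{cball}.
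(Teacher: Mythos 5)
Your proof is correct, and it takes a genuinely different route from the paper's. The paper first establishes the radial reduction formula of Lemma \ref{L:Lpradial} (itself resting on the chain-rule identity \eqref{spilcomp} and on both identities in \eqref{iiH}), then makes the ansatz $F(N)=\la N^\alpha$ and solves the resulting ODE for $\alpha$ and $\la$. You instead verify the candidate directly in coordinates: the explicit vector identity $N^3\nh N = |z|^2 z + 4J(\sigma)z$ (which is correct, and which by taking norms re-derives the first identity in \eqref{iiH}) lets you write $|\nh f|^{p-2}\nh f$ with no $N$-dependence, after which the horizontal divergence is computed term by term. A pleasant feature of your version is that the constant $Q+p=m+p+2k$ is assembled transparently, with $m+p$ coming from the Euclidean term $\operatorname{div}_H(|z|^pz)$ and $2k$ from the genuinely sub-Riemannian term $\operatorname{div}_H(|z|^{p-2}J(\sigma)z)$ via \eqref{ort}; you also never need the second identity in \eqref{iiH} on $\sul N$. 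What the paper's route buys is that the ODE \emph{finds} the exponent and constant rather than verifying a guess, and Lemma \ref{L:Lpradial} is a reusable tool. One cosmetic remark: in your divergence of the $J(\sigma)z$ term, the vanishing trace contribution should be read as $\sum_i\partial_{z_i}\big((J(\ve_\ell)z)_i\big)=\operatorname{tr}J(\ve_\ell)=0$ for each fixed $\ell$, the $\sigma$-derivative part of $X_i$ applied to $(J(\sigma)z)_i$ being exactly what produces the $\tfrac12\sum_\ell |J(\ve_\ell)z|^2$ term; your formulas are right but worth stating with the two parts of $X_i$ separated.
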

For the proof of this result see Section \ref{S:prelim} below. It is worth emphasising here that, when $p=2$, \eqref{solball} provides the following solution to \eqref{i3},
\begin{equation}\label{serrin}
f(x) = \frac{R^4 - N(x)^4}{4(Q+2)} = \frac{R^4 - |z|^4 - 16 |\sigma|^2}{4(Q+2)}.
\end{equation}
It should not be surprising that the function in \eqref{serrin} be real-analytic since, when $p=2$, such is the right-hand side in \eqref{i3}, and in every Metivier group, and therefore in particular in every group of Heisenberg type, $\sul$ is analytic hypoelliptic, see \cite{met1}, \cite{met2}. We also note that, when $p\not=2$, the intrinsic smoothness near the group identity $e\in \bG$ of the function in \eqref{solball} is the Folland-Stein H\"older class $\Gamma^{2,\alpha}$, with $\alpha = \frac{2}{p-1}$, which is better than the best possible local regularity $\Gamma^{1,\beta}$ known for solutions to $\Delta_{H,p} f = 0$, see \cite{zhong}, \cite{Gmanu2}, \cite{muzh} and the more recent contributions \cite{CCGme}, \cite{cittimu} and \cite{CCZ}. On the other hand, the function $f$ in \eqref{solball} is $C^\infty$ in a neighbourhood of $\p B_R$, and  moreover its Riemannian gradient $\nabla f$ does not vanish on the whole of $\p B_R$ (although by \eqref{nhball} below its horizontal gradient $\nh f$ does vanish on $\p B_R$!). This can be easily seen observing that if $\mathscr Z$ denotes the generator of the anisotropic group dilations in $\bG$, $\delta_\la(x) = (\la z,\la^2 \sigma)$, then we have from \eqref{solball} for every $x\not= e$,
\[
\mathscr Z f(x) = - \frac{p-1}{2p(Q+p)^{\frac{1}{p-1}}} \mathscr Z(N(x)^{\frac{2p}{p-1}}) = - \frac{1}{(Q+p)^{\frac{1}{p-1}}} N(x)^{\frac{2p}{p-1}}.
\]
This shows that $\mathscr Z f = - \frac{1}{(Q+p)^{\frac{1}{p-1}}} R^{\frac{2p}{p-1}}<0$ on $\p B_R$, and since $\mathscr Z f(x) = \langle \nabla f(x),\mathscr Z(x)\rangle$, we infer that, unlike $\nh f$, the Riemannian gradient $\nabla f$ does not vanish on $\p B_R$, not even on the \emph{characteristic set} $\Sigma = \{(0,\sigma)\mid |\sigma| = \frac{R^2}4\}$.
Since $f>0$ in $B_R$ and $f = 0$ on $\p B_R$, one therefore has $\nu = - \frac{\nabla f}{|\nabla f|}$.

\medskip

With this being said, we recall that in a group of Heisenberg type $\bG$ the function $N(x)$ in \eqref{gauge} satisfies the  two (important) identities \eqref{iiH} below.
If we let $f$ denote the function in \eqref{solball}, then a simple computation which uses \eqref{iiH} shows that for the gauge ball $B_R$ one has
\begin{equation}\label{nhball}
|\nh f(x)|^p = \frac{1}{(Q+p)^{\frac{p}{p-1}}} N(x)^{\frac{2p}{p-1}} |z|^p.
\end{equation}
If we now compare \eqref{nhball} with \eqref{solball}, we are lead to the discovery that, in the ball $B_R$, the function
\begin{equation*}\label{Pball}
P(x)  = |\nh f(x)|^p + \frac{2p}{(p-1)(Q+p)} f(x) |z|^p,
\end{equation*}
satisfies the following pointwise constraint
\begin{equation}\label{Pballcon}
P(x) = c^p |z|^p.
\end{equation}
It is clear that the identity \eqref{Pballcon} trivially implies the integral constraint
\begin{equation}\label{rigprop}
\int_{B_R} P(x) dx = c^p \int_{B_R} |z|^p dx,
\end{equation}
with $c>0$ as in \eqref{cball}.

These considerations lead us to introduce our first result, Theorem \ref{T:main} below. The latter shows that, if a \emph{general domain} $\Om\subset \bG$ admits a solution $f$ of \eqref{i3} satisfying the overdetermined condition \eqref{od}, then  the integral rigidity property \eqref{rigprop} continues to be valid for $\Om$.
\medskip

\begin{theorem}\label{T:main}
Let $f$ be the solution of the Dirichlet problem \eqref{i3} in a smooth domain $\Om\subset \bG$, and suppose that $f$ satisfy the overdetermined condition \eqref{od}. Suppose in addition that $f\in \G^{2,\alpha}(\overline \Om)$ for some $\alpha\in (0,1)$, and that $\nabla f$ (the Riemannian gradient of $f$) does not vanish on $\p \Om$. Then we must have $c>0$, and moreover the function defined in $\Om$ by the equation
\begin{equation}\label{Pomega}
P(x)  = |\nh f(x)|^p + \frac{2p}{(p-1)(Q+p)} f(x) |z|^p,
\end{equation}
 satisfies the following integral rigidity property
\begin{equation}\label{ii}
\int_{\Om} P(x) dx = c^p \int_{\Om} |z|^p dx.
\end{equation}
\end{theorem}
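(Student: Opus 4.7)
The argument I would pursue is a Pohozaev--Rellich type integral identity in the spirit of Weinberger's proof of Serrin's theorem, combined with the basic energy identity obtained by using $f$ as its own test function. Inserting $\vf = f$ into the weak formulation \eqref{ws} yields at once
\begin{equation*}
\int_\Om |\nh f|^p\, dx \;=\; \int_\Om f\,|z|^p\, dx, \qquad\qquad\text{(E)}
\end{equation*}
so the task reduces to producing a \emph{second} identity that ties $c^p \int_\Om |z|^p\,dx$ to $\int_\Om f|z|^p\,dx$. This will come from a Pohozaev-type test against the generator of the anisotropic dilations $\mathscr Z = \sum_i z_i\partial_{z_i} + 2\sum_\ell \sigma_\ell\partial_{\sigma_\ell}$.

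Multiplying $\Delta_{H,p}f = -|z|^p$ by $\mathscr Z f$ and integrating, the commutator relation $[X_i,\mathscr Z] = X_i$ --- a direct consequence of the $\delta_\lambda$-homogeneity of degree one of the $X_i$ --- produces
\begin{equation*}
\sum_i |\nh f|^{p-2} X_i f \cdot X_i(\mathscr Z f) \;=\; |\nh f|^p + \tfrac{1}{p}\,\mathscr Z(|\nh f|^p).
\end{equation*}
Combined with $\mathscr Z(|z|^p) = p|z|^p$, the vanishing of the Euclidean divergence of each $X_i$, the identity $\operatorname{div}\mathscr Z = Q$, and $f = 0$ on $\partial\Om$, integration by parts gives the Pohozaev identity
\begin{align*}
(Q+p)\int_\Om f|z|^p\,dx &= \tfrac{Q-p}{p}\int_\Om |\nh f|^p\,dx \\
&\quad + \int_{\partial\Om} \Bigl[\mathscr Z f\,|\nh f|^{p-2}\langle \nh f,\nu\rangle_H - \tfrac{1}{p}|\nh f|^p\langle \mathscr Z,\nu\rangle\Bigr]\,d\sigma,
\end{align*}
where $\langle \nh f,\nu\rangle_H := \sum_i X_i f\,\langle X_i,\nu\rangle$.

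Now comes the key boundary reduction. Since $f = 0$ and $\nabla f$ is non-vanishing on $\partial\Om$, we have $\nu = -\nabla f/|\nabla f|$; setting $\kappa^2 := \sum_i\langle X_i,\nu\rangle^2$ so that $|\nh f| = |\nabla f|\,\kappa$, a short pointwise computation yields the striking identity
\begin{equation*}
\mathscr Z f \cdot |\nh f|^{p-2}\langle \nh f,\nu\rangle_H \;=\; |\nh f|^p\langle \mathscr Z,\nu\rangle \qquad\text{on}\ \partial\Om.
\end{equation*}
The boundary integrand therefore condenses to $\tfrac{p-1}{p}|\nh f|^p\langle\mathscr Z,\nu\rangle$. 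Inserting the overdetermined condition $|\nh f| = c|z|$ on $\partial\Om$ and applying the divergence theorem to the vector field $|z|^p\mathscr Z$ (noting that $\mathscr Z(|z|^p) + |z|^p\operatorname{div}\mathscr Z = (Q+p)|z|^p$) converts this boundary integral into $\tfrac{(p-1)(Q+p)}{p}\,c^p\int_\Om |z|^p\,dx$. Substituting everything back into the Pohozaev identity above and using (E) gives
\begin{equation*}
(p-1)(Q+p)\,c^p\int_\Om |z|^p\,dx \;=\; \bigl[Q(p-1) + p(p+1)\bigr]\int_\Om f|z|^p\,dx,
\end{equation*}
which, after rewriting $\int_\Om P\,dx = \tfrac{Q(p-1)+p(p+1)}{(p-1)(Q+p)}\int_\Om f|z|^p\,dx$ via (E), is precisely \eqref{ii}. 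The strict positivity $c > 0$ then follows since $\int_\Om f|z|^p\,dx > 0$ ($f\ge 0$ is non-trivial and $\{z = 0\}$ is a null set in $\bG$).

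The main obstacle is the rigorous justification of the integration by parts and of the pointwise boundary identity given only $\Gamma^{2,\alpha}(\overline\Om)$ regularity and the measure-theoretic reading \eqref{gradconv} of the overdetermined condition. The natural remedy is to carry out all integrations on the interior sublevel sets $\{f > \eta\}$ --- whose boundaries $\{f = \eta\}$ are smooth regular hypersurfaces for all sufficiently small $\eta > 0$, by continuity of $\nabla f$ and its non-vanishing on $\partial\Om$ --- and then pass to the limit $\eta\to 0^+$, using \eqref{gradconv} to control the boundary integrals uniformly. Care is also needed at the characteristic set $\Sigma = \{\kappa = 0\}\subset \partial\Om$, where the factor $|\nh f|^{p-2}$ may be singular when $p<2$; this is handled by the algebraic rewriting $|\nh f|^{p-2}\langle \nh f,\nu\rangle_H = -|\nh f|^p/|\nabla f|$, which remains bounded, combined with the fact that $\Sigma$ has vanishing surface measure in the sub-Riemannian setting.
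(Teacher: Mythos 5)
Your proposal is correct and follows essentially the same route as the paper: a Rellich--Pohozaev identity obtained by pairing the equation with $\mathscr Z f$, the commutator $[X_i,\mathscr Z]=X_i$, the boundary reduction via $\nu=-\nabla f/|\nabla f|$ collapsing the two boundary terms into $\tfrac{p-1}{p}|\nh f|^p\langle\mathscr Z,\nu\rangle$, the divergence identity $\operatorname{div}(|z|^p\mathscr Z)=(Q+p)|z|^p$, and the energy identity (E); your final relation $(p-1)(Q+p)c^p\int_\Om|z|^p\,dx=[Q(p-1)+p(p+1)]\int_\Om f|z|^p\,dx$ is exactly the one the paper reaches before reassembling $P$. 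The only cosmetic differences are that you deduce $c>0$ from this identity rather than from the paper's separate Lemma computing $\int_\Om|z|^p\,dx=c^p\int_{\p\Om}|z|^p|\nabla f|^{-1}\,d\sigma$, and you state the Pohozaev identity with the PDE already inserted rather than as a standalone Rellich identity.
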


\medskip

The proof of Theorem \ref{T:main} will be given in Section \ref{S:main} below. Notice that since $f = 0$ on $\p \Om$ in view of \eqref{i3}, the overdetermined assumption \eqref{od} implies that $P(x) = c^p |z|^p$ on $\p \Om\setminus \Sigma$, where we have denoted by $\Sigma$ the characteristic set of $\Om$ (this is the set where the vector fields \eqref{Xi} become tangent to $\p \Om$). The conclusion \eqref{ii} guarantees that this is also true inside $\Om$, but in an integral sense.
In light of Theorem \ref{T:main} and of the above considerations, we next propose the following conjecture. In Proposition \ref{P:iips} below we show that such conjecture has a positive answer in several geometrically significant situations.

\vskip 0.2in

\noindent \textbf{Problem 2:} \emph{Let $\Om\subset \bG$ be a (smooth)  connected and bounded open set, and let $f$ be a solution to \eqref{i3}, satisfying \eqref{od}.
Is it true that the pointwise identity \eqref{Pballcon} hold at every point $x\in \Om$?}

\vskip 0.2in

Proposition \ref{P:gauge} raises the obvious problem whether, given a group of Heisenberg type $\bG$, there exist other bounded open sets in $\bG$ that support a solution to the overdetermined problem \eqref{i3}, \eqref{od}. To address this issue,
we indicate with $O(\mathfrak h)$ the orthogonal group of the vector space $\mathfrak h$, where as above $\bg = \mathfrak h \oplus \mathfrak v$ is the Lie algebra of $\bG$. Now, unlike the standard Laplacian, the partial differential operators \eqref{sullaH} or \eqref{pH} are not invariant with respect to action of $O(\mathfrak h)$. This leads us to identify a subgroup of $O(\mathfrak h)$ which is directly connected to a basic invariance property, expressed by Proposition \ref{L:Hlapinv} below, of the partial differential equations of interest in this work. Henceforth, we denote by $\mathscr S(\mathfrak h)$ the subgroup of those transformations $S\in O(\mathfrak h)$ such that for every $\sigma\in \mathfrak v$
\begin{equation}\label{commell}
SJ(\sigma) = J(\sigma) S.
\end{equation}
We will refer to  $\mathscr S(\mathfrak h)$ as the $J-$\emph{invariant} orthogonal group of $\Vone$.  In the next result we indicate with $\Delta_{H,\infty}$ the fully nonlinear differential operator defined by \eqref{LapHinfty} below.

\begin{proposition}\label{L:Hlapinv}
Let $S\in \mathscr S(\mathfrak h)$. Given  a function $f:\bG\to \R$, set
$g(z,\sigma) = f(Sz,\sigma)$.
One has
\begin{equation}\label{invlap}
\sul g(z,\sigma) = \sul f(Sz,\sigma),
\end{equation}
and also
\begin{equation}\label{invlapinfty}
\Delta_{H,\infty} g(z,\sigma) = \Delta_{H,\infty} f(Sz,\sigma).
\end{equation}
Finally, for any $1<p<\infty$ we have
\begin{equation}\label{invplap}
\Delta_{H,p} g(z,\sigma) = \Delta_{H,p} f(Sz,\sigma).
\end{equation}
\end{proposition}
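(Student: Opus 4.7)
The plan is to reduce the entire proposition to a single structural observation: under the commutation hypothesis \eqref{commell}, the map $\Phi:\bG\to\bG$ defined by $\Phi(z,\sigma)=(Sz,\sigma)$ is a Lie group automorphism. Once this is in place, each of the identities \eqref{invlap}, \eqref{invlapinfty}, \eqref{invplap} follows from how left-invariant vector fields transform under such an automorphism, combined with the orthogonality of $S$.

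First I would verify the automorphism claim directly from \eqref{grouplaw}. One has
\begin{equation*}
\Phi(x)\circ\Phi(x')=\Bigl(Sz+Sz',\,\sigma+\sigma'+\tfrac12\sum_{\ell}\langle J(\ve_\ell)Sz,Sz'\rangle\ve_\ell\Bigr),
\end{equation*}
while $\Phi(x\circ x')$ has the same form with $\langle J(\ve_\ell)Sz,Sz'\rangle$ replaced by $\langle J(\ve_\ell)z,z'\rangle$. The two agree because $\langle J(\ve_\ell)Sz,Sz'\rangle=\langle S^{T}J(\ve_\ell)Sz,z'\rangle=\langle J(\ve_\ell)z,z'\rangle$, using $S^{T}S=I$ and \eqref{commell}. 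Invertibility of $\Phi$ is automatic since $S^{-1}=S^{T}$ still lies in $\mathscr S(\mathfrak h)$.

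Next, since $\Phi$ is a smooth group automorphism with $d\Phi_e(e_i)=Se_i=\sum_j S_{ji}e_j$ and $d\Phi_e(\ve_\ell)=\ve_\ell$, the pushforward of the left-invariant vector field $X_i$ is the left-invariant field $\sum_j S_{ji}X_j$. Concretely, writing $g=f\circ\Phi$, this gives the covariance rule
\begin{equation*}
X_ig(x)=\sum_{j=1}^{m}S_{ji}\,(X_jf)(\Phi(x)).
\end{equation*}
Squaring, summing over $i$, and using the orthogonality relation $\sum_i S_{ji}S_{ki}=\delta_{jk}$ yields the pointwise identity $|\nh g|^{2}(x)=|\nh f|^{2}(\Phi(x))$.

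The three claims of the proposition then follow by iterating this rule. For \eqref{invlap}, applying the covariance twice gives $X_iX_ig=\sum_{j,k}S_{ji}S_{ki}(X_kX_jf)\circ\Phi$, and summation in $i$ collapses via orthogonality to $\sul f\circ\Phi$. For \eqref{invplap}, set $h_j=|\nh f|^{p-2}X_jf$; the covariance rule together with invariance of $|\nh f|^{2}$ gives $|\nh g|^{p-2}X_ig=\sum_j S_{ji}(h_j\circ\Phi)$, after which a second application of the rule and a summation in $i$ reduces $\Delta_{H,p}g$ to $\sum_j(X_jh_j)\circ\Phi=\Delta_{H,p}f\circ\Phi$. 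The identity \eqref{invlapinfty} is obtained by the same orthogonality contraction applied to the expression for $\Delta_{H,\infty}$ recalled in \eqref{LapHinfty}. The only genuinely substantive step is the automorphism verification, where the hypothesis \eqref{commell} is used in an essential way; everything else is a change of basis.
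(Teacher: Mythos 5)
Your proof is correct, and it takes a genuinely different route from the paper's. The paper proves the proposition by brute-force coordinate computation: it first establishes Lemma \ref{L:gradinv} (the invariance of the pairing $\sa \nh F,\nh f\da$) by expanding everything through the explicit formula \eqref{Xi}, the chain rule, the Heisenberg-type identity \eqref{ip} and the commutation \eqref{commell}; it then computes $\sul g$ directly in the same way, obtains \eqref{invlapinfty} from Lemma \ref{L:gradinv} applied with $F=|\nh f|^2$, and finally assembles \eqref{invplap} from the non-divergence form \eqref{plaplap}. You instead isolate the single structural fact that $\Phi(z,\sigma)=(Sz,\sigma)$ is a group automorphism fixing the center --- which, note, is exactly the content of the paper's own Proposition \ref{p:aut iso} with $S_2=I_{\Vtwo}$, proved there only later and never invoked in Section \ref{S:inv} --- and deduce the frame covariance $X_i(f\circ\Phi)=\sum_j S_{ji}\,(X_jf)\circ\Phi$, after which all three identities are orthogonal contractions via $\sum_i S_{ji}S_{ki}=\delta_{jk}$. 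Your approach buys conceptual clarity (the hypothesis \eqref{commell} is used exactly once, in the automorphism verification), shorter computations, and a treatment of $\Delta_{H,p}$ directly in divergence form, which is the form relevant for the weak solutions actually used in Theorems \ref{T:glps} and \ref{T:gsps}; the paper's computation buys self-containedness at the coordinate level and produces along the way the bilinear identity \eqref{commnab2} for two distinct functions, which it reuses verbatim for the $\infty$-Laplacian (your contraction argument yields the same identity with no extra work, so nothing is lost). One small point worth making explicit if you write this up: the covariance rule follows from $\Phi\circ L_x=L_{\Phi(x)}\circ\Phi$ together with $d\Phi_e(e_i)=Se_i$, and it can also be checked in one line directly from \eqref{Xi} using $\sa J(\ve_\ell)Sz,Se_i\da=\sa J(\ve_\ell)z,e_i\da$.
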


Proposition \ref{L:Hlapinv} plays a pervasive role in our next result, Theorem \ref{T:glps} below, which provides a partial solution to Problem 1. Before stating it, we need to introduce the following.

\begin{definition}\label{D:trans}
We say that a group of Heisenberg type $\bG$ has the \emph{Property (H)} if the subgroup $\mathscr S(\mathfrak h)\subset O(\mathfrak h)$ acts transitively on the unit sphere in $\mathfrak h$.
\end{definition}

It is clear that Property (H) is of a fundamental nature and, in fact, it has been deeply investigated by several people, see \cite{MS}, \cite{AS}, \cite{Ber} and \cite{Sim}. The most basic non-Abelian instance is when $\bG = \Hn$, the Heisenberg group. Identifying $\mathfrak h \cong \R^{2n} \cong \mathbb C^n$, and indicating with $Sp_n(\R)$ the symplectic group in $\R^{2n}$, one has
\begin{equation}\label{Hn}
\mathscr S(\mathfrak h) = O(2n)\cap Sp_n(\R) = U(n),
\end{equation}
the unitary group. It is well known that $U(n)$ acts transitively on the unit sphere in $\mathbb C^n$.
While the reader is referred to Section \ref{S:cyl} for a detailed discussion, here we confine ourselves to mention that not every group of Heisenberg type $\bG$ has the Property (H). For instance, the 15-dimensional octonionic Heisenberg group does not have this property. In fact, as the next result shows, this is the only exception among all Iwasawa groups. We recall that such groups can be described as the boundaries of the  real, complex, quaternionic or octonionic hyperbolic space, see \cite[Section 9.3]{Mo} and also \cite[Section 19]{Pansu89}.

\begin{proposition}\label{P:iwa}
Let $\bG$ be a group of Iwasawa type. Then, excluding the unique octonionic Heisenberg group of dimension 15, $\bG$ has the \emph{Property (H)}. This property fails for the octonionic Heisenberg group.
\end{proposition}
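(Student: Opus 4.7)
The plan is to handle separately the three families of Iwasawa-type groups of Heisenberg type, corresponding to the complex, quaternionic, and octonionic normed division algebras. In each case $\mathfrak h$ is identified with $\mathbb K^n$ for $\mathbb K\in\{\C,\Quat,\mathbb O\}$, the operators $J(\sigma)$ are realized as componentwise left multiplications by imaginary scalars of $\mathbb K$, and the task reduces to explicitly computing the centralizer of these left multiplications inside $O(\mathfrak h)$ and checking its transitivity on the unit sphere.

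For the complex Heisenberg group $\Hn$ the identification \eqref{Hn} gives $\mathscr S(\mathfrak h) = U(n)$, which acts transitively on $S^{2n-1}$ via the classical Hopf fibration $U(n)/U(n-1)\cong S^{2n-1}$. For the quaternionic Heisenberg group one takes $\mathfrak h\cong\Quat^n$ and realizes $J(\ve_\ell)$, $\ell=1,2,3$, as (componentwise) left multiplication by the imaginary units $i,j,k$. Because $\Quat$ is associative, an orthogonal endomorphism of $\Quat^n$ commuting with all three is precisely a quaternionic-linear isometry, so $\mathscr S(\mathfrak h) = Sp(n)$, the compact symplectic group. The standard transitivity $Sp(n)/Sp(n-1)\cong S^{4n-1}$ then settles Property (H).

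The essential case, and the main obstacle, is the octonionic one, where $\mathfrak h\cong\mathbb O$, $\mathfrak v\cong \operatorname{Im}\mathbb O$, and $J(\sigma)=L_\sigma$ for $\sigma\in\operatorname{Im}\mathbb O$; here non-associativity must be used. Given $S\in O(\mathbb O)$ commuting with every $L_\sigma$, set $\alpha:=S(1)$. For $\sigma\in\operatorname{Im}\mathbb O$,
\[
S(\sigma)\,=\,S(L_\sigma 1)\,=\,L_\sigma S(1)\,=\,\sigma\,\alpha,
\]
and $\R$-linearity propagates this to $S(a) = a\alpha = R_\alpha(a)$ for every $a\in\mathbb O$. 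The commutation relation $S(\sigma z)=\sigma S(z)$ then translates into
\[
(\sigma z)\alpha \,=\, \sigma(z\alpha)\qquad\text{for all}\ z\in\mathbb O,\ \sigma\in\operatorname{Im}\mathbb O,
\]
i.e.\ the associator $[\sigma,z,\alpha]$ vanishes identically. The key non-associative input is that the nucleus of $\mathbb O$ equals $\R$; hence $\alpha\in\R$, and $|\alpha|=|S(1)|=1$ forces $\alpha=\pm 1$ and $S=\pm I$. Thus $\mathscr S(\mathfrak h)=\{\pm I\}$, which manifestly does not act transitively on $S^7$.

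The hard part is thus the octonionic calculation; in the complex and quaternionic cases the desired identification with $U(n)$ and $Sp(n)$, and their classical transitive sphere actions, is standard. The qualitative picture the argument brings out is this: in the associative algebras $\C$ and $\Quat$ the centralizer of left multiplications is enlarged by a full copy of right multiplications, producing the transitive groups $U(n)$ and $Sp(n)$, whereas in $\mathbb O$ the collapse of the nucleus to $\R$ kills this second copy and leaves only $\pm I$. One could alternatively streamline the positive cases by invoking the Montgomery--Samelson/Borel classification of compact transitive actions on spheres, but the division-algebra argument above makes transparent why dimension $15$ is the unique exception.
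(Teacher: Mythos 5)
Your proof is correct. For the complex and quaternionic cases you follow essentially the same route as the paper: identify $\mathscr S(\mathfrak h)$ with $U(n)$, respectively $Sp(n)$, as the centralizer of the scalar multiplications $J(\sigma)$, and invoke the classical transitivity of these groups on spheres. (One small point of care: with the group law \eqref{e:H-type Iwasawa groups} used in the paper, the maps $J_\omega$ come out as \emph{right} multiplications by imaginary scalars, not left ones; your convention corresponds to the isomorphic model obtained by conjugation, so nothing breaks, but you should either verify that left multiplication realizes $J$ for your chosen bracket or note the isomorphism explicitly.) Where you genuinely diverge is the octonionic case. The paper identifies the group generated by the $J_\omega$, $|\omega|=1$, as $\operatorname{Spin}(7)\subset SO(8)$ and then quotes the fact (attributed to R.~Bryant) that the centralizer of $\operatorname{Spin}(7)$ in $SO(8)$ is $\{\pm I\}$. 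You instead compute the centralizer of the set $\{L_\sigma\mid \sigma\in\operatorname{Im}\mathbb O\}$ directly: any commuting orthogonal $S$ must be $R_\alpha$ with $\alpha=S(1)$, the commutation relation forces the associator $[\sigma,z,\alpha]$ to vanish for all $\sigma,z$, and since the associator of the alternative algebra $\mathbb O$ is trilinear and vanishes on real arguments this places $\alpha$ in the nucleus of $\mathbb O$, which is $\R$; orthogonality then gives $\alpha=\pm1$. This is more elementary and self-contained than the paper's argument --- it needs neither the identification of the generated group with $\operatorname{Spin}(7)$ nor an external centralizer computation --- at the modest cost of invoking the standard fact that the nucleus of $\mathbb O$ is $\R$. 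Both arguments of course yield the same conclusion $\mathscr S(\mathfrak h)=\{\pm I\}$, which is not transitive on $S^7$.
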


We will also show that there exist groups of Heisenberg type, which are not Iwasawa, that have the Property (H).
From the perspective of the present work the relevance of such property is connected to a basic symmetry result for solutions of the problem \eqref{i3}, \eqref{od}. We need to introduce the relevant definition.

\begin{definition}\label{D:psym}
Given a group of Heisenberg type $\bG$, we say that an open set $\Om\subset \bG$ has \emph{partial symmetry} if in the space $\R\times \R^{k}$, with coordinates $(\xi,\sigma)$, there exists an open set $\Om^\star$ such that $
(\xi,\sigma)\in \Om^\star\ \Longleftrightarrow\ (-\xi,\sigma)\in \Om^\star,
$
and
\begin{equation}\label{Ommani}
\Om = \{(z,\sigma)\in \bG\mid  (\frac{|z|^2}{4},\sigma)\in \Om_+^\star\}.
\end{equation}
\end{definition}
In \eqref{Ommani} we have denoted $\Om^\star_+ = \{(\xi,\sigma)\in \Om^\star\mid \xi\ge 0\}$. It is important to notice that the transformation
\[
(z,\sigma)\in \bG\ \longrightarrow\ (\frac{|z|^2}{4},\sigma)\in \R\times \R^{k},
\]
maps the gauge ball $B_R = \{(z,\sigma)\in \bG\mid |z|^4 + 16 |\sigma|^2 < R^4\}$ onto half of the Euclidean ball $B^\star_{+}(r) = \{(\xi,\sigma)\in \R\times \R^{k}\mid \xi^2 + |\sigma|^2 < r^2,\ \xi\ge 0\}$, with $r = \frac{R^2}{4}$.
We also stress that in Definition \ref{D:psym} no symmetry is requested in the vertical variable $\sigma\in \R^k$.

The next result shows that, when the group has the Property (H), the gauge balls are the unique bounded open sets with partial symmetry in which there exist a solution to \eqref{i3}, satisfying the overdetermined condition \eqref{od}. This provides a partial solution to Problem 1.

\medskip

\begin{theorem}\label{T:glps}
Suppose that $\bG$ have the \emph{Property (H)}, and let $\Om\subset \bG$ be a connected bounded open set having partial symmetry. Let $1<p<\infty$ and assume that $f$ solve \eqref{i3} and satisfy the overdetermined condition \eqref{od}. Then there exist $R>0$ and $\sigma_0\in \R^k$ such that $\Om$ is a ball of the Koranyi-Kaplan gauge \eqref{gauge} centred at $(0,\sigma_0)$ with radius $R$, i.e.
\[
\Om = B_R(0,\sigma_0) = \{(z,\sigma)\in \bG\mid |z|^4 + 16 |\sigma-\sigma_0|^2 < R^4\},
\]
and $f$ is given by left-translating \eqref{solball} by $(0,-\sigma_0)$.
\end{theorem}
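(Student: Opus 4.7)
The plan is to perform two successive reductions: first exploit Property (H) together with uniqueness for \eqref{i3} to kill the angular dependence in the horizontal variable $z$, then change variables $\xi=|z|^2/4$ to turn \eqref{i3}--\eqref{od} into an overdetermined weighted $p$-Laplacian problem in $\mathbb R^{k+1}$, and finally lift this to a cylindrically symmetric problem in $\mathbb R^{M+k}$ with $M=(m+p)/2$, to which the classical Serrin--Garofalo--Lewis rigidity theorem applies.

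\emph{Angular symmetrisation.} Given $S\in\mathscr S(\mathfrak h)$, set $g(z,\sigma)=f(Sz,\sigma)$. By Proposition \ref{L:Hlapinv} and $|Sz|=|z|$, the function $g$ solves the same equation $\Delta_{H,p}g=-|z|^p$. Partial symmetry of $\Omega$ gives $S\Omega=\Omega$, so $g=0$ on $\partial\Omega$ and the boundary condition \eqref{gradconv} is preserved. Uniqueness of nonnegative weak solutions to \eqref{i3} forces $g=f$, whence $f(Sz,\sigma)=f(z,\sigma)$ for every $S\in\mathscr S(\mathfrak h)$. Property (H), i.e.\ transitivity of $\mathscr S(\mathfrak h)$ on the unit sphere of $\mathfrak h$, then yields a function $F$ on $\Omega_+^\star$ with $f(z,\sigma)=F(|z|^2/4,\sigma)$.

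\emph{Reduction and lifting.} Writing $\xi=|z|^2/4$ and $\nabla_{\xi,\sigma}$ for the Euclidean gradient on $\mathbb R^{k+1}$, a direct computation using $\langle J(\varepsilon_\ell)z,z\rangle=0$ and the H-type identity \eqref{ip} (analogously to the derivation of \eqref{nhball}) gives
\[
|\nabla_H f|^2=\xi\,|\nabla_{\xi,\sigma}F|^2,\qquad \Delta_{H,p}f=\xi^{1-m/2}\,\operatorname{div}_{\xi,\sigma}\!\bigl(\xi^{\alpha}\,|\nabla_{\xi,\sigma}F|^{p-2}\nabla_{\xi,\sigma}F\bigr),
\]
with $\alpha=(m+p-2)/2$. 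Since $|z|^p=2^p\xi^{p/2}$, the problem \eqref{i3}--\eqref{od} becomes, on $\Omega_+^\star$,
\[
\operatorname{div}_{\xi,\sigma}\!\bigl(\xi^{\alpha}|\nabla F|^{p-2}\nabla F\bigr)=-2^p\xi^{\alpha},\qquad F=0\ \text{on }\partial\Omega_+^\star,\qquad |\nabla F|=2c\ \text{on }\partial\Omega_+^\star\setminus\{\xi=0\}.
\]
Now set $M=\alpha+1=(m+p)/2$, identify $\xi$ with $|w|$ for $w\in\mathbb R^M$, and put $\Phi(w,\sigma)=F(|w|,\sigma)$ on the cylindrical lift $\hat\Omega=\{(w,\sigma)\in\mathbb R^{M+k}:(|w|,\sigma)\in\Omega_+^\star\}$. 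The standard radial-in-$w$ identity $\Delta_p\Phi=|w|^{1-M}\operatorname{div}(|w|^{M-1}|\nabla\Phi|^{p-2}\nabla\Phi)$ together with the previous display delivers $\Delta_p\Phi=-2^p$ in $\hat\Omega$ with $\Phi=0$ and $|\nabla\Phi|=2c$ on $\partial\hat\Omega$. The Serrin--Garofalo--Lewis rigidity theorem for the Euclidean $p$-Laplacian then forces $\hat\Omega$ to be a Euclidean ball; cylindrical symmetry in $w$ pins its center at some $(0,\sigma_0)$, and unwinding $|w|=|z|^2/4$ gives $\Omega=\{(z,\sigma)\in\bG:|z|^4+16|\sigma-\sigma_0|^2<R^4\}=B_R(0,\sigma_0)$. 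The explicit form of $f$ is then forced by Proposition \ref{P:gauge}.

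The main obstacle is that the lifting to $\mathbb R^{M+k}$ is literal only when $M=(m+p)/2$ is a positive integer: since $m$ is even in every H-type group (because \eqref{J2} makes $J$ a complex structure on $\mathfrak h$), this is automatic when $p$ is an even integer, but for general $p\in(1,\infty)$ one must instead invoke a Serrin-type rigidity theorem directly for the weighted $p$-Laplacian $\operatorname{div}(\xi^{\alpha}|\nabla F|^{p-2}\nabla F)$ on $\Omega_+^\star\subset\mathbb R^{k+1}$, with the natural reflective symmetry across $\{\xi=0\}$ encoding the characteristic set $\{z=0\}$ of $\partial\Omega$ and with \eqref{od} interpreted in the measure-theoretic sense \eqref{gradconv}. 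Transferring the overdetermined condition through the lift, and ensuring sufficient boundary regularity of $\Phi$ at the axis $\{w=0\}$ to apply the Serrin-type theorem in its classical formulation, is the other technically delicate point.
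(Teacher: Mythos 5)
Your proposal follows essentially the same route as the paper's proof: angular symmetrisation of $f$ via Property~(H), the invariance lemmas and uniqueness for \eqref{i3}, reduction to a function of $(\xi,\sigma)$ with $\xi=|z|^2/4$, cylindrical lifting to $\R^{\frac{m+p}{2}}\times\R^k$ via Proposition \ref{P:pcylG}, and an appeal to the Garofalo--Lewis theorem (Theorem D) after rescaling by $2^{-\frac{p}{p-1}}$. The ``fractal dimension'' obstacle you flag at the end is a genuine observation, but it is precisely the point the paper itself acknowledges and dismisses: the authors note that $n=\frac{m+p}{2}+k=\frac{Q+p}{2}$ ``might be fractal'' yet is ``inconsequential'', because in the cylindrical reduction the dimension enters only through the Bessel coefficient $\frac{m+p-2}{2\xi}$ in \eqref{pcylDel} and through the constant $n^{\frac{1}{p-1}}$ in the conclusion \eqref{fgl} of Theorem D, so your proposed fallback --- reading Theorem D as a Serrin-type statement for the weighted operator $\operatorname{div}(\xi^{\alpha}|\nabla F|^{p-2}\nabla F)$ on $\R^{k+1}$ --- is exactly the intended interpretation rather than a missing ingredient.
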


\medskip

The proof of Theorem \ref{T:glps} will be given in Section \ref{S:cyl} below.  At the core of our argument there is Proposition \ref{P:pcylG} that, via the geometric invariances in Lemmas \ref{L:Hlapinv} and \ref{L:gradinv}, allows to connect the sub-Riemannian problem \eqref{i3}, \eqref{od}, to a (Euclidean) 1989 theorem for the classical
$p$-Laplacian of J. Lewis and the first named author, see Theorem D in Section \ref{S:ops} below.

The next result, whose proof will be given in Section \ref{S:cyl} below, shows that  besides the case of the gauge ball discussed above, when the group has the Property (H) the answer to Problem 2 is positive when the domain $\Om$ has partial symmetry.

\medskip

\begin{proposition}\label{P:iips}
Assume that $\bG$ have the \emph{Property (H)}. Let $\Om \subset \bG$ be a smooth domain with partial symmetry, and $f$ be a solution to \eqref{i3}, \eqref{od}. Then the function $P(x)$ defined in \eqref{Pomega} satisfies the pointwise identity $P(x) = c^p |z|^p$, $x\in \Om$.
\end{proposition}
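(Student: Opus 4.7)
The plan is to deduce Proposition~\ref{P:iips} as a direct consequence of Theorem~\ref{T:glps} together with the explicit computation \eqref{nhball}--\eqref{Pballcon} that was already carried out in the Introduction for the gauge ball centered at $e\in\bG$.

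\smallskip

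First I would invoke Theorem~\ref{T:glps}, whose hypotheses coincide exactly with those of Proposition~\ref{P:iips}: Property~(H), partial symmetry of $\Om$, and a solution $f$ of \eqref{i3} meeting the overdetermined condition \eqref{od}. Theorem~\ref{T:glps} forces $\Om$ to be a Kor\'anyi-Kaplan ball $B_R(0,\sigma_0)$ and $f$ to be the left-translate by $(0,-\sigma_0)$ of the function in \eqref{solball}. So the pointwise identity \eqref{Pballcon} that we wish to prove is already known at the reference ball $B_R$ centered at the identity, and we only need to transfer that identity to $B_R(0,\sigma_0)$.

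\smallskip

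The second step is to observe that the quantity $P$ defined in \eqref{Pomega} is invariant under left-translations along the center $\exp\mathfrak v$. Indeed, the group law \eqref{grouplaw} gives, for any $\sigma_0\in\mathfrak v$,
\[
(0,\sigma_0)\circ(z,\sigma) \;=\; \bigl(z,\, \sigma+\sigma_0\bigr),
\]
because the bracket term $\tfrac12\sum_\ell\langle J(\ve_\ell)\cdot 0,z\rangle\ve_\ell$ vanishes. Hence a translation along $\mathfrak v$ does not alter the variable $z$, so both the weight $|z|^p$ and the argument $h(x)=|z|$ of the boundary condition \eqref{gradconv} are preserved. Moreover, the horizontal vector fields $X_1,\dots,X_m$ are left-invariant by construction \eqref{vfields}, so $|\nh f|$ and $f$ transform covariantly under left-translation. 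Consequently, if $\tilde f(z,\sigma)=f(z,\sigma+\sigma_0)$ denotes the pull-back of $f$ to the ball $B_R$ centered at $e$, then
\[
P(x) \;=\; |\nh f(x)|^p + \tfrac{2p}{(p-1)(Q+p)}\,f(x)\,|z|^p
\;=\;|\nh \tilde f(\tilde x)|^p + \tfrac{2p}{(p-1)(Q+p)}\,\tilde f(\tilde x)\,|z|^p,
\]
where $\tilde x=(0,-\sigma_0)\circ x$. This reduces the claim to the ball $B_R$ centered at the identity.

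\smallskip

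Finally, for the ball $B_R$ centered at $e$, Proposition~\ref{P:gauge} gives the explicit formula \eqref{solball} for $\tilde f$, and the identity \eqref{Pballcon} was already verified in the Introduction using \eqref{nhball}, \eqref{iiH}, together with the value \eqref{cball} of the constant $c$. Combining this with Step~2 yields $P(x)=c^p|z|^p$ pointwise in $\Om=B_R(0,\sigma_0)$, as claimed.

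\smallskip

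I do not foresee a genuine obstacle: the entire content of the proposition is \emph{subsumed} by Theorem~\ref{T:glps}, which identifies $(\Om,f)$ completely, plus the elementary invariance of the relevant quantities under central translations. The only subtlety to be explicit about is precisely this invariance, i.e.\ verifying that both $|z|$ and $\nh f$ are unchanged by left-translation along $\exp\mathfrak v$; this is immediate from \eqref{grouplaw} and \eqref{vfields}, but it is the conceptual point that makes the pointwise identity propagate from $B_R$ to every translated ball $B_R(0,\sigma_0)$.
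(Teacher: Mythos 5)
Your argument is correct: Theorem \ref{T:glps} is proved independently of Proposition \ref{P:iips} (it rests on Proposition \ref{P:pcylG}, Lemmas \ref{L:Hlapinv} and \ref{L:gradinv}, and Theorem D), so there is no circularity in invoking its conclusion, and once $\Om=B_R(0,\sigma_0)$ and $f$ are identified explicitly, the pointwise identity follows from the computation \eqref{nhball}--\eqref{Pballcon} together with the (correctly noted) facts that central left-translations fix $z$ and that $\nh$ is left-invariant. However, your route is genuinely different from the paper's. The paper does not use the explicit formula for $f$ at all in its proof of Proposition \ref{P:iips}: it returns to the Euclidean reduction from the proof of Theorem \ref{T:glps} and invokes an \emph{intermediate} step of the Garofalo--Lewis argument in \cite{GL}, namely that the Euclidean $P$-function $P^{\star\star}$ in \eqref{gl} satisfies the pointwise constraint $P^{\star\star}\equiv a^p$ in $\Om^{\star\star}$; it then transports this constancy back through the change of variables $\xi=|z|^2/4$ using \eqref{gradG}, which directly produces $P(x)=c^p|z|^p$. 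What your approach buys is self-containment: you only need the \emph{statements} of Theorem \ref{T:glps} and Proposition \ref{P:gauge}, not the internal structure of the proof in \cite{GL}. What the paper's approach buys is the conceptual point that the sub-Riemannian $P$-function \eqref{Pomega} is the exact pull-back of the classical Weinberger-type $P$-function of \cite{GL}, which is the structural reason behind Problem 2 and explains where the coefficient $\tfrac{2p}{(p-1)(Q+p)}$ comes from, rather than verifying it a posteriori on the explicit solution.
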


A second overdetermined problem which we propose is the following. We recall that given an open set $\Om\subset \bG$, and $\varnothing \not= K\subset \Om$ a compact set, the couple $(K,\Om)$ is called a condenser. One says that the condenser is \emph{ringlike} if both $K$ and $\bG\setminus \Om$ are  connected. In such case $\Om\setminus K$ is called a \emph{ring}. For a given condenser $(K,\Om)$, we let
\[
\mathscr F(K,\Om) = \left\{\vf\in C^\infty_0(\Om)\mid \vf \geq 1 \ \text{on}\ K\right\}.
\]
\begin{definition}\label{D:capacity}
For $1\leq p < \infty$ the $p$-\emph{capacity} of the condenser $(K,\Om)$ is defined as follows
\[
\operatorname{cap}_p (K,\Om) = \inf_{\vf\in \mathscr F(K,\Om)} \int_\Om |\nh \vf|^p dx.
\]
When $\Om = \bG$, then we simply write $cap_{H,p} K$, instead of $\operatorname{cap}_{H,p}(K,\bG)$.
\end{definition}

If we let
\[
\mathscr P(K,\Om) = \left\{\vf\in C^\infty_0(\Om)\mid \vf \equiv 1 \ \text{in a neighborhood of}\ K, 0 \leq \vf \leq 1\right\},
\]
then, similarly to the classical case, see \cite{M} p. 100, one recognises that
\begin{equation}\label{eqdef}
\operatorname{cap}_{H,p}(K,\Om) = \inf_{\vf\in \mathscr P(K,\Om)} \int_\Om |\nh \vf|^p dx.
\end{equation}
One can replace the compact set $K$ with any bounded open set $\omega\subset \bG$, see \cite{D}. When $\Om = \bG$, we write $\operatorname{cap}_{H,p} \omega = \operatorname{cap}_{H,p}(\omega,\bG)$, and  call this number the horizontal $p$-capacity of $\omega$.

Given $1<p<Q$, the $p$-capacitary problem consists in finding a $p$-harmonic function  $f$ in $\bG\setminus \overline \Om$, such that $f = 1$ on $\p \Om$, and $f=0$ at infinity, i.e.,
\begin{equation}\label{icap}
\begin{cases}
\Delta_{H,p} f = 0\ \ \ \ \ \ \ \ \ \ \text{in}\ \bG\setminus \overline{\Om},
\\
f = 1\ \ \ \ \ \ \ \ \ \ \ \ \ \ \ \ \text{on}\ \p \Om,
\\
f = 0\ \ \ \ \ \ \ \ \ \ \ \ \ \ \ \ \ \text{at}\ \infty.
\end{cases}
\end{equation}
As it is customary, the boundary condition $f = 1$ on $\p \Om$ is interpreted by assuming that $f-1$ is zero on $\p \Om$ in the weak sense. Using variational tools, in the preprint \cite{DG} the existence of a unique (weak) solution $f$ to the problem \eqref{icap} was established in any Carnot group. Recently, this construction has been generalised to metric measure spaces by Bonk, Capogna and Zhou in \cite{BCZ}.

Given $f$ as in \eqref{icap}, the horizontal $p$-capacity of $\Om$ is then given by
\begin{equation}\label{pcap}
\operatorname{cap}_{H,p}(\Om) = \int_{\bG\setminus \overline{\Om}} |\nh f|^p dx.
\end{equation}
For such reason, $f$ is called the $p$-\emph{capacitary potential} of $\Om$.
We pose the following.

\medskip

\noindent \textbf{Problem 3:} \emph{Let $1<p<Q$ and $\Om\subset \bG$ be a connected open set. Let $f$ be a solution to \eqref{icap} and suppose in addition that  there exist $c> 0$ such that, in the sense specified in \eqref{gradconv},
\begin{equation}\label{odd}
|\nh f|_{\big|\p\Om} = c |z|.
\end{equation}
Is it true that (up to a left-translation along the center) $\Om$ is a ball centred at $e$ of the Koranyi-Kaplan gauge \eqref{gauge}?}

\medskip

We again analyse the conjectured optimal geometric configuration for Problem 3. When $\Om = B_R$, a gauge ball centred at $e\in \bG$, there exists an explicit solution to the $p$-capacitary problem. Using \eqref{fs} it is clear that when $\Om = B_R$ one has the following explicit solution of the capacitary problem \eqref{icap}
\begin{equation}\label{sol}
f(x) = \left(\frac{R}{N(x)}\right)^{\frac{Q-p}{p-1}}.
\end{equation}

\medskip

Similarly to Theorem \ref{T:glps} above, the next results shows that Problem 3 does have a positive answer when the inner domain has partial symmetry.

\begin{theorem}\label{T:gsps}
Suppose that $\bG$ have the \emph{Property (H)}. Assume that $\Om\subset \bG$ be a connected bounded open set having partial symmetry. Suppose $1<p<Q$ and that $f$ be a weak solution to \eqref{icap} satisfying the overdetermined condition \eqref{gradconv}. Then there exist $R>0$ and $\sigma_0\in \R^k$ such that $\Om$ is a ball of the Koranyi-Kaplan gauge \eqref{gauge} centred at $(0,\sigma_0)$ with radius $R$,
\[
\Om = B_R(0,\sigma_0) = \{(z,\sigma)\in \bG\mid |z|^4 + 16 |\sigma-\sigma_0|^2 < R^4\},
\]
and $f$ is given by left-translating \eqref{sol} by $(0,-\sigma_0)$.
\end{theorem}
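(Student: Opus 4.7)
The plan is to mirror the strategy used for Theorem \ref{T:glps}, substituting the exterior capacitary problem \eqref{icap} for the interior Dirichlet problem \eqref{i3} and invoking the Euclidean overdetermined rigidity theorem for the classical $p$-capacitary potential in place of the Garofalo--Lewis interior result.

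First I would symmetrize $f$ horizontally. For any $S\in\mathscr S(\mathfrak h)$, set $f_S(z,\sigma)=f(Sz,\sigma)$. Since $|Sz|=|z|$ and $\Om=\{(z,\sigma):(|z|^2/4,\sigma)\in\Om^\star_+\}$, the map $(z,\sigma)\mapsto(Sz,\sigma)$ preserves both $\Om$ and $\bG\setminus\overline\Om$. Proposition \ref{L:Hlapinv} then shows that $f_S$ solves $\Delta_{H,p}f_S=0$ on $\bG\setminus\overline\Om$ with the same boundary value on $\p\Om$ and the same decay at infinity, so uniqueness of the $p$-capacitary potential forces $f_S=f$. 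Property (H) gives transitivity of $\mathscr S(\mathfrak h)$ on the unit sphere of $\mathfrak h$, and we obtain $f(z,\sigma)=g(|z|^2/4,\sigma)$ for some function $g$ on the exterior of $\Om^\star_+$ in the half-space $\{\xi\ge 0\}\subset\R\times\R^k$.

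Next I would carry out the analogue of Proposition \ref{P:pcylG} in the capacitary setting. A direct computation using \eqref{grouplaw} and \eqref{ip} yields $|\nh f|^2=\xi|\nabla g|^2$ and, for any cylindrical test function $\vf$, $\langle\nh f,\nh\vf\rangle=\xi\,\nabla g\cdot\nabla\vf$. Substituting into the weak form of \eqref{icap} produces the weighted Euclidean $p$-Laplace equation
\[
\operatorname{div}\left(\xi^{(m+p-2)/2}|\nabla g|^{p-2}\nabla g\right)=0
\]
on the exterior of $\Om^\star_+$ in $\{\xi\ge 0\}$, together with the data $g=1$ on $\p\Om^\star_+\cap\{\xi>0\}$, $g\to 0$ at infinity, and the overdetermined boundary condition $|\nabla g|=2c$, the latter being the translation of $|\nh f|=c|z|$ via the identity $|\nh f|=(|z|/2)|\nabla g|$. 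Partial symmetry of $\Om^\star$ under $\xi\mapsto-\xi$ allows even reflection of $g$ across $\{\xi=0\}$, producing a weak solution on $(\R\times\R^k)\setminus\overline{\Om^\star}$ of the same weighted equation with the overdetermined condition holding on all of $\p\Om^\star$.

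I would then invoke the exterior analogue of Theorem D, i.e.\ the overdetermined rigidity theorem for $p$-capacitary potentials in the axially symmetric weighted form just described, to conclude that $\Om^\star$ is a Euclidean ball, necessarily centred on $\{\xi=0\}$ by the reflection symmetry. Writing $\Om^\star=B^\star((0,\sigma_0),r)$ and unwinding $\xi=|z|^2/4$ with $R=2\sqrt r$ gives $\Om=B_R(0,\sigma_0)$, after which uniqueness identifies $f$ with the left-translate of \eqref{sol} by $(0,-\sigma_0)$. The principal obstacle is exactly this final step: one needs an exterior overdetermined rigidity result for the \emph{weighted} $p$-Laplacian with radial weight $\xi^{(m+p-2)/2}$. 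When $(m+p)/2\in\N$ this reduces, via a radial lift, to the ordinary $p$-Laplacian on a higher-dimensional Euclidean space and the classical Serrin-type capacitary results apply; for general $p$ one must either appeal to axially symmetric variants of the moving-planes/$P$-function machinery or push the Garofalo--Lewis argument through directly in the weighted setting. A secondary technical point is establishing enough regularity of $g$ across $\{\xi=0\}$ so that the even reflection across the image of the characteristic set of $\Om$ produces a bona fide weak solution.
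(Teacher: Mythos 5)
Your proposal follows the same overall strategy as the paper: the symmetrization step via $\mathscr S(\mathfrak h)$, uniqueness of the capacitary potential, and Property (H) to reduce $f$ to the form \eqref{fgG} is exactly the paper's first step, and the translation of the overdetermined condition into $|\nabla g|=2c$ is correct. The divergence is in how the resulting one-plus-$k$-dimensional problem is closed. You arrive at the weighted exterior equation $\operatorname{div}(\xi^{(m+p-2)/2}|\nabla g|^{p-2}\nabla g)=0$ in the half-space, reflect evenly across $\{\xi=0\}$, and then flag as the ``principal obstacle'' the need for an exterior rigidity theorem for this weighted operator. The paper resolves precisely this point by the cylindrical lift that you mention only as a special case: by Proposition \ref{P:pcylG} and Remark \ref{R:plap}, the operator appearing in \eqref{pisub2} is \emph{identically} the radial form of the standard $p$-Laplacian acting on $G(w,\sigma)=g(|w|,\sigma)$ with $w\in\R^{\frac{m+p}{2}}$, so $G$ solves the ordinary $p$-capacitary overdetermined problem \eqref{garsar} in $\R^n$ with $n=\frac{m+p}{2}+k=\frac{Q+p}{2}$, and Theorem E (from \cite{GS}, with the starlikeness hypothesis removed in \cite{Po}) applies directly once one checks that $p<n$ is equivalent to $p<Q$ --- a check the paper makes explicitly and which you should include, since it is what reconciles the hypothesis $1<p<Q$ with the Euclidean restriction $1<p<n$. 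The caveat you raise about $\frac{m+p}{2}$ failing to be an integer when $p\neq 2$ is real but is shared by the paper, which treats $n$ as a possibly fractal dimensional parameter and asserts this is inconsequential because the proofs of Theorems D and E only use $n$ through the radial/axially symmetric form of the operator; so your ``weighted'' formulation and the paper's ``fractal dimension'' formulation are two names for the same object, and neither requires the moving-planes machinery you suggest as an alternative. In short, your argument is correct in outline, and the gap you identify is closed in the paper by Proposition \ref{P:pcylG} together with Theorem E rather than by a new weighted rigidity theorem.
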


\medskip

Theorem \ref{T:gsps} will be proved in Section \ref{S:cyl} below. Similarly to Theorem \ref{T:glps}, it will be deduced from Proposition \ref{P:pcylG}, Lemmas \ref{L:gradinv} and \ref{L:Hlapinv}, and from the Euclidean Theorem E, which we have recalled in the appendix in Section \ref{S:a} below. The reader might be left with wondering about a potential discrepancy between the hypothesis $1<p<Q$ in the statement of Theorem \ref{T:gsps} versus the assumption $1<p<n$ in Theorem E. As the reader will see, there is no such discrepancy: the reason for this is that what will play the role of the dimension $n$ in Theorem E is the possibly fractal dimension $n = \frac{m+p}2 + k = \frac{Q+p}2$. Therefore, the restriction $p<  \frac{Q+p}2 = n$, that is needed to implement Theorem E, is in fact equivalent to the condition $p<Q$ in Theorem \ref{T:gsps}!

Some final comments are in order. With $f$ given by \eqref{sol}, consider  the function in $\bG\setminus\overline B_R$ defined by the equation
\begin{equation}\label{Pext}
P(x) = P(f;x) = \frac{|\nh f(x)|^p}{f(x)^{\frac{p(Q+p-2)}{Q-p}}}
\end{equation}
(the function $P$ is well defined since $f>0$). A direct computation allows to recognise that such function satisfies in $\bG\setminus\overline B_R$ the pointwise identity
\begin{equation}\label{Pcapball}
P(x) = c^p |z|^p,
\end{equation}
with $c = \frac{Q-p}{p-1} \frac{1}{R^2}$. Let us now assume that $\Om\subset \bG$ be a connected bounded open set, and suppose that $f$ be its capacitary potential, i.e., that $f$ solves \eqref{icap}. For such $f$, consider $P$ defined by \eqref{Pext} (this function is well defined since $f>0$ by the Harnack inequality in \cite{CDG}). One important property of $P$ is the following scale invariance: let $f_\la(x) = \la^{\frac{Q-p}{p-1}} f(\delta_\la x)$, where $\delta_\la$ denotes the non-isotropic group dilations. Then the reader can verify that
\[
P(f_\la;x) = P(f;\delta_\la x).
\]

\medskip

We pose the following.

\medskip

\noindent \textbf{Problem 4:} \emph{Does the validity of the equation \eqref{Pcapball} in $\bG\setminus \Om$ characterise a gauge ball?}

\medskip

Concerning Problem 4 we mention that, using the work in the cited paper \cite{GS}, we have a positive answer in domains with partial symmetry. We mention that, analogously to what was proved in the Euclidean setting in \cite{GS}, the validity of \eqref{Pcapball} is connected to the following information on the horizontal mean curvature of $\p \Om$  outside the characteristic portion of $\p \Om$
 \begin{equation}\label{Hgaugeball}
\mathscr H =  \frac{|z|}{(Q+p) c^{p-1}}.
\end{equation}
Whether in a group of Heisenberg type $\bG$ the equation \eqref{Hgaugeball}  does characterise a gauge ball centred at $e$ presently remains a challenging open question. In Proposition \ref{P:bubbleps} we show that, if $\Om$ is a $C^2$, connected, bounded domain with partial symmetry in a group of Heisenberg type $\bG$, the assumption
\begin{equation}\label{alphaH}
\mathscr H = \alpha |z|,
\end{equation}
 with $\alpha>0$, does imply that $\Om$ must be a gauge ball $B_R(0,\sigma_0)$. With a completely different approach from us, in their interesting recent paper \cite{GMT}, Guidi, Martino and Tralli have proved that for a smooth domain $\Om\subset \mathbb H^1$ with two isolated characteristic points on the vertical axis, the hypothesis \eqref{alphaH} does characterise the Koranyi ball. As a corollary of their umbilical Theorem 1.2 they have also shown that, for the higher  Heisenberg groups $\Hn$, this continues to be true if one makes the a priori assumption of partial symmetry on the ground domain.

\medskip

\textbf{Acknowledgement.} We thank Professor Robert Bryant for kindly providing the computation of the centralizer of  Spin(7) $\subset$ SO(8) used in the proof of Proposition \ref{P:iwa}.

\vskip 0.2in


\section{Some preparatory results}\label{S:prelim}

In this section we collect some background material that will be needed in the rest of the paper. We also prove Proposition \ref{P:gauge}, together with some useful new results. A simply connected real Lie group $\bG$ is called a Carnot group of step two if its  Lie algebra is stratified and two-nilpotent, i.e., $\bg =
\mathfrak h\oplus \mathfrak v$, where $[\mathfrak h,\mathfrak h] = \mathfrak v, [\mathfrak h,\mathfrak v] = \{0\}$ (we refer to the seminal paper \cite{Fo2} for an introduction to Carnot groups in general, see also \cite[Chapter 2]{Gparis}). We equip $\bg$ with an inner product with respect to which $\{e_1,...,e_m\}$ and $\{\ve_1,...,\ve_k\}$ denote an orthonormal basis of $\mathfrak h$ and  $\mathfrak v$, respectively.
Consider the analytic mappings $z:\bG\to \mathfrak h$, $\sigma:\bG\to \mathfrak v$ uniquely defined through the equation $x = \exp(z(x)+\sigma(x))$. For each $i=1,...,m$ we set
\[
z_i = z_i(x) = \sa z(x),e_i\da,
\]
whereas for $s=1,...,k$ we let
\[
\sigma_s = \sigma_s(x) = \sa\sigma(x),\ve_s\da.
\]
We will indicate with $(z,\sigma)\in \bg$ the logarithmic coordinates of a point $x\in \bG$.
Given $z, \sigma\in \bg$, the Baker-Campbell-Hausdorff formula presently reads
\begin{equation}\label{BCH}
\exp(z) \circ \exp(\sigma) = \exp{\big(z + \sigma + \frac{1}{2}
[z,\sigma]\big)},
\end{equation}
see \cite[Sec. 2.15]{V}. Notice that \eqref{BCH} assignes a group law $\circ$ in $\bG$, which is noncommutative. The multiplication $x \circ y$ in $\bG$ is obtained from \eqref{BCH} by the algebraic commutation relations between the elements of its Lie algebra, see \eqref{grouplaw} above.

The group $\bG$ is naturally equipped with a one-parameter family of automorphisms $\{\delta_\lambda\}_{\lambda>0}$  which are called the group dilations. One first defines a family of non-isotropic dilations $\Delta_\lambda :\bg \to \bg$ in the Lie algebra by assigning the formal degree $j$ to the $j$-th layer $V_j$ in the stratification of $\bg$. This means that if $z\in \mathfrak h$, $\sigma\in \mathfrak v$ one lets
\begin{equation}\label{dilg}
\Delta_\lambda(z+\sigma) = \lambda z + \lambda^2 \sigma.
\end{equation}
One then uses the exponential mapping to define a one-parameter family of group automorphisms $\delta_\la :\bG \to \bG$ by the equation
\begin{equation}\label{dilG}
\delta_\lambda(x) = \exp \circ \Delta_\lambda \circ \exp^{-1}(x),\quad\quad\quad x\in \bG.
\end{equation}
We assume henceforth that $\bG$ is endowed with a left-invariant Riemannian
tensor with respect to which the vector fields $X_1,\ldots,X_m$,
$T_1,\ldots,T_k$ defined in \eqref{vfields} are orthonormal at every point. If $f:\bG\to \R$ is a smooth function, its horizontal gradient is given by
\begin{equation}\label{sh}
\nh f = \sum_{i=1}^m X_i f\ X_i.
\end{equation}
This is the projection of the Riemannian gradient of $f$,
\begin{equation}\label{rg}
\nabla f = \sum_{i=1}^m X_i f\ X_i + \sum_{\ell = 1}^k T_\ell f\ T_\ell,
\end{equation}
onto the horizontal bundle $H = \exp \mathfrak h$.
In the logarithmic coordinates $(z,\sigma)$ one has
\begin{equation}\label{Xi}
X_i  = \p_{z_i} + \frac 12 \sum_{\ell=1}^k \sa J(\ve_\ell)z,e_i\da \partial_{\sigma_\ell}.
\end{equation}
We note that when $f:\bG\to \R$ is a smooth function, one has the following elementary consequence of \eqref{Xi}
\begin{equation}\label{grad}
\sa \nh f(x),z\da = \sa \nabla_z f(x),z\da.
\end{equation}
We have in fact
\begin{align*}
\sa \nh f(x),z\da & = \sum_{i=1}^m X_i f(x) z_i = \sum_{i=1}^m\p_{z_i}f(x) z_i + \frac 12 \sum_{\ell=1}^k \sum_{i=1}^m \sa J(\ve_\ell)z,e_i\da \sa z,e_i\da \partial_{\sigma_\ell} f(x)
\\
& = \sa \nabla_z f(x),z\da + \frac 12 \sum_{\ell=1}^k \sa J(\ve_\ell)z,z\da \partial_{\sigma_\ell} f(x) = \sa \nabla_z f(x),z\da.
\end{align*}

The horizontal Laplacian with respect to the basis $\{e_1,...,e_m\}$ is given by \eqref{sulla}, and when $\bG$ is of Heisenberg type such differential operator takes the special expression \eqref{sullaH}.
The following result will be useful, see \cite[formulas (2.7) \& (2.10)]{CDGcap}.

\begin{lemma}\label{L:Xec}
Let $\bG$ be a group of Heisenberg type. Then
\begin{equation}\label{iiH}
|\nh N| = \frac{|z|}{N},\ \ \ \ \ \ \ \ \ \ \ \ \ \ \ \sul N = \frac{Q-1}{N} |\nh N|^2.
\end{equation}

\end{lemma}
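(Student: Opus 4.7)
The plan is to prove both identities by direct computation, with the second identity admitting a particularly slick route via Theorem A once the first is in hand.

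For the first identity, I would start from $N^4=|z|^4+16|\sigma|^2$ and compute $X_i N$ by applying \eqref{Xi}. Using $X_i(|z|^4)=4|z|^2 z_i$ and $X_i(|\sigma|^2)=\sum_\ell\sigma_\ell\langle J(\varepsilon_\ell)z,e_i\rangle=\langle J(\sigma)z,e_i\rangle$ (setting $J(\sigma):=\sum_\ell\sigma_\ell J(\varepsilon_\ell)$), the chain rule gives
\[
X_i N\;=\;\frac{|z|^2 z_i+4\langle J(\sigma)z,e_i\rangle}{N^3}.
\]
Squaring and summing over $i=1,\dots,m$ yields
\[
N^6\,|\nh N|^2\;=\;|z|^6\,+\,8|z|^2\langle z,J(\sigma)z\rangle\,+\,16\,|J(\sigma)z|^2.
\]
The middle term vanishes by antisymmetry of $J(\sigma)$ (which follows from \eqref{kap}), and the last term equals $16|\sigma|^2|z|^2$ by the Heisenberg-type identity \eqref{ort}. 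Hence $N^6|\nh N|^2=|z|^2(|z|^4+16|\sigma|^2)=|z|^2 N^4$, giving $|\nh N|=|z|/N$.

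For the second identity, the cleanest route is to leverage Theorem A: the function $u(x)=N(x)^{-(Q-2)}$ is $\sul$-harmonic on $\bG\setminus\{e\}$. Applying the chain rule,
\[
\sul u\;=\;-(Q-2)\,N^{-(Q-1)}\,\sul N\,+\,(Q-2)(Q-1)\,N^{-Q}\,|\nh N|^2.
\]
Setting this equal to zero and solving for $\sul N$ yields the second identity immediately. If a self-contained derivation is preferred, one can instead use $\sul N=\frac{1}{4N^3}\sul(N^4)-\frac{3}{N}|\nh N|^2$ and compute $\sul(N^4)$ from \eqref{sullaH}: $\Delta_z(|z|^4)=(4m+8)|z|^2$, $\Delta_\sigma(|\sigma|^2)=2k$, and $\Theta_\ell(|z|^4)=4|z|^2\langle J(\varepsilon_\ell)z,z\rangle=0$ by antisymmetry. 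Collecting terms gives $\sul(N^4)=4(Q+2)|z|^2$, after which the first identity $|\nh N|^2=|z|^2/N^2$ converts everything to the desired $\frac{Q-1}{N}|\nh N|^2$.

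There is no real obstacle here — the whole argument hinges on the single fact that $J(\sigma)$ is skew-symmetric and orthogonal of norm $|\sigma|$, which is built into the Heisenberg-type hypothesis \eqref{J2}. The only thing to be careful about is bookkeeping the constants: the factor $16$ in the definition of $N$ and the factor $1/2$ in \eqref{Xi} must be tracked consistently, which is precisely what makes the coefficients $|z|^2$ (rather than a different power) appear on the right-hand side of both identities.
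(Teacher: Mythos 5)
Your computations are correct. Note that the paper does not actually prove this lemma: it cites formulas (2.7) and (2.10) of \cite{CDGcap}, and the direct verification you give (compute $X_i(N^4)$ via \eqref{Xi}, kill the cross term by antisymmetry of $J(\sigma)$, use \eqref{ort} for the square term; then $\sul(N^4)=4(Q+2)|z|^2$ for the second identity) is essentially the standard derivation carried out there, with all constants tracked correctly. The only caution concerns your first route to the second identity: invoking Theorem A is mildly circular in spirit, since the Folland--Kaplan proof that $N^{-(Q-2)}$ is harmonic is itself obtained from precisely the two identities of this lemma; your self-contained alternative via $\sul N=\frac{1}{4N^3}\sul(N^4)-\frac{3}{N}|\nh N|^2$ avoids this and is the one to keep.
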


Consider now for  functions $u:\bG\to \R$ and $F:\R\to \R$, the composition  $f=F\circ u$. An elementary computation based on the chain rule gives
\begin{equation}\label{spilcomp}
\Delta_{H,p} f = F'(u) |F'(u)|^{p-2} \Delta_{H,p} u + (p-1) |F'(u)|^{p-2} F''(u) |\nh u|^p.
\end{equation}

If we apply \eqref{spilcomp} with the choice $u(x) = N(x)$, and use \eqref{iiH}, we obtain the following result, see \cite[Lemma 2.4]{CDGcap}.

\begin{lemma}\label{L:Lpradial}
For every $1<p<\infty$ one has
\begin{equation}\label{Lpradial}
\Delta_{H,p} F(N) = (p-1)  \frac{|F'(N)|^{p-2}}{N^p} \left[F''(N) + \frac{Q-1}{p-1} \frac{F'(N)}{N}\right] |z|^p.
\end{equation}
\end{lemma}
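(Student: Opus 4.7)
The plan is to apply the chain rule identity \eqref{spilcomp} with $u = N$, so that
\[
\Delta_{H,p} F(N) = F'(N)|F'(N)|^{p-2}\,\Delta_{H,p} N \;+\; (p-1)|F'(N)|^{p-2} F''(N)\,|\nh N|^p.
\]
Using $|\nh N|^2 = |z|^2/N^2$ from Lemma \ref{L:Xec}, the second term immediately rewrites as $(p-1)|F'(N)|^{p-2}F''(N)\,|z|^p/N^p$, which is precisely the $F''$-contribution on the right-hand side of \eqref{Lpradial}. Thus the whole matter reduces to computing $\Delta_{H,p} N$ and recognising that it gives the $F'$-term.

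For $\Delta_{H,p} N$, I expand by the product rule,
\[
\Delta_{H,p} N \;=\; \operatorname{div}_H\bigl(|\nh N|^{p-2}\nh N\bigr) \;=\; \sa \nh(|\nh N|^{p-2}),\nh N\da \;+\; |\nh N|^{p-2}\,\sul N .
\]
The key step is to show that the first summand is zero. Writing $|\nh N|^{p-2} = \bigl(|z|^2/N^2\bigr)^{(p-2)/2}$ and differentiating, one finds
\[
\sa \nh(|\nh N|^{p-2}),\nh N\da \;=\; \tfrac{p-2}{2}(|\nh N|^{2})^{\frac{p-4}{2}}\,\bigl\langle \nh\bigl(|z|^{2}/N^{2}\bigr),\nh N\bigr\rangle .
\]
The horizontal gradient of $|z|^2$ is simply $2z$ (since $X_i|z|^2 = 2z_i$, the correction coming from the vertical part of $X_i$ cancels by antisymmetry of $J(\ve_\ell)$, as in the derivation of \eqref{grad}). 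Therefore
\[
\sa \nh|z|^2, \nh N\da \;=\; 2\sum_{i=1}^m z_i X_i N \;=\; 2\sa z,\nabla_z N\da \;=\; 2|z|^4/N^3,
\]
where the last equality follows from $\nabla_z N = |z|^2 z/N^3$. A direct quotient-rule computation then gives
\[
\sa \nh(|z|^2/N^2),\nh N\da \;=\; \frac{2|z|^4/N^3}{N^2} - \frac{2|z|^2}{N^3}\cdot\frac{|z|^2}{N^2} \;=\; 0,
\]
so the cross term indeed vanishes.

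What remains is $\Delta_{H,p} N = |\nh N|^{p-2}\,\sul N$. Invoking the second identity in \eqref{iiH}, namely $\sul N = \frac{Q-1}{N}|\nh N|^2$, and the first identity $|\nh N| = |z|/N$, I obtain
\[
\Delta_{H,p} N \;=\; \frac{Q-1}{N}\,|\nh N|^{p} \;=\; \frac{(Q-1)|z|^p}{N^{p+1}}.
\]
Substituting this back into the chain rule formula and factoring out $(p-1)|F'(N)|^{p-2}|z|^p/N^p$ yields exactly \eqref{Lpradial}.

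The only nontrivial part of the argument is verifying that the cross term $\sa \nh(|\nh N|^{p-2}),\nh N\da$ vanishes. Once that cancellation is in place (which is a direct consequence of the two formulas in Lemma \ref{L:Xec} together with the identity $\sa z,\nabla_z N\da = |z|^4/N^3$), the rest is routine assembly.
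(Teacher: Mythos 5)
Your proof is correct and follows the paper's route exactly: apply the composition formula \eqref{spilcomp} with $u=N$ and then insert the two identities of Lemma \ref{L:Xec}. The only added content is your explicit check that $\sa \nh(|\nh N|^{p-2}),\nh N\da = 0$, so that $\Delta_{H,p}N = (Q-1)|z|^p/N^{p+1}$ --- a step the paper leaves implicit when invoking \eqref{iiH} --- and that verification is sound.
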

Note that \eqref{Lpradial} implies in particular that $\Delta_{H,p} F(N)$ is not a function of $N$. With Lemma \ref{L:Lpradial} in hands, it is not difficult to now give the
\begin{proof}[Proof of Proposition \ref{P:gauge}]
From the equation \eqref{Lpradial} it is clear that if we want to solve the PDE
\[
\Delta_{H,p} f = - |z|^p
\]
in the gauge ball $B_R$ with boundary condition $f = 0$ on $\p B_R$, then with $f = F \circ N$ we must solve the ODE
\[
(p-1)  \frac{|F'(N)|^{p-2}}{N^p} \left[F''(N) + \frac{Q-1}{p-1} \frac{F'(N)}{N}\right] = - 1,
\]
on the interval $[0,R]$, with condition $F(R) = 0$. We try for a solution in the form $F(N) = \la N^{\alpha}$, with $\la\in \R$ and $\alpha>0$ to be chosen.
Since for such a choice an elementary computation gives
\begin{align*}
& (p-1)  \frac{|F'(N)|^{p-2}}{N^p} \left[F''(N) + \frac{Q-1}{p-1} \frac{F'(N)}{N}\right] = \la^{p-1} \alpha^{p-1} \left[(\alpha-1)(p-1) + (Q-1)\right] N^{\alpha p - 2p - \alpha},
\end{align*}
it is clear that we must choose $\alpha p - 2p - \alpha = 0$, so that
\[
\alpha = \frac{2p}{p-1}.
\]
Once this is done, we want
\[
\la^{p-1} \alpha^{p-1} \left[(\alpha-1)(p-1) + (Q-1)\right] = - 1,
\]
and this forces us to choose
\[
\la = - \frac{p-1}{2p(Q+p)^{\frac{1}{p-1}}}.
\]
The condition $F(R) = 0$ finally gives $f = F(N)$ in the form \eqref{solball}. From $f = \la N^\alpha$, the chain rule, and from the former identity in \eqref{iiH}, it is now clear that $f$ satisfies the overdetermined condition \eqref{od} on $\p B_R$ with
\[
c = |\la| \alpha R^{\alpha-2} = \left(\frac{R^2}{Q+p}\right)^{\frac{1}{p-1}}.
\]
This completes the proof.

\end{proof}

Our next step is to analyse the action of the operator $\Delta_{H,p}$ on another notable class of functions. We begin with the relevant definition.

\begin{definition}\label{D:fparsym}
We say that $f: \bG\to \R$ has \emph{partial symmetry} if, denoting by $(\xi,\sigma)$ the variable point in the space $\R\times \R^k$, there exists $g:\R\times \R^k \to \R$, even in $\xi$, such that
\begin{equation}\label{fgG}
f(z,\sigma) = g(\frac{|z|^2}{4},\sigma).
\end{equation}
\end{definition}
We stress that in Definition \ref{D:fparsym} no symmetry is a priori assumed in the central variable $\sigma$.
It follows from \eqref{kap} that when $f$ has partial symmetry, then for every $\ell = 1,...,k$ one has
\begin{equation}\label{thetal}
\Theta_\ell f = 0.
\end{equation}
In fact,  \eqref{fgG} give
\[
\Theta_\ell f = \sum_{i=1}^m \sa J(\ve_\ell)z,e_i\da \p_{z_i} f = \frac{g_\xi}2 \sum_{i=1}^m \sa J(\ve_\ell)z,e_i\da \sa z,e_i\da = \frac{g_\xi}2 \sa J(\ve_\ell)z,z\da = 0.
\]
We also mention that the proof of the next Lemma \ref{L:nice} could also be directly extracted from \cite[Propositions 3.2 \& 3.3]{GVduke}. However, we insist in providing a direct proof since the special form of the right-hand side of \eqref{fgG} is tailor-made on the geometry of the problems at study in the present work, and plays a critical role in the computations.

\begin{lemma}\label{L:nice}
Let $f$ be a function as in \eqref{fgG} in a group of Heisenberg type $\bG$. If $\xi = \frac{|z|^2}{4}\ge 0$, then
\begin{equation}\label{gradG}
|\nh f|^2 = \xi\ (g_{\xi}^2 + |\nabla_\sigma g|^2),
\end{equation}
and
\begin{equation}\label{bg2}
\Delta_H f = \xi\ \left(g_{\xi\xi} + \frac{m}{2\xi}\ g_\xi+ \Delta_\sigma  g\right).
\end{equation}
\end{lemma}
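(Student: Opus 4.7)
The plan is a direct computation using the explicit form of the horizontal vector fields in \eqref{Xi} together with the identities \eqref{ip} and antisymmetry of $J$ that are special to the Heisenberg-type setting.

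First I would compute $X_i f$. Writing $\xi = |z|^2/4$ and using the chain rule,
\[
\partial_{z_i} f = \tfrac{z_i}{2}\, g_\xi,\qquad \partial_{\sigma_\ell} f = g_{\sigma_\ell},
\]
so \eqref{Xi} gives
\[
X_i f = \tfrac{z_i}{2}\, g_\xi + \tfrac12 \sum_{\ell=1}^k \langle J(\ve_\ell)z, e_i\rangle\, g_{\sigma_\ell}.
\]
To obtain \eqref{gradG} I would square and sum on $i$. The $g_\xi^2$-piece yields $\sum_i \tfrac{z_i^2}{4} g_\xi^2 = \xi g_\xi^2$. The cross terms carry the factor $\sum_i z_i \langle J(\ve_\ell)z,e_i\rangle = \langle J(\ve_\ell)z,z\rangle$, which vanishes by antisymmetry of $J(\ve_\ell)$. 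For the pure $\sigma$-derivative piece I would use the Heisenberg-type identity \eqref{ip}:
\[
\tfrac14\sum_i \sum_{\ell,\ell'} \langle J(\ve_\ell)z,e_i\rangle \langle J(\ve_{\ell'})z,e_i\rangle\, g_{\sigma_\ell}g_{\sigma_{\ell'}}
= \tfrac14 \sum_{\ell,\ell'} \langle J(\ve_\ell)z,J(\ve_{\ell'})z\rangle\, g_{\sigma_\ell}g_{\sigma_{\ell'}}
= \xi |\nabla_\sigma g|^2,
\]
proving \eqref{gradG}.

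For \eqref{bg2} I would invoke the simplified form \eqref{sullaH} of the horizontal Laplacian in Heisenberg type, namely $\Delta_H f = \Delta_z f + \tfrac{|z|^2}{4} \Delta_\sigma f + \sum_\ell \partial_{\sigma_\ell} \Theta_\ell f$. The last sum drops out: since $f$ has partial symmetry, \eqref{thetal} gives $\Theta_\ell f \equiv 0$ for every $\ell$ (the proof of this is already recorded right before the statement of the lemma, and relies on $\langle J(\ve_\ell)z,z\rangle = 0$). The Euclidean Laplacian in $z$ is routine:
\[
\partial_{z_i}^2 f = \tfrac12 g_\xi + \tfrac{z_i^2}{4}\, g_{\xi\xi},\qquad \text{so}\qquad \Delta_z f = \tfrac{m}{2} g_\xi + \xi\, g_{\xi\xi},
\]
and clearly $\Delta_\sigma f = \Delta_\sigma g$. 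Adding these pieces gives
\[
\Delta_H f = \xi\, g_{\xi\xi} + \tfrac{m}{2}\, g_\xi + \xi\, \Delta_\sigma g = \xi\left(g_{\xi\xi} + \tfrac{m}{2\xi}\, g_\xi + \Delta_\sigma g\right),
\]
as desired.

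There is no serious obstacle: the entire argument is a bookkeeping exercise once the two algebraic inputs of Heisenberg-type groups — antisymmetry of $J(\ve_\ell)$ (which kills the cross terms in $|\nh f|^2$ and yields \eqref{thetal}) and the orthogonality relation \eqref{ip} (which collapses the double sum in the $\sigma$-gradient piece to a diagonal) — are brought to bear. The only subtlety worth flagging is that the factor $\xi$ common to both \eqref{gradG} and \eqref{bg2} is what allows the rescaled function $g$ to satisfy, on $\{\xi > 0\}$, an equation that looks like a weighted Euclidean equation in $(\xi,\sigma)$; this is precisely the feature that will be exploited in the proof of Proposition \ref{P:pcylG} downstream.
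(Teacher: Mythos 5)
Your proof is correct and follows essentially the same route as the paper: expand $|\nh f|^2$ via \eqref{Xi}, kill the cross terms with $\langle J(\ve_\ell)z,z\rangle=0$, diagonalise the double sum with \eqref{ip}, and for \eqref{bg2} reduce to $\Delta_H f=\Delta_z f+\frac{|z|^2}{4}\Delta_\sigma f$ using \eqref{thetal} before the routine radial computation of $\Delta_z f$. Nothing is missing.
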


\begin{proof}
To prove \eqref{gradG} we observe that \eqref{Xi} gives
\begin{align*}
|\nh f|^2 & = \sum_{i=1}^m \left(g_\xi \frac{z_i}2  + \frac 12 \sum_{\ell=1}^k \sa J(\ve_\ell)z,e_i\da g_{\sigma_\ell}\right)\left(g_\xi \frac{z_i}2  + \frac 12 \sum_{\ell'=1}^k \sa J(\ve_\ell')z,e_i\da g_{\sigma_{\ell'}}\right)
\\
& = g_\xi^2 \frac{|z|^2}4 + \frac{g_\xi}2 \sum_{\ell=1}^k \sum_{i=1}^m \sa J(\ve_\ell)z,e_i\da \sa z,e_i\da g_{\sigma_{\ell}} + \frac 14 \sum_{\ell, \ell'=1}^k \sum_{i=1}^m \sa J(\ve_\ell)z,e_i\da \sa J(\ve_\ell')z,e_i\da g_{\sigma_\ell}g_{\sigma_{\ell'}}
\\
& = g_\xi^2 \frac{|z|^2}4 + \frac{g_\xi}2 \sum_{\ell=1}^k \sa J(\ve_\ell)z,z\da g_{\sigma_{\ell}} + \frac 14 \sum_{\ell, \ell'=1}^k \sa J(\ve_\ell)z,J(\ve_\ell')z\da g_{\sigma_\ell}g_{\sigma_{\ell'}}
\\
& = g_\xi^2 \frac{|z|^2}4  + \frac{|z|^2}4 \sum_{\ell, \ell'=1}^k  \delta_{\ell \ell'}g_{\sigma_\ell}g_{\sigma_{\ell'}}
\end{align*}
where in the last equality we have used \eqref{ip}. Since $\xi = \frac{|z|^2}4$, it is now clear that \eqref{gradG} follows.

To establish \eqref{bg2} note that from \eqref{sullaH} and \eqref{thetal} we infer that, if $\bG$ is of Heisenberg type, then for a function for which \eqref{fgG} hold, one has
\begin{equation}\label{bg}
\Delta_H f = \Delta_z f + \frac{|z|^2}{4} \Delta_\sigma f.
\end{equation}
The equation \eqref{bg} takes an interesting form if we express it in terms of the function $g$. In fact, an elementary computation shows that if $f$ is as in \eqref{fgG}, then with $\xi = \frac{|z|^2}4$ we have
\begin{equation}\label{laplap}
\Delta_z f = \xi \left(g_{\xi\xi} + \frac{m}{2\xi}\ g_\xi\right).
\end{equation}
Combining \eqref{bg} with \eqref{laplap}, we conclude that \eqref{bg2} does hold.

\end{proof}

\begin{remark}\label{R:lap}
In connection with \eqref{bg2} we emphasise that, if we think of $(w,\sigma)\in \R^{\frac m2+1}\times \R^k$, then the standard Laplacian in such space, acting on functions which depend on $\xi = |w|$ and $\sigma\in \R^k$, is precisely the operator
\begin{equation}\label{cylDel}
\Delta_{(w,\sigma)} = \p_{\xi\xi} + \frac{m}{2\xi}\ \p_\xi + \Delta_\sigma.
\end{equation}
It is worth noting in connection with \eqref{cylDel} that in a group of Heisenberg type the complex structure induced by the Kaplan's mapping $J$ in \eqref{kap} forces $m = 2m_1$ for some $m_1\in \mathbb N$. Thus, the number $\frac m2+1$ is always an integer.
\end{remark}

Next, we establish an interesting generalisation of \eqref{bg2}.

\begin{proposition}\label{P:pcylG}
Let $\bG$ be a group of Heisenberg type and $1<p<\infty$. If $\xi = \frac{|z|^2}4$, then on a function $f$ as in \eqref{fgG} the nonlinear operator \eqref{pH} is given by the formula
\begin{equation}\label{pisub2}
\Delta_{H,p} f  = \xi^{\frac p2} |\nabla g|^{p-2} \left[g_{\xi\xi} + \frac{m+ p -2}{2\xi} g_\xi + \Delta_\sigma g + (p-2) \frac{\Delta_\infty g}{|\nabla g|^2}\right].
\end{equation}
\end{proposition}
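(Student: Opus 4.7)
The plan is to begin from the standard expansion of the horizontal $p$-Laplacian,
\[
\Delta_{H,p} f \;=\; |\nh f|^{p-2}\,\Delta_H f \;+\; (p-2)\,|\nh f|^{p-4}\,\Delta_{H,\infty} f,
\]
where $\Delta_{H,\infty} f := \sum_{i,j} X_i f\, X_j f\, X_i X_j f = \tfrac12\langle \nh f, \nh(|\nh f|^2)\rangle$ is the horizontal infinity-Laplacian (the second equality follows from $\sum_i X_i f\, X_i X_j f = \tfrac12 X_j|\nh f|^2$). The first summand is handled immediately by Lemma~\ref{L:nice}: substituting $|\nh f|^2 = \xi|\nabla g|^2$ and $\Delta_H f = \xi\bigl(g_{\xi\xi} + \tfrac{m}{2\xi}g_\xi + \Delta_\sigma g\bigr)$ gives exactly the $p=2$-like part of \eqref{pisub2}. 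Thus the crux is the computation of $\Delta_{H,\infty} f$.

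The key observation is that $|\nh f|^2 = \xi|\nabla g|^2$, seen as a function on $\bG$, is itself of partial-symmetry form \eqref{fgG} with profile $h_0(\xi,\sigma) := \xi\,|\nabla g|^2$. I would therefore first polarise \eqref{gradG} (e.g.\ by expanding $|\nh(f+\epsilon\tilde f)|^2 = \xi|\nabla(g+\epsilon\tilde g)|^2$ to first order in $\epsilon$) to obtain the bilinear identity
\[
\langle \nh f, \nh \tilde f\rangle \;=\; \xi\,\langle \nabla g, \nabla \tilde g\rangle,
\]
valid for any pair of partially-symmetric functions $f,\tilde f$ with profiles $g,\tilde g$ on $\R\times\R^k$. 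Applying this with $\tilde f = |\nh f|^2$ (whose profile is $h_0$) yields $\Delta_{H,\infty} f = \tfrac{\xi}{2}\,\langle \nabla g, \nabla h_0\rangle$.

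Next, a direct Euclidean computation on $\R\times\R^k$ gives
\[
\partial_\xi h_0 = |\nabla g|^2 + 2\xi\Big(g_\xi g_{\xi\xi} + \textstyle\sum_\ell g_{\sigma_\ell} g_{\xi\sigma_\ell}\Big), \qquad \partial_{\sigma_\ell} h_0 = 2\xi\Big(g_\xi g_{\xi\sigma_\ell} + \textstyle\sum_{\ell'} g_{\sigma_{\ell'}} g_{\sigma_\ell \sigma_{\ell'}}\Big),
\]
and contracting with $\nabla g = (g_\xi, \nabla_\sigma g)$ one recognises $\langle \nabla g, \nabla h_0\rangle = g_\xi|\nabla g|^2 + 2\xi\,\Delta_\infty g$, where $\Delta_\infty g := \sum_{a,b} g_a g_b g_{ab}$ is the standard Euclidean $\infty$-Laplacian on $\R\times\R^k$. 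Therefore $\Delta_{H,\infty} f = \tfrac{\xi}{2}\,g_\xi|\nabla g|^2 + \xi^2\,\Delta_\infty g$.

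Plugging both pieces into the initial expansion and factoring $\xi^{p/2}|\nabla g|^{p-2}$, the $\tfrac{\xi}{2} g_\xi|\nabla g|^2$ contribution from $\Delta_{H,\infty} f$ combines with the $\tfrac{m}{2\xi}g_\xi$ term from $\Delta_H f$ to shift the coefficient from $m$ to $m+p-2$, while $\xi^2\Delta_\infty g$ produces the nonlinear piece $(p-2)\,\Delta_\infty g/|\nabla g|^2$; identity \eqref{pisub2} then drops out after elementary manipulation of powers of $\xi$ and $|\nabla g|$. I do not foresee any serious obstacle: Lemma~\ref{L:nice} together with polarization absorbs all the sub-Riemannian complexity (in particular the complex-structure identities \eqref{ip}), and the residual computation reduces to a chain-rule exercise on $\R\times\R^k$; the only mild care needed is to correctly track the exponents when recombining the $|\nh f|^{p-2}$ and $|\nh f|^{p-4}$ prefactors.
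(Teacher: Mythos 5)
Your proposal is correct and follows essentially the same route as the paper: the decomposition $\Delta_{H,p}f=|\nh f|^{p-2}\big[\Delta_H f+(p-2)\Delta_{H,\infty}f/|\nh f|^2\big]$, Lemma~\ref{L:nice} for the linear part, the polarised identity $\sa\nh F,\nh f\da=\xi(G_\xi g_\xi+\sa\nabla_\sigma G,\nabla_\sigma g\da)$ applied with $F=|\nh f|^2$ (whose profile is $\xi|\nabla g|^2$), and the resulting formula $\Delta_{H,\infty}f=\tfrac{\xi}{2}g_\xi|\nabla g|^2+\xi^2\Delta_\infty g$ are exactly the paper's \eqref{plaplap}, \eqref{grado2} and \eqref{inftycyl}. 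The final bookkeeping of exponents also matches, so no gap.
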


\begin{proof}
We begin by observing that \eqref{pH} can be alternatively (formally) expressed by
\begin{equation}\label{plaplap}
\Delta_{H,p} f = |\nabla_H f|^{p-2} \left[\Delta_H f + (p-2) \frac{\Delta_{H,\infty} f}{|\nabla_H f|^2} \right],
\end{equation}
where we have defined
\begin{equation}\label{LapHinfty}
\Delta_{H,\infty} f = \frac 12 \langle\nabla_H(|\nabla_H f|^2),\nabla_H f\rangle.
\end{equation}
In \eqref{gradG}, \eqref{bg2} we have already computed the terms $|\nh f|^2$ and $\Delta_H f$ in \eqref{plaplap}, so we are left with expressing $\Delta_{H,\infty} f$ in terms of the function $g$. In order to proceed let us assume now that
\begin{equation}\label{FG}
F(z,\sigma) = G(\frac{|z|^2}{4},\sigma)
\end{equation}
be another function with partial symmetry. Applying \eqref{Xi} to both $f$ and $F$ it is easy to deduce similarly to the above computation of $|\nh f|^2$ that
\begin{equation}\label{grado2}
\sa\nh F,\nh f\da = \xi (G_\xi g_{\xi} + \sa\nabla_\sigma G,\nabla_{\sigma}g\da).
\end{equation}
If we now take $F = |\nh f|^2$, and denote by
\[
|\nabla g|^2 = |\nabla_{(\xi,\sigma)} g|^2 = g_\xi^2 + |\nabla_\sigma g|^2,
\]
then in view of \eqref{gradG}, \eqref{LapHinfty} and \eqref{grado2} we find
\begin{equation}\label{inftycyl}
\Delta_{H,\infty} f  = \xi^2 |\nabla g|^2\left\{\frac{\Delta_\infty g}{|\nabla g|^2} + \frac{g_{\xi}}{2\xi}\right\},
\end{equation}
where
\[
\Delta_\infty g = \frac 12 \sa\nabla(|\nabla g|^2),\nabla g\da.
\]
Substituting \eqref{gradG}, \eqref{bg2} and \eqref{inftycyl} in \eqref{plaplap}, we finally obtain
\begin{align*}
\Delta_{H,p} f & = |\nabla_H f|^{p-2} \left[\Delta_H f + (p-2) \frac{\Delta_{H,\infty} f}{|\nabla_H f|^2} \right]
\\
& = \xi^{\frac{p-2}2}\ |\nabla g|^{p-2}\left[\xi\ \left(g_{\xi\xi} + \frac{m}{2\xi}\ g_\xi+ \Delta_\sigma  g\right)+ (p-2)\frac{\xi^2 |\nabla g|^2\left\{\frac{\Delta_\infty g}{|\nabla g|^2} + \frac{g_{\xi}}{2\xi}\right\}}{\xi\ |\nabla g|^{2}}\right]
\\
& = \xi^{\frac{p}2}\ |\nabla g|^{p-2}\left[g_{\xi\xi} + \frac{m+p-2}{2\xi}\ g_\xi+ \Delta_\sigma  g + (p-2)\frac{\Delta_\infty g}{|\nabla g|^2}\right].
\end{align*}
We have thus proved \eqref{pisub2}.

\end{proof}

\begin{remark}\label{R:plap}
In connection with \eqref{pisub2} it is important to emphasise that if we think of $(w,\sigma)\in \R^{\frac{m+p}2}\times \R^k$, then from a comparison with \eqref{plapclassic} we reach the important conclusion that the standard $p$-Laplacian acting on a function $H:\R^{\frac{m+p}2}\times \R^k\to \R$ depending on $\xi = |w|$ and $\sigma\in \R^k$, i.e. such that $H(w,\sigma) = h(|w|,\sigma)$, is precisely the operator
\begin{equation}\label{pcylDel}
\Delta_{p} H = |\nabla h|^{p-2}\left[h_{\xi\xi} + \frac{m+p-2}{2\xi}\ h_\xi+ \Delta_\sigma  h + (p-2)\frac{\Delta_\infty h}{|\nabla h|^2}\right].
\end{equation}
\end{remark}

We mention that this phenomenon of \emph{fractional dimensionality} for a nonlinear degenerate operator such as $\Delta_p$ is connected to the appearance in \eqref{pcylDel} of the Bessel process generated by
\[
\p_{\xi\xi} + \frac{m+p-2}{2\xi}\ \p_\xi.
\]
This is akin to what happens in the famous Caffarelli-Silvestre extension procedure where the fractal dimension is $2(1-s)$, with $s\in (0,1)$ being the fractional power of $(-\Delta)^s$, see \cite{CS}.



\section{Proof of Theorem \ref{T:main}}\label{S:main}

In this section we prove Theorem \ref{T:main}. Throughout the section, we will be working under the assumptions on $\Om$ and $f$ in Theorem \ref{T:main}. We begin by observing the following simple lemma.

\begin{lemma}\label{L:simple}
Let $f$ be a solution to the Dirichlet problem \eqref{i3}. Then
\[
\int_{\Om} |\nh f|^p dx = \int_{\Om} f |z|^p dx.
\]
\end{lemma}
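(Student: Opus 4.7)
The plan is to apply the weak formulation \eqref{ws} with the test function $\varphi = f$ itself. Since $f \in \overset{\circ}{W}_H^{1,p}(\Omega)$ (it is the weak solution vanishing on $\partial \Omega$), it is admissible as a test function in its own equation. Substituting $\varphi = f$ in \eqref{ws} yields
\begin{equation*}
\int_\Omega |\nabla_H f|^{p-2} \langle \nabla_H f, \nabla_H f \rangle\, dx = \int_\Omega |z|^p f\, dx,
\end{equation*}
and the left-hand side collapses to $\int_\Omega |\nabla_H f|^p\, dx$, giving the claimed identity.

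There is really no obstacle here. One may alternatively view this as integration by parts: multiplying the PDE $\Delta_{H,p} f = -|z|^p$ by $f$, integrating over $\Omega$, and using $\operatorname{div}_H(|\nabla_H f|^{p-2} \nabla_H f \cdot f) = f \Delta_{H,p} f + |\nabla_H f|^p$ together with the boundary condition $f = 0$ on $\partial \Omega$ to discard the boundary term. However, the weak-formulation route is cleaner and avoids any regularity issues at the characteristic set or wherever $\nabla_H f$ may degenerate, so that is the approach I would adopt.
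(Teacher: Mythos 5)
Your proof is correct and is exactly the paper's argument: the authors likewise just take $\vf = f$ as a test function in \eqref{ws}. Your additional remark explaining why the weak-formulation route is preferable to a pointwise integration by parts is sensible but not needed.
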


\begin{proof}
It suffices to take $\vf = f$ as a test function in \eqref{ws}.

\end{proof}

Next, we show that if $f$ satisfies the overdetermined assumption \eqref{od}, then the constant $c$ must be strictly positive. In what follows we indicate with $d\sigma$ the differential of $(n-1)$-dimensional measure on $\p \Om$. Furthermore, we indicate with $\nabla f$ the standard (Riemannian) gradient as in \eqref{rg} of a function $f$.

\begin{lemma}\label{L:cpos}
Suppose that $f$ be a solution to \eqref{i3}. If $f$ satisfies \eqref{od}, then
\[
\int_{\Om} |z|^p dx = c^p \int_{\p \Om} \frac{|z|^p}{|\nabla f|} d\sigma.
\]
In particular, we must have $c>0$.
\end{lemma}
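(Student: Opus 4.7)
The plan is to integrate the equation $\Delta_{H,p} f = -|z|^p$ over $\Om$ and convert the left-hand side to a boundary integral via the divergence theorem. The decisive observation that makes this immediate is that, in logarithmic coordinates $(z,\sigma)$, each horizontal vector field $X_i = \p_{z_i} + \frac{1}{2}\sum_{\ell}\sa J(\ve_\ell)z,e_i\da \p_{\sigma_\ell}$ has vanishing \emph{Euclidean} divergence, because its $\p_{z_j}$-coefficient $\delta_{ij}$ is constant and its $\p_{\sigma_\ell}$-coefficient depends only on $z$. Consequently, for any horizontal field $V = \sum_i v_i X_i$, the sub-Riemannian divergence $\operatorname{div}_H V = \sum_i X_i v_i$ coincides with the Euclidean divergence of $V$ in these coordinates, and the classical divergence theorem applies on $\Om$ with Euclidean outer normal and surface measure.

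I would then apply this to $V = |\nh f|^{p-2}\nh f$. Since $f>0$ in $\Om$, $f=0$ on $\p\Om$, and (by the hypotheses carried over from Theorem \ref{T:main}) $\nabla f$ does not vanish on $\p\Om$, the outer Euclidean unit normal is $\nu = -\nabla f/|\nabla f|$. A direct computation from the formula for $X_i$ yields the elementary identity $\sa X_i, \nabla_{\mathrm{Eucl}} f\da = X_i f$, which immediately gives
\[
\bigl\langle |\nh f|^{p-2}\nh f,\nu\bigr\rangle \;=\; -\frac{|\nh f|^{p-2}}{|\nabla f|}\sum_i (X_i f)^2 \;=\; -\frac{|\nh f|^p}{|\nabla f|}.
\]
Combining the divergence theorem with $\Delta_{H,p}f=-|z|^p$ therefore produces
\[
\int_\Om |z|^p\, dx \;=\; \int_{\p\Om}\frac{|\nh f|^p}{|\nabla f|}\, d\sigma.
\]
One can equally well read $\nabla f$ and $d\sigma$ in the Riemannian sense the paper favours, since the coarea formula combined with the equality of the Riemannian and Euclidean volume forms in logarithmic coordinates (the change of frame from $\{\p_{z_i},\p_{\sigma_\ell}\}$ to $\{X_i,T_\ell\}$ is upper-triangular with unit diagonal) forces the ratio $d\sigma/|\nabla f|$ on $\{f=0\}$ to be metric-independent. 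Substituting the overdetermined condition $|\nh f|=c|z|$ on $\p\Om$, interpreted in the measure-theoretic sense of \eqref{gradconv} by letting $\ve\to 0$, delivers the claimed identity.

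The positivity $c>0$ is then immediate: the left-hand side is strictly positive, since $\{z=0\}$ is a proper linear subspace of $\bG$ of codimension $m\geq 1$ and the bounded open set $\Om$ meets $\{z\neq 0\}$ in a set of positive Lebesgue measure, while the right-hand side equals $c^p$ times a nonnegative quantity. I do not anticipate a substantive obstacle here; the only care required is to control the boundary integrand near the characteristic set $\Sigma\subset\p\Om$, but the hypothesis $|\nabla f|>0$ on $\p\Om$ keeps the integrand bounded and continuous on all of $\p\Om$, so the limit $\ve\to 0$ in \eqref{gradconv} passes to the boundary in the standard fashion.
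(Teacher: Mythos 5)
Your proposal is correct and follows essentially the same route as the paper: integrate $\Delta_{H,p}f=-|z|^p$ over $\Om$, apply the divergence theorem to the horizontal vector field $|\nh f|^{p-2}\nh f$, use $\nu=-\nabla f/|\nabla f|$ together with $\langle X_i,\nu\rangle=-X_if/|\nabla f|$ to identify the boundary integrand as $|\nh f|^p/|\nabla f|$, and then substitute the overdetermined condition. The only item the paper makes explicit that you gloss over is the justification of $f>0$ in $\Om$ (nonnegativity from \cite{D} plus the Harnack inequality of \cite{CDG}), which is needed to fix the sign of the normal; this is a one-line citation rather than a gap.
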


\begin{proof}
By \cite[Lemma 2.6]{D} we have $f\ge 0$ in $\Om$. The Harnack inequality in \cite{CDG} implies that $f>0$ in $\Om$. This gives $\nu = - \frac{\nabla f}{|\nabla f|}$ on $\p \Om$. Using \eqref{i3}, and integrating by parts, we thus find
\begin{align*}
& \int_{\Om} |z|^p dx = - \int_{\Om} \Delta_{H,p} f dx = - \sum_{i=1}^m \int_{\p \Om} |\nh f|^{p-2} X_i f \langle X_i,\nu\rangle d\sigma
\\
& = \int_{\p \Om} \frac{|\nh f|^p}{|\nabla f|} d\sigma = c^p \int_{\p \Om} \frac{|z|^p}{|\nabla f|} d\sigma,
\end{align*}
as desired. This latter equation implies that it must be $c>0$.

\end{proof}

In what follows, we indicate with $\mathscr Z$ the infinitesimal generator of the group dilations \eqref{dilG}. We will need the following elementary facts established in \cite{DGtrieste}.

\begin{lemma}\label{L:Zprop}
The vector field  $\mathscr Z$ enjoys the following properties:
\begin{itemize}
\item[(i)]
\emph{div}$_\bG \mathscr Z \equiv Q$.
\item[(ii)]
One has $[X_i,\mathscr Z]=X_i, \quad i=1,...,m,$
\item[(iii)]
$\Delta_H(\mathscr Zu) = \mathscr Z(\Delta_H u) + 2 \Delta_Hu$, \emph{for any} $u\in C^{\infty}(\bG)$.
In particular, $\mathscr Zu$ is harmonic if such is $u$.
\end{itemize}
\end{lemma}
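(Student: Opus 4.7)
The plan is to work in the logarithmic coordinates $(z,\sigma)$ introduced in Section \ref{S:prelim}, where the group dilations take the concrete form $\delta_\la(z,\sigma) = (\la z,\la^2 \sigma)$. Differentiating in $\la$ at $\la=1$ identifies the infinitesimal generator as
\[
\mathscr Z = \sum_{i=1}^m z_i \partial_{z_i} + 2 \sum_{\ell=1}^k \sigma_\ell \partial_{\sigma_\ell}.
\]
Since Lebesgue $(m+k)$-measure is a bi-invariant Haar measure on $\bG$, the group divergence coincides with the Euclidean one in these coordinates, so (i) is just the observation $\mathrm{div}\,\mathscr Z = m + 2k = Q$.

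For (ii), I would exploit the fact that, by construction, each horizontal vector field $X_i$ in \eqref{Xi} is $\delta_\la$-homogeneous of degree $1$, in the sense that
\[
X_i(f\circ \delta_\la)(x) = \la\,(X_i f)(\delta_\la x)
\]
for every smooth $f$ and every $\la>0$. This can be verified at once from \eqref{Xi}: the Euclidean piece $\partial_{z_i}$ has dilation weight $-1$ on the $z$-variable while the vertical piece $\frac12\langle J(\ve_\ell)z,e_i\rangle \partial_{\sigma_\ell}$ picks up one factor of $\la$ from $z$ and a factor $\la^{-2}$ from $\partial_{\sigma_\ell}$. Differentiating the above identity at $\la = 1$ gives $X_i(\mathscr Z f) = X_i f + \mathscr Z(X_i f)$, which is precisely $[X_i,\mathscr Z] = X_i$. (A reader preferring a direct check can compute $[X_i,\mathscr Z]$ in coordinates: the Euclidean part $\partial_{z_i}$ contributes the ``$+X_i$'' by Euler's relation, while the off-diagonal terms cancel using $\langle J(\ve_\ell)z,e_i\rangle$ being degree $1$ in $z$ and $\partial_{\sigma_\ell}$ having weight $2$.)

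For (iii), I would simply iterate (ii):
\[
[X_i^2,\mathscr Z] = X_i[X_i,\mathscr Z] + [X_i,\mathscr Z]X_i = 2 X_i^2,
\]
and then sum over $i=1,\ldots,m$ to obtain $[\Delta_H,\mathscr Z] = 2\Delta_H$, which is the stated commutator identity $\Delta_H(\mathscr Z u) = \mathscr Z(\Delta_H u) + 2\Delta_H u$. The final assertion that $\mathscr Z u$ is harmonic whenever $u$ is follows immediately: if $\Delta_H u = 0$ then $\Delta_H(\mathscr Z u) = \mathscr Z(0) + 2\cdot 0 = 0$.

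There is no real obstacle here; the only thing one must be careful about is keeping track of the different dilation weights $1$ and $2$ on the horizontal and vertical layers so that the homogeneity degree of $X_i$ (and consequently of $\Delta_H$) is correctly identified as $1$ (resp.\ $2$). Everything else is the standard fact that, on a stratified Carnot group of step two, the dilation generator acts on homogeneous differential operators by multiplication by their degree.
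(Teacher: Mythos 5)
Your proof is correct. The paper itself gives no argument for this lemma --- it simply cites \cite{DGtrieste} for these ``elementary facts'' --- so there is nothing internal to compare against; your write-up supplies the standard argument one would find in that reference. The identification of $\mathscr Z = \sum_i z_i\p_{z_i} + 2\sum_\ell \sigma_\ell\p_{\sigma_\ell}$ in logarithmic coordinates, the observation that Haar measure is Lebesgue measure so that $\operatorname{div}_\bG\mathscr Z = m+2k = Q$, the homogeneity identity $X_i(f\circ\delta_\la) = \la\,(X_if)\circ\delta_\la$ differentiated at $\la=1$ to get $[X_i,\mathscr Z]=X_i$, and the Leibniz rule $[X_i^2,\mathscr Z]=X_i[X_i,\mathscr Z]+[X_i,\mathscr Z]X_i=2X_i^2$ for part (iii) are exactly the right steps, and each checks out against the coordinate expression \eqref{Xi} (the key point being that $\langle J(\ve_\ell)z,e_i\rangle$ is linear in $z$, so the vertical piece of $X_i$ has the same dilation degree $1$ as $\p_{z_i}$). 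One cosmetic remark: your parenthetical description of the weights (``$\p_{z_i}$ has dilation weight $-1$ on the $z$-variable \ldots a factor $\la^{-2}$ from $\p_{\sigma_\ell}$'') mixes the two natural sign conventions for the grading; in a final version it would be cleaner to state once that $\p_{z_i}$ and $\p_{\sigma_\ell}$ have homogeneous degrees $1$ and $2$ respectively and that the coefficient $\langle J(\ve_\ell)z,e_i\rangle$ has degree $-1$, so both summands of $X_i$ are $\delta_\la$-homogeneous of degree $1$. This is a matter of exposition, not a gap.
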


Our next result is a generalisation of the Rellich identity first found in \cite{GV}, see also \cite{GR}.

\begin{proposition}\label{P:rellich}
Let $f\in \Gamma^{2,\alpha}(\overline \Om)$ and assume that $\nabla f$ does not vanish on $\p \Om$ and that $\nu = - \frac{\nabla f}{|\nabla f|}$. Then
\[
(p-1) \int_{\p \Om} |\nh f|^p \langle \mathscr Z,\nu\rangle d\sigma = p \int_{\Om} \mathscr Zf\  \Delta_{H,p} f dx - (Q-p) \int_{\Om} |\nh f|^p dx.
\]
\end{proposition}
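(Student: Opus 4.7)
The plan is to carry out a Rellich--Pohozaev style multiplier computation: multiply $\Delta_{H,p} f$ by $\mathscr Z f$, integrate over $\Om$, and rearrange the terms. The ingredients are precisely the three properties of $\mathscr Z$ collected in Lemma \ref{L:Zprop} together with the two boundary identities coming from $f = 0$ on $\partial\Om$ and $\nu = -\nabla f/|\nabla f|$.

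First, I would integrate by parts on the horizontal Laplacian (using that the $X_i$ are left-invariant and Lebesgue measure is bi-invariant Haar):
\begin{equation*}
\int_{\Om} \mathscr Z f\;\Delta_{H,p} f\, dx = \sum_{i=1}^m \int_{\partial\Om} \mathscr Z f\,|\nh f|^{p-2} X_i f\,\langle X_i,\nu\rangle\, d\sigma - \sum_{i=1}^m \int_{\Om} X_i(\mathscr Z f)\,|\nh f|^{p-2} X_i f\, dx.
\end{equation*}
Next, invoking the commutator relation $[X_i,\mathscr Z]=X_i$ from Lemma \ref{L:Zprop}(ii) gives $X_i(\mathscr Z f) = \mathscr Z(X_i f) + X_i f$, so the second interior term splits as $\int_\Om |\nh f|^p\,dx$ plus a piece that can be reorganized via
\begin{equation*}
\sum_{i=1}^m |\nh f|^{p-2} X_i f\,\mathscr Z(X_i f) = \tfrac{1}{2}|\nh f|^{p-2}\mathscr Z\bigl(|\nh f|^2\bigr) = \tfrac{1}{p}\mathscr Z\bigl(|\nh f|^p\bigr).
\end{equation*}
I would then apply the divergence theorem to $|\nh f|^p\,\mathscr Z$, using $\operatorname{div}_\bG \mathscr Z = Q$ from Lemma \ref{L:Zprop}(i), to convert $\frac{1}{p}\int_\Om \mathscr Z(|\nh f|^p)\,dx$ into a boundary term plus $-\frac{Q}{p}\int_\Om |\nh f|^p\,dx$.

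The final step is the boundary simplification. Since $f=0$ and $\nabla f\neq 0$ on $\p\Om$, the surface $\p\Om$ is a level set with unit normal $\nu = -\nabla f/|\nabla f|$. This forces $\sum_{i=1}^m X_i f\langle X_i,\nu\rangle = -|\nh f|^2/|\nabla f|$, and moreover $\mathscr Z f = \langle \nabla f,\mathscr Z\rangle = -|\nabla f|\langle \mathscr Z,\nu\rangle$ on $\p\Om$ (the tangential component of $\mathscr Z$ contributes nothing since $\nabla f$ is parallel to $\nu$). Multiplying these two identities makes the boundary term from the first integration by parts collapse to $\int_{\p\Om} |\nh f|^p\langle \mathscr Z,\nu\rangle\,d\sigma$. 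Collecting everything yields
\begin{equation*}
\int_\Om \mathscr Z f\,\Delta_{H,p} f\, dx = \tfrac{p-1}{p}\int_{\p\Om}|\nh f|^p\langle\mathscr Z,\nu\rangle\,d\sigma + \tfrac{Q-p}{p}\int_\Om |\nh f|^p\,dx,
\end{equation*}
and clearing denominators gives the claimed identity.

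The main technical point to watch is the legitimacy of the integration by parts at the possible characteristic set $\Sigma\subset \p\Om$, where $\nh$ is tangent to $\p\Om$; however the $\Gamma^{2,\alpha}$ regularity of $f$ on $\overline\Om$ and the fact that $\nabla f$ does not vanish on $\p\Om$ make all quantities in the boundary integrals bounded (the factors $|\nabla f|$ appearing in denominators are controlled), so the pointwise reductions above are integrable and no extra argument near $\Sigma$ is needed. The only real bookkeeping challenge is keeping track of the constants in the step involving $\mathscr Z(|\nh f|^p) = \frac{p}{2}|\nh f|^{p-2}\mathscr Z(|\nh f|^2)$, which is what produces the characteristic coefficients $(p-1)$ and $(Q-p)$ on the two sides of the final identity.
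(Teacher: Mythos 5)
Your proposal is correct and is essentially the paper's own argument run in the opposite direction: the paper starts from the divergence theorem applied to $|\nh f|^p\,\mathscr Z$ and ends by integrating $|\nh f|^{p-2}X_if\,X_i(\mathscr Zf)$ by parts, whereas you start by multiplying the equation by $\mathscr Zf$ and finish with the divergence theorem, using the identical ingredients (Lemma \ref{L:Zprop}(i)--(ii), the identity $\mathscr Z(|\nh f|^p)=p|\nh f|^{p-2}\sum_i X_if\,\mathscr Z(X_if)$, and the same boundary reductions via $\nu=-\nabla f/|\nabla f|$). The bookkeeping and the resulting coefficients $(p-1)$ and $(Q-p)$ all check out.
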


\begin{proof}
The divergence theorem and (i) of Lemma \ref{L:Zprop} give
\begin{align*}
& \int_{\p \Om} |\nh f|^p \langle \mathscr Z,\nu\rangle d\sigma = \int_{\Om} \operatorname{div}(|\nh f|^p \mathscr Z) dx
\\
& = Q \int_{\Om} |\nh f|^p dx + \int_{\Om} \mathscr Z\left(|\nh f|^p\right) dx
\\
& = Q \int_{\Om} |\nh f|^p dx + p \sum_{i=1}^m \int_{\Om} |\nh f|^{p-2} X_i f \mathscr Z(X_i f) dx
\\
& = Q \int_{\Om} |\nh f|^p dx + p \sum_{i=1}^m \int_{\Om} |\nh f|^{p-2} X_i f X_i(\mathscr Z f) dx - p \sum_{i=1}^m \int_{\Om} |\nh f|^{p-2} X_i f [X_i,\mathscr Z] f dx
\\
& = (Q-p) \int_{\Om} |\nh f|^p dx + p \sum_{i=1}^m \int_{\Om} |\nh f|^{p-2} X_i f X_i(\mathscr Z f) dx,
\end{align*}
where in the last equality we have used (ii) of Lemma \ref{L:Zprop}. We now integrate by parts in the last integral in the right-hand side, obtaining
\begin{align*}
& p \sum_{i=1}^m \int_{\Om} |\nh f|^{p-2} X_i f X_i(\mathscr Z f) dx = p \sum_{i=1}^m \int_{\p \Om} |\nh f|^{p-2} X_i f \mathscr Z f \langle X_i,\nu\rangle d\sigma - p \int_{\Om} \mathscr Zf \Delta_{H,p} f dx
\\
& =  p \sum_{i=1}^m \int_{\p \Om} |\nh f|^{p} \langle \mathscr Z,\nu\rangle d\sigma - p \int_{\Om} \mathscr Zf \Delta_{H,p} f dx.
\end{align*}
Substituting in the above equality, and rearranging terms, we reach the desired conclusion.

\end{proof}

We are now ready to present the

\begin{proof}[Proof of Theorem \ref{T:main}]
Suppose that $f$ be a solution to \eqref{i3} satisfying the overdetermined condition \eqref{od}. By Lemma \ref{L:cpos} we know that $c>0$. This information allows us to get started in analysing the terms in the integral identity in Proposition \ref{P:rellich}. We begin with
\begin{align*}
& p \int_{\Om} \mathscr Zf \Delta_{H,p} f dx = - p  \int_{\Om} |z|^p \mathscr Zf  dx = - \int_{\Om} \mathscr Z(|z|^p f)  dx + p \int_{\Om} f \mathscr Z(|z|^p)  dx
\\
& = - p  \int_{\p \Om} \langle \mathscr Z,\nu\rangle |z|^p f  d\sigma + p Q \int_{\Om} |z|^p f dx + p^2 \int_{\Om} |z|^p f dx
\\
& = p(Q+p) \int_{\Om} |z|^p f dx,
\end{align*}
where in the second to the last equality we have used (i) in Lemma \ref{L:Zprop} and the fact that $\mathscr Z(|z|^p) = p\ |z|^p$, whereas in the last equality we have used the information $f = 0$ on $\p \Om$. Next, we have
\begin{align*}
& (p-1) \int_{\p \Om} |\nh f|^p \langle \mathscr Z,\nu\rangle d\sigma  = (p-1) c^p \int_{\p \Om} |z|^p \langle \mathscr Z,\nu\rangle d\sigma
\\
& = (p-1) c^p \int_{\Om} \operatorname{div}_{\bG} (|z|^p \mathscr Z) dx = (p-1) Q c^p \int_{\Om} |z|^p  dx + (p-1) c^p \int_{\Om} \mathscr Z(|z|^p) dx
\\
& = (p-1)(Q +p) c^p \int_{\Om} |z|^p  dx,
\end{align*}
where again we have used (i) in Lemma \ref{L:Zprop} and that $\mathscr Z(|z|^p) = p |z|^p$. Substituting the latter two identities in Proposition \ref{P:rellich} we find
\begin{align*}
& (p-1)(Q +p) c^p \int_{\Om} |z|^p  dx = p(Q+p) \int_{\Om} |z|^p f dx - (Q-p) \int_{\Om} |\nh f|^p dx.
\end{align*}
At this point we repeatedly use Lemma \ref{L:simple} to critically reorganise the terms in the latter identity in the following way
\begin{align*}
& (p-1)(Q +p) c^p \int_{\Om} |z|^p  dx = (p-1)Q \int_{\Om} |\nh f|^p dx + p \int_{\Om} |\nh f|^p dx + p^2 \int_{\Om} |z|^p f dx
\\
& = (p-1)(Q+p) \int_{\Om} |\nh f|^p dx - p(p-1) \int_{\Om} |\nh f|^p dx
\\
& + p \int_{\Om} |\nh f|^p dx + p^2 \int_{\Om} |z|^p f dx
\\
& = (p-1)(Q+p) \int_{\Om} |\nh f|^p dx + 2 p \int_{\Om} |z|^p f dx.
\end{align*}
If we now divide by $(p-1)(Q +p)$ both sides of the latter equality, we finally reach the desired conclusion \eqref{ii}.

\end{proof}

We close this section by mentioning that the question of the regularity at the boundary of solutions of problems such as \eqref{i3} is one of basic independent interest. The issue is twofold: what happens away from the characteristic set of the relevant domain, and how good is the solution up to such set. For the former aspect, considerable progress in the case $p=2$ has recently been made in the work \cite{BGM}, where a complete Schauder theory has been developed, see also the previous works \cite{BGMcv} and \cite{BCCu}. Still in the linear case, for earlier contributions at the characteristic set the reader should see    
\cite{CG}, \cite{CGN1}, \cite{CGN2}. For results in the case $p\not=2$ the theory is still largely undeveloped. We refer the reader to \cite{GV}, \cite{GP} and \cite{Ny}, and to the references therein. 


\section{Some basic invariances}\label{S:inv}

The objective of this section is to prove Proposition \ref{L:Hlapinv}. The next  lemma will be quite important in what follows.

\begin{lemma}\label{L:gradinv}
Let $\Om\subset \bG$ be a domain with partial symmetry as in Definition \ref{D:psym}, and let $S\in \mathscr S(\mathfrak h)$. Let $F, f:\Om\to \R$ be two functions and set
\[
G(z,\sigma) = F(Sz,\sigma),\ \ \ \ \ \ \ \ g(z,\sigma) = f(Sz,\sigma).
\]
Then
\begin{equation}\label{commnab2}
\sa\nh G(z,\sigma),\nh g(z,\sigma)\da = \sa\nh F(Sz,\sigma),\nh f(Sz,\sigma)\da.
\end{equation}
In particular, when $F = f$ we obtain from \eqref{commnab2}
\begin{equation}\label{commnab}
|\nh g(z,\sigma)|^2 = |\nh f(Sz,\sigma)|^2.
\end{equation}
\end{lemma}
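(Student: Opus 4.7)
The plan is to expand both sides in logarithmic coordinates using the explicit expression $X_i = \p_{z_i} + \frac{1}{2}\sum_\ell \langle J(\ve_\ell)z, e_i\rangle \p_{\sigma_\ell}$ from \eqref{Xi}, then match the four resulting pieces on each side via the $J$-commutation relation \eqref{commell}, the orthogonality of $S$, and the Heisenberg-type identity \eqref{ip}. Note first that because $\Om$ has partial symmetry, it depends on $z$ only through $|z|^2/4$, and since $|Sz|=|z|$ the map $(z,\sigma)\mapsto (Sz,\sigma)$ sends $\Om$ to itself, so $g$ and $G$ are well defined on $\Om$.

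First I would apply the chain rule. Writing $S_{ji}=\langle Se_i,e_j\rangle$, one obtains
\[
X_i g(z,\sigma)=\sum_j S_{ji}(\p_{z_j}f)(Sz,\sigma)+\frac12\sum_\ell \langle J(\ve_\ell)z,e_i\rangle (\p_{\sigma_\ell}f)(Sz,\sigma),
\]
and the analogous identity for $X_iG$. In parallel, evaluating the right-hand side yields
\[
X_jf(Sz,\sigma)=(\p_{z_j}f)(Sz,\sigma)+\frac12\sum_\ell \langle J(\ve_\ell)Sz,e_j\rangle (\p_{\sigma_\ell}f)(Sz,\sigma),
\]
and likewise for $X_jF(Sz,\sigma)$.

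Next I would expand the inner product $\sum_i X_iG\,X_ig$ into four pieces: a pure horizontal $\times$ horizontal term, two mixed horizontal $\times$ central cross terms, and a pure central $\times$ central term. Using orthogonality of $S$ in the form $\sum_i S_{ji}S_{ki}=\delta_{jk}$, the first piece collapses to $\sum_j (\p_{z_j}F)(\p_{z_j}f)$ evaluated at $(Sz,\sigma)$, matching the corresponding piece of the right-hand side. In each mixed term one encounters
\[
\sum_i S_{ji}\langle J(\ve_\ell)z,e_i\rangle=\langle J(\ve_\ell)z,S^\top e_j\rangle=\langle SJ(\ve_\ell)z,e_j\rangle,
\]
which by the hypothesis \eqref{commell} equals $\langle J(\ve_\ell)Sz,e_j\rangle$, thus producing exactly the mixed term from the right-hand side. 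Finally, the pure central piece becomes
\[
\frac14\sum_{\ell,\ell'}(\p_{\sigma_\ell}F)(\p_{\sigma_{\ell'}}f)\sum_i \langle J(\ve_\ell)z,e_i\rangle\langle J(\ve_{\ell'})z,e_i\rangle=\frac14\sum_{\ell,\ell'}(\p_{\sigma_\ell}F)(\p_{\sigma_{\ell'}}f)\,\langle J(\ve_\ell)z,J(\ve_{\ell'})z\rangle,
\]
which by \eqref{ip} collapses to $\frac{|z|^2}{4}\sum_\ell (\p_{\sigma_\ell}F)(\p_{\sigma_\ell}f)$; since $|Sz|=|z|$, this is exactly the corresponding term on the right-hand side. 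Summing the four matched pieces proves \eqref{commnab2}, and the special case $F=f$ yields \eqref{commnab}.

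The only non-routine algebraic input is the $J$-invariance \eqref{commell}, which does the crucial work of converting $J(\ve_\ell)z$ into $J(\ve_\ell)Sz$ after one of the indices has been contracted against a row of $S$; everything else is bookkeeping plus the two identities $S^\top S=I$ and $\langle J(\ve_\ell)z,J(\ve_{\ell'})z\rangle=|z|^2\delta_{\ell\ell'}$. The main obstacle, therefore, is simply organising the four quadratic pieces cleanly enough that the cancellation is transparent.
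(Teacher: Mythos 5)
Your proof is correct and follows essentially the same route as the paper: a direct expansion of $\sa\nh G,\nh g\da$ via \eqref{Xi} and the chain rule into four quadratic pieces, matched respectively by $S^tS=I$, the commutation relation \eqref{commell} (for the two mixed terms), and \eqref{ip} together with $|Sz|=|z|$ (for the central term). The only addition beyond the paper's argument is your preliminary remark that partial symmetry of $\Om$ makes $g$ and $G$ well defined, which is a harmless and reasonable clarification.
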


\begin{proof}

We begin by observing that the chain rule gives
\begin{equation}\label{nab}
\nabla g(z,\sigma) = \left(S^t \nabla_z f(Sz,\sigma),\nabla_\sigma f(Sz,\sigma)\right),
\end{equation}
and similar expression for $G$.
To prove \eqref{commnab2} we note that \eqref{Xi} and \eqref{nab} give
\begin{align*}
& \sa\nh G(z,\sigma),\nh g(z,\sigma)\da = \sum_{i=1}^m \left(\p_{z_i}G + \frac 12 \sum_{\ell=1}^k \sa J(\ve_\ell)z,e_i\da \partial_{\sigma_\ell}G\right)\left(\p_{z_i}g + \frac 12 \sum_{\ell'=1}^k \sa J(\ve_\ell')z,e_i\da \partial_{\sigma_{\ell'}}g\right)
\\
& = \sa \nabla_z G,\nabla_z g\da + \frac 12 \sum_{\ell=1}^k \sum_{i=1}^m \sa\nabla_z G,e_i\da \sa J(\ve_\ell)z,e_i\da \partial_{\sigma_\ell}g
 +  \frac 12 \sum_{\ell=1}^k \sum_{i=1}^m \sa\nabla_z g,e_i\da \sa J(\ve_\ell)z,e_i\da \partial_{\sigma_\ell} G
 \\
 & + \frac 14 \sum_{\ell, \ell'=1}^k  \sum_{i=1}^m  \sa J(\ve_\ell)z,e_i\da \sa J(\ve_\ell')z,e_i\da \partial_{\sigma_\ell}G  \partial_{\sigma_{\ell'}}g
 \\
& = \sa S^t \nabla_z F(Sz,\sigma),S^t \nabla_z f(Sz,\sigma)\da+ \frac 12 \sum_{\ell=1}^k \sum_{i=1}^m \sa S^t \nabla_z F(Sz,\sigma),e_i\da \sa J(\ve_\ell)z,e_i\da \partial_{\sigma_\ell}f(Sz,\sigma)
\\
& +  \frac 12 \sum_{\ell=1}^k \sum_{i=1}^m \sa S^t \nabla_z f(Sz,\sigma),e_i\da \sa J(\ve_\ell)z,e_i\da \partial_{\sigma_\ell} g(Sz,\sigma)
\\
& = \sa S S^t \nabla_z F(Sz,\sigma),\nabla_z f(Sz,\sigma)\da + \frac 12 \sum_{\ell=1}^k \sa J(\ve_\ell)z,S^t \nabla_z F(Sz,\sigma)\da \partial_{\sigma_\ell}f(Sz,\sigma)
\\
& + \frac 12 \sum_{\ell=1}^k \sa J(\ve_\ell)z,S^t \nabla_z f(Sz,\sigma)\da \partial_{\sigma_\ell}F(Sz,\sigma)  + \frac{|z|^2}4 \sum_{\ell, \ell'=1}^k  \delta_{\ell\ell'} \partial_{\sigma_\ell}G  \partial_{\sigma_{\ell'}}g
\\
& = \sa \nabla_z F(Sz,\sigma),\nabla_z f(Sz,\sigma)\da + \frac 12 \sum_{\ell=1}^k \sa S J(\ve_\ell)z,\nabla_z F(Sz,\sigma)\da \partial_{\sigma_\ell}f(Sz,\sigma)
\\
& + \frac 12 \sum_{\ell=1}^k \sa S J(\ve_\ell)z,\nabla_z f(Sz,\sigma)\da \partial_{\sigma_\ell}F(Sz,\sigma) + \frac{|Sz|^2}4 \sa\partial_{\sigma} G,\partial_{\sigma} g\da,
\end{align*}
where we have used that $S^t S = I$, that $|Sz| = |z|$, and also \eqref{ip}.
From the right-hand side of the last equation and from \eqref{commell}, we now infer
\begin{align*}
& \sa\nh G(z,\sigma),\nh g(z,\sigma)\da = \sa \nabla_z F(Sz,\sigma),\nabla_z f(Sz,\sigma)\da + \frac 12 \sum_{\ell=1}^k \sa J(\ve_\ell) Sz,\nabla_z F(Sz,\sigma)\da \partial_{\sigma_\ell}f(Sz,\sigma)
\\
& + \frac 12 \sum_{\ell=1}^k \sa J(\ve_\ell) Sz,\nabla_z f(Sz,\sigma)\da \partial_{\sigma_\ell}F(Sz,\sigma) + \frac{|Sz|^2}4 \sa\partial_{\sigma} F(Sz,\sigma),\partial_{\sigma} f(Sz,\sigma)\da.
\end{align*}
This proves \eqref{commnab2}.

\end{proof}

\begin{proof}[Proof of Proposition \ref{L:Hlapinv}]
We apply again \eqref{Xi}, obtaining
\begin{align*}
\sul g(z,\sigma) & = \Delta_z g + \frac 12 \sum_{\ell=1}^k \sum_{i=1}^m \big\{\p_{z_i}(\sa J(\ve_\ell)z,e_i\da) \p_{\sigma_\ell} g + \sa J(\ve_\ell)z,e_i\da \p_{z_i}\big(\p_{\sigma_\ell}  g\big)\big\}
\\
& + \frac 12 \sum_{\ell=1}^k \sum_{i=1}^m \sa J(\ve_\ell)z,e_i\da + \frac{|z|^2}4 \Delta_\sigma g
\\
& = \Delta_z f(Sz,\sigma) + \frac{|Sz|^2}4 \Delta_\sigma f(Sz,\sigma) + \sum_{\ell=1}^k \sum_{i=1}^m \sa J(\ve_\ell)z,e_i\da \sa \nabla_z \big(\p_{\sigma_\ell} g\big),e_i\da
\\
& = \Delta_z f(Sz,\sigma) + \frac{|Sz|^2}4 \Delta_\sigma f(Sz,\sigma) + \sum_{\ell=1}^k \sa J(\ve_\ell)z,\nabla_z \big(\p_{\sigma_\ell} g\big)\da
\\
& = \Delta_z f(Sz,\sigma) + \frac{|Sz|^2}4 \Delta_\sigma f(Sz,\sigma) + \sum_{\ell=1}^k \sa J(\ve_\ell)z,S^t \nabla_z \big(\p_{\sigma_\ell} f\big)(Sz,\sigma)\da
\\
& = \Delta_z f(Sz,\sigma) + \frac{|Sz|^2}4 \Delta_\sigma f(Sz,\sigma) + \sum_{\ell=1}^k \sa J(\ve_\ell) Sz,\nabla_z \big(\p_{\sigma_\ell} f\big)(Sz,\sigma)\da
\\
& = \sul f(Sz,\sigma).
\end{align*}
This proves \eqref{invlap}. We note explicitly that in passing from the first to the second   equality we have used the following fact. If we write $z = \sum_{j=1}^m z_j e_j$, then
\[
\p_{z_i}(\sa J(\ve_\ell)z,e_i\da) = \sum_{j=1}^m \p_{z_i}(z_j \sa J(\ve_\ell) e_j,e_i\da) = \sum_{j=1}^m \delta_{ij} \sa J(\ve_\ell) e_j,e_i\da = \sa J(\ve_\ell) e_i,e_i\da = 0.
\]
To establish \eqref{invlapinfty}, we first observe that if we let $F(z,\sigma) = |\nh f(z,\sigma)|^2$, then \eqref{commnab} implies that
\[
G(z,\sigma) = F(Sz,\sigma) = |\nh f(Sz,\sigma)|^2 = |\nh g(z,\sigma)|^2.
\]
From this observation and from \eqref{LapHinfty}, applying \eqref{commnab2} we thus obtain
\begin{align*}
\Delta_{H,\infty} g(z,\sigma)  = \frac 12 \sa \nh G(z,\sigma),\nh g(z,\sigma)\da = \frac 12 \sa \nh(|\nh f|^2)(Sz,\sigma),\nh f(Sz,\sigma)\da = \Delta_{H,\infty} f(Sz,\sigma),
\end{align*}
which proves \eqref{invlapinfty}. Finally, the equation \eqref{invplap} follows from an application to the function $g(z,\sigma)=f(Sz,\sigma)$ of the identities \eqref{commnab}, \eqref{invlap} and \eqref{invlapinfty}.

\end{proof}

\vskip 0.2in


\section{The transitivity Property (H)}\label{S:cyl}

Our goal in this section is to provide a characterisation of those group $\bG$ of Heisenberg type for which the  $J-$\emph{invariant} orthogonal group $\mathscr S(\mathfrak h)$ acts transitively on the unit sphere in $\Vone$. While such result is instrumental to those in the next section, it is also of independent interest. As Theorem \ref{t:property H} shows, Property (H) is satisfied in many geometric situations of interest for this paper.

\begin{theorem}\label{t:property H}
A group of Heisenberg type satisfies \emph{Property (H)} if and only if it is isomorphic to a (real) Heisenberg group, or a quaternionic Heisenberg group, or a real version of a complexified Heisenberg group.
\end{theorem}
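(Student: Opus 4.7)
\medskip

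\noindent\textbf{Proof plan.}
The plan is to re-interpret Property (H) as a question about the orthogonal commutant of a Clifford module structure, and then invoke the Atiyah--Bott--Shapiro classification. Indeed, the identity $J(\sigma)^{2}=-|\sigma|^{2}I_{\mathfrak h}$ from \eqref{J2} means that $J:\mathfrak v\to\mathrm{End}(\mathfrak h)$ extends uniquely to an algebra homomorphism $Cl_{0,k}\to\mathrm{End}(\mathfrak h)$, where $k=\dim\mathfrak v$, endowing $\mathfrak h$ with the structure of a $Cl_{0,k}$-module; the antisymmetry $J(\sigma)^{*}=-J(\sigma)$ guarantees compatibility with the Euclidean inner product on $\mathfrak h$. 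Condition \eqref{commell} then says exactly that $S\in\mathscr S(\mathfrak h)$ if and only if $S$ is an orthogonal $Cl_{0,k}$-module automorphism. Thus Property (H) is equivalent to the transitivity on $S^{m-1}\subset\mathfrak h$ of the orthogonal commutant of the Clifford action.

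By the structure theorem for real Clifford modules, $Cl_{0,k}$ is simple when $k\not\equiv 3\pmod 4$, with a unique irreducible module $V_k$ and division ring of self-intertwiners $D_k=\mathrm{End}_{Cl_{0,k}}(V_k)$, and splits into two simple factors when $k\equiv 3\pmod 4$, with two non-isomorphic irreducibles $V_k^{\pm}$ sharing the same $D_k$. The relevant $8$-periodic values are
\[
(\dim_{\mathbb R}V_k,\,D_k)=(2,\mathbb C),\ (4,\mathbb H),\ (4,\mathbb H),\ (8,\mathbb H),\ (8,\mathbb C),\ (8,\mathbb R),\ (8,\mathbb R)
\]
for $k=1,\ldots,7$. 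Writing $\mathfrak h\cong V_k\otimes_{D_k}W$ (or the analogous decomposition with $V_k^{\pm}$) identifies the orthogonal commutant with the compact classical group $U(n)$, $Sp(n)$, or $O(n)$ corresponding to $D_k=\mathbb C,\mathbb H,\mathbb R$ (with $n=\dim_{D_k}W$), acting on $\mathfrak h$ through the standard representation on $W$; in the split non-isotypic case this commutant becomes a nontrivial product. The key numerical observation is that $\dim_{D_k}V_k=\dim_{\mathbb R}V_k/\dim_{\mathbb R}D_k$ equals $1$ \emph{precisely} for $k\in\{1,2,3\}$; in those cases $\mathfrak h\cong D_k^{n}$, the commutant acts by its standard representation, and this is the classical transitive action of $U(n)$ on $S^{2n-1}\subset\mathbb C^{n}$ (when $k=1$) or of $Sp(n)$ on $S^{4n-1}\subset\mathbb H^{n}$ (when $k=2$ or $k=3$ isotypic).

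A direct match of the brackets via \eqref{kap} identifies these three positive families with the real Heisenberg group (for $k=1$), the real Lie algebra underlying the complex Heisenberg algebra $\mathbb C^{2n}\oplus\mathbb C$ equipped with its $\mathbb C$-bilinear symplectic bracket (for $k=2$, so that the $Sp(n)$-action on $\mathfrak h$ coincides with right-$\mathbb H$-multiplication on $\mathbb H^{n}\cong\mathbb C^{2n}$), and the quaternionic Heisenberg group $\mathbb H^{n}\oplus\mathrm{Im}\,\mathbb H$ with $J(\sigma)z=\sigma z$ (for $k=3$ isotypic). Conversely, for $k\geq 4$ one has $r:=\dim_{D_k}V_k\geq 2$ and each $v\in\mathfrak h$ can be encoded as an $r\times n$ matrix $M(v)$ over $D_k$; since the commutant acts by right multiplication on the columns, the $D_k$-Hermitian Gram matrix $M(v)M(v)^{*}$ is a commutant invariant strictly finer than $|v|^{2}=\mathrm{tr}\,M(v)M(v)^{*}$, producing continuous invariants beyond the total norm that rule out transitivity. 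Similarly, when $k\equiv 3\pmod 4$ and both isotypes $V_k^{\pm}$ are present, the commutant preserves the isotypic decomposition, again precluding transitivity. The main technical obstacle I foresee is formulating this non-transitivity uniformly across all $k\geq 4$ and all multiplicities: the Gram-matrix invariant above is my preferred route, and it bypasses the alternative strategy of cross-referencing the Borel--Montgomery--Samelson classification of compact groups acting transitively on spheres (the approach implicit in the octonionic computation of the centralizer of $\mathrm{Spin}(7)\subset\mathrm{SO}(8)$ credited to R.~Bryant and used in Proposition \ref{P:iwa}).
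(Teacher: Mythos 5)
Your argument is correct and reaches the same conclusion, but by a genuinely different route. The paper assembles Theorem \ref{t:property H} from three external inputs: Riehm's transitivity criteria \cite{Riehm82} for the larger group $A_{\Vone}(\algg)$ (Proposition \ref{p:1st necessary condition}), Saal's classification \cite{Saal09} of $\mathscr S(\mathfrak h)$ to eliminate $k=5,6,7$ (Proposition \ref{p:2nd necessary conds}), and then case-by-case explicit matrix computations for the positive families (Propositions \ref{P:iwa} and \ref{p:complex Heisnebrg}), with the octonionic exclusion resting on Bryant's computation that the centralizer of $\mathrm{Spin}(7)$ in $SO(8)$ is $\{\pm I\}$. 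You replace all of this with a single commutant computation: \eqref{J2} and \eqref{commell} make $\mathscr S(\mathfrak h)$ the orthogonal commutant of a $Cl_{0,k}$-module, the Atiyah--Bott--Shapiro table gives the clean criterion $\dim_{D_k}V_k=1$ exactly for $k\in\{1,2,3\}$, and the Gram-matrix invariant (together with Bott periodicity for $k\geq 8$ and preservation of the isotypic splitting when $k\equiv 3\pmod 4$) kills transitivity in every other case, the octonionic one included, without any centralizer computation. What your route buys is uniformity and self-containedness; what the paper's route buys is explicit matrix realizations of $\mathscr S(\mathfrak h)$ that it reuses elsewhere. Two steps in your plan deserve to be made explicit rather than asserted: (a) that the inner product on $\mathfrak h$ is compatible with the decomposition $V_k\otimes_{D_k}W$ so that the orthogonal commutant really is the full unitary group of a $D_k$-Hermitian form on $W$ (this follows from $J(\sigma)^{*}=-J(\sigma)$ and orthogonality, but it is the standard ``isometric Clifford module'' lemma and should be cited or proved); and (b) that the three surviving isotypic module classes correspond to the three named groups, which rests on Kaplan's theorem that the Clifford module determines the H-type algebra up to isomorphism --- a fact the paper itself invokes, so this is a detail to fill in, not a gap.
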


The proof of Theorem \ref{t:property H} will be presented at the end of this section. We begin the analysis by deriving necessary conditions for Property (H) to hold true. For this we will first identify $\mathscr S(\mathfrak h)$ as a subgroup of some larger groups of automorphisms.  Let $A(\bG)$  be the group of isometric automorphisms of $\bG$. Since $\bG$ is simply connected $A(\bG)$ is isomorphic to the group $A(\algg)$ of isometric automorphisms of $\algg$, see for example \cite[Theorem 1]{Hoch52}.

Although we will not explicitly use this fact, we mention that the group of isometries of $\bG$ is the semidirect product $A(\bG) \ltimes \bG$ with $\bG$  acting by left translations, see \cite[Theorem p. 133]{Ka2}.
A further study and classification of  $A(\algg)$  was carried in \cite{Riehm82}. The classification of the group of automorphisms (not necessarily isometries) was carried in \cite{Saal96}.  Since $A(\algg)$ is a group of isometric automorphisms it preserves the inner product and the bracket, hence we must have $ A(\algg)\subset O(\Vone)\times O(\Vtwo)$. Indeed,  if $\Phi\in  A(\algg)$ and  $\sigma, \, \sigma'\in\Vtwo$ and $z\in\Vone$ it follows that $\text{ad}_{\Phi(\sigma)}=0$ since   we have $[\Phi(\sigma),z]= \Phi([\sigma, \Phi^{-1}(z)])=0$ and  $[\Phi(\sigma),\sigma']=0$, hence $\Phi(\Vtwo)=\Vtwo$  and   then the claim, recalling that $\Vone$ and $\Vtwo$ are orthogonal to each other. In particular, we have the following result stated in \cite[p.135]{Ka2}.

\begin{proposition}\label{p:aut iso}
Two orthogonal transformations $S_1\in O(\Vone)$ and $S_2\in O(\Vtwo)$ induce a Lie algebra automorphism $\Phi=S_1\oplus S_2$ on $\bG$, i.e., $\Phi (z,\sigma)= (S_1 z,S_2\sigma)$, $z\in\Vone$, $\sigma\in \Vtwo$, if and only if for all $\sigma\in \Vtwo$ we have the intertwining relation
\begin{equation}\label{e:sum of isom}
S_1\circ J(\sigma) =J(S_2\, \sigma)\circ S_1.
\end{equation}
In particular, the $J-$invariant orthogonal group  $\mathscr S(\mathfrak h)$ of $\Vone$ is the subgroup of $O(\Vone)$ which fixes the center $\Vtwo$ pointwise, $S_2=I_{\Vtwo}$.
\end{proposition}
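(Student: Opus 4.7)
The plan is to reduce the Lie algebra homomorphism condition for $\Phi=S_1\oplus S_2$ to a single intertwining identity by pairing with elements of $\Vtwo$ and invoking Kaplan's definition \eqref{kap} of $J$.

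First I would notice that $\Phi$ automatically preserves the decomposition $\algg=\Vone\oplus\Vtwo$, and it is an isometry since both $S_1$ and $S_2$ are orthogonal; only the bracket-preservation remains to be checked. Because $\algg$ is two-step nilpotent and $\Vtwo$ is the center, the bracket of any two elements $z+\sigma$ and $z'+\sigma'$ depends only on $z,z'\in\Vone$, namely $[z+\sigma, z'+\sigma']=[z,z']\in \Vtwo$. Hence the general homomorphism condition $\Phi([X,Y])=[\Phi X,\Phi Y]$ collapses to the single identity
\[
S_2[z,z']=[S_1 z, S_1 z'],\qquad z,z'\in\Vone.
\]

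Next I would test this identity against an arbitrary $\sigma\in \Vtwo$ and use \eqref{kap}. The left side becomes $\langle [z,z'],S_2^{t}\sigma\rangle=\langle J(S_2^{t}\sigma)z,z'\rangle$, while the right side becomes $\langle J(\sigma)S_1 z,S_1 z'\rangle=\langle S_1^{t}J(\sigma)S_1 z,z'\rangle$. As $z,z'$ range over $\Vone$, the operator equation $J(S_2^{t}\sigma)=S_1^{t}J(\sigma)S_1$ must hold for every $\sigma$. Using orthogonality of $S_1,S_2$ and replacing $\sigma$ by $S_2\sigma$ recasts this in the symmetric form $S_1\circ J(\sigma)=J(S_2\sigma)\circ S_1$, which is exactly \eqref{e:sum of isom}. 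The converse direction is obtained by running the same chain of equivalences backwards.

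For the ``in particular'' statement I would simply set $S_2=I_{\Vtwo}$ in \eqref{e:sum of isom}: the relation becomes $S_1 J(\sigma)=J(\sigma)S_1$ for every $\sigma\in \Vtwo$, which is precisely the defining condition \eqref{commell} of $\mathscr S(\Vone)$; conversely, for any $S\in\mathscr S(\Vone)$ the pair $(S,I_{\Vtwo})$ satisfies the intertwining relation and so, by the first part, $S\oplus I_{\Vtwo}$ is a Lie algebra automorphism that fixes $\Vtwo$ pointwise. I do not foresee any substantial obstacle in this argument, as every step is an unwinding of definitions; the only care required is to track the transposes of $S_1$ and $S_2$ correctly when they are moved across the inner product, and to verify that brackets involving the center impose no further constraint beyond the one already derived.
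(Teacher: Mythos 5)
Your argument is correct and follows essentially the same route as the paper: reduce bracket-preservation to the single identity $S_2[z,z']=[S_1z,S_1z']$, pair against $\sigma\in\Vtwo$ via \eqref{kap} to obtain $J(S_2^{t}\sigma)=S_1^{t}J(\sigma)S_1$, and then substitute $\sigma\mapsto S_2\sigma$ using orthogonality to arrive at \eqref{e:sum of isom}. The handling of the transposes and the specialisation $S_2=I_{\Vtwo}$ for the final claim both match the paper's proof.
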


\begin{proof}
Indeed, with $z, z'\in\Vone$ and $\sigma, \sigma'\in\Vtwo$  from \eqref{BCH} and the fact that $\Phi$ is an automorphism, the equation
\begin{equation}\label{e:automorphism}
\Phi\left[(z,\sigma),(z',\sigma')\right]=\left[\Phi \left((z,\sigma)\right),\Phi\left((z',\sigma')\right)\right]
\end{equation}
is equivalent to
\[
 S_2[z,z']= [S_1z,S_1z'].
\]
Thus, for any $\sigma\in \Vtwo$ we have from \eqref{kap}
\[
\sa J(S_2^t\sigma)z, z'\da = \sa S_2^t\sigma,[z,z']\ds = \sa \sigma,S_2[z,z']\da =  \sa \sigma,[S_1z,S_1z']\da = \sa J(\sigma)S_1z, S_1z'\da,
\]
which gives \eqref{e:sum of isom} after replacing $\sigma$ with $S_2\sigma$ and using that the maps $S_j$ are orthogonal.

The last claim follows from   \eqref{e:sum of isom}, noting that $\mathscr S(\mathfrak h)$  is exactly the elements of $ O(\Vone)$ which commute with
all  of the almost complex structures  $J(\sigma):\Vone\to \Vone$, $|\sigma|=1$ and $S_2=I_{\Vtwo}$.

\end{proof}

As a side remark, we mention that one can also identify $\mathscr S(\mathfrak h)$ with the automorphism group of the associated Clifford module, \cite{KR07}.
In regards to Property (H),  note that in the case of Iwasawa groups $A(\algg)$ is transitive on the product of the two unit spheres in $\Vone\times\Vtwo$ by the Kostant double transitivity theorem, \cite[Theorem 6.2]{CDKR2}. However, this is not true in general, which leads to the following Proposition giving \emph{necessary conditions} for Property (H) to hold true. In particular,  using also Proposition \ref{p:aut iso}, the fact that there exist groups of Heisenberg type with center of arbitrary dimension, and noting that $\mathscr S(\mathfrak h)\subset A_{\Vone}(\algg)$, where $A_{\Vone}(\algg)$ denotes the restriction of $A(\algg)$ to $\Vone$,   not every Heisenberg type group has the Property (H) of Definition \ref{D:trans}.

For clarifying the statement of the next proposition, we recall that \emph{isotypic} module means that  all irreducible sub-modules are equivalent, see \cite{KR07}, noting that, correspondingly, an H-type algebra is \emph{reducible} if there exists  a proper decomposition $$\algg=\Vone_1\oplus\Vone_2\oplus\Vtwo,$$ such that $\Vone_1\oplus\Vtwo$ and $\Vone_2\oplus\Vtwo$ are both H-type algebras  with respect to the inherited structures. An H-type algebra is \emph{irreducible} if it is not reducible. The paper \cite{Barbano98} contains an explicit description of all irreducible H-type groups with centers of dimension $0\leq k\leq 7$, based on the composition of quadratic forms.

\begin{proposition}\label{p:1st necessary condition}
In a group of Heisenberg type, if Property $(H)$ holds then we have
 \begin{equation}\label{e:general case H prop}
k=\dim\Vtwo\in\{1,2,3,5,6,7\},\  \Vone=\Vone_0^l,\quad 1\leq l\in \mathbb{Z},\  \quad \text{ with $\ l=1\ $ if  $\ k\in\{5,6,7\}$}.
 \end{equation}
Furthermore,  the linear sub-space $\Vone_0$  has the following dimension depending on the dimension of the center:
\begin{equation}\label{e:irr H prop}
\begin{array}{c|c|c|c|c|c|c}
  k & 1 & 2 & 3 & 5 & 6 & 7 \\
  \hline
  \dim\Vone_0 & 2 & 4 & 4 & 8 & 8 & 8
\end{array}.
\end{equation}
\end{proposition}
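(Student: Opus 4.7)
The plan is to derive the structural conclusions from the transitivity of $\mathscr S(\mathfrak h)$ on the unit sphere of $\Vone$, bringing to bear the theory of real Clifford modules together with an orbit analysis of the centralizer.

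The first step is to establish that $\Vone$ must be \emph{isotypic} as a module over the Clifford algebra generated by $\{J(\sigma):\sigma\in\Vtwo\}$, so that $\Vone=\Vone_0^l$ for some simple module $\Vone_0$ and integer $l\geq 1$. The argument is Schur-theoretic: if $\Vone$ admitted a nontrivial decomposition $\Vone=V_\alpha\oplus V_\beta$ into distinct isotypic pieces, every $S\in \mathscr S(\mathfrak h)$ would preserve each piece (since the isotypic decomposition is canonical for the Clifford action), and the $\mathscr S(\mathfrak h)$-orbit of any unit vector in $V_\alpha$ could not escape $V_\alpha$, contradicting transitivity on the whole sphere.

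With $\Vone=\Vone_0^l$ in hand, I would invoke the Atiyah--Bott--Shapiro classification of real Clifford modules: the simple modules have real dimensions $2,4,4,8,8,8,8,16,\ldots$ for $k=1,2,3,\ldots$, with centralizers $\mathbb{F}=\operatorname{End}_{\mathrm{Cl}_k}(\Vone_0)$ equal to $\C,\Quat,\Quat,\Quat,\C,\R,\R,\R,\ldots$ respectively; this immediately produces the dimensions recorded in \eqref{e:irr H prop} for the admissible values of $k$.

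The third and most delicate step is to rule out $k=4$ and $k\geq 8$, and to force $l=1$ when $k\in\{5,6,7\}$. Here I would identify $\mathscr S(\mathfrak h)$ with the compact form of $\operatorname{End}_{\mathrm{Cl}_k}(\Vone)=M_l(\mathbb{F})$, namely $O(l)$, $U(l)$, or $Sp(l)$ according as $\mathbb{F}=\R,\C,\Quat$. This group acts on $\Vone=\Vone_0^l$ by $\mathbb{F}$-linear mixing of the $l$ copies of $\Vone_0$, and its orbits are constrained by Gram-matrix-type invariants valued in the space of $l\times l$ Hermitian matrices over $\mathbb{F}$. Whenever $\dim_{\mathbb{F}}\Vone_0>1$ these invariants are nonconstant on the unit sphere, forcing the orbits to be proper submanifolds and precluding transitivity; this eliminates $k=4$ for every $l$ (since $\dim_{\Quat}\Vone_0=2$) and, combined with a comparison of $\dim \mathscr S(\mathfrak h)$ with $\dim S^{l d_0-1}$, forces $l=1$ whenever $k\in\{5,6,7\}$ and closes out $k\geq 8$ via the crude bound $l d_0-1\geq 16l-1$. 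The principal obstacle will be the invariant-theoretic step for $k=4$ with moderately large $l$, where a naive dimension count no longer suffices (for instance $\dim Sp(4)=36$ exceeds $\dim S^{31}=31$), so one must exploit the finer fact that the $2\times 2$ quaternionic Hermitian Gram matrix of the $l$ component vectors is $\mathscr S(\mathfrak h)$-invariant, pinning each orbit inside a level set that cannot cover the sphere.
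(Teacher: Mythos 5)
Your proposal is correct in substance but follows a genuinely different route from the paper. The paper's proof is essentially a deduction from cited classification results: it invokes Riehm's theorem (via \cite{Riehm82} and \cite[p.~199]{K84}) characterising exactly when the \emph{larger} group $A_{\Vone}(\algg)$ of isometric automorphisms restricted to $\Vone$ acts transitively on the sphere (always for $k=1,2$; iff $m=8$ for $k\in\{5,6,7\}$; iff $\Vone$ is isotypic for $k=3$; never for $k=4$ or $k\geq 8$), observes that $\mathscr S(\mathfrak h)\subset A_{\Vone}(\algg)$ so Property (H) forces these conditions, and then reads off the dimensions of $\Vone_0$ from the Kaplan--Ricci table in \cite[p.~419]{KapR}. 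You instead argue directly about $\mathscr S(\mathfrak h)$: Schur's lemma gives isotypy (your step here is correct and, for $k\not\equiv 3 \pmod 4$, automatic since the Clifford algebra is then simple), the Atiyah--Bott--Shapiro table gives the dimensions and commutants, and the Gram-matrix invariant of the commutant action $X\mapsto XX^{*}$ on $M_{d\times l}(\mathbb F)$, $d=\dim_{\mathbb F}\Vone_0$, rules out transitivity whenever $d>1$. This buys you a self-contained argument that needs neither Riehm's theorem nor the Kaplan--Ricci table beyond ABS, and it actually proves \emph{more} than the stated proposition: since $d>1$ for every $k\geq 4$, your invariant eliminates $k\in\{5,6,7\}$ outright (for all $l$, including $l=1$), thereby absorbing most of the paper's subsequent Proposition \ref{p:2nd necessary conds} as well.

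One caveat on your third step: the division of labour you describe between the Gram-matrix invariant and the dimension count is not right as stated. The comparison of $\dim\mathscr S(\mathfrak h)$ with $\dim S^{ld_0-1}$ cannot ``force $l=1$'' for $k\in\{5,6,7\}$ nor ``close out $k\geq 8$'': the inequality goes the wrong way for large $l$ (for $k=8$ one has $\mathscr S(\mathfrak h)\cong O(l)$ and $\dim O(l)=l(l-1)/2\geq 16l-1$ once $l\geq 33$, so the crude bound $ld_0-1\geq 16l-1$ decides nothing there). This is harmless only because the Gram-matrix argument you state in general form already covers every case with $\dim_{\mathbb F}\Vone_0>1$, i.e.\ all of $k=4$, $k\in\{5,6,7\}$ and $k\geq 8$; you should lean on it uniformly and drop the dimension count. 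You should also make explicit the (standard but not free) fact that the inner product can be normalised so that the orthogonal part of the commutant $M_l(\mathbb F)$ is the compact unitary group $O(l)$, $U(l)$ or $Sp(l)$ acting by right multiplication on $M_{d\times l}(\mathbb F)$ with the $\mathbb F$-Hermitian Gram matrix as invariant; this is consistent with the identification of $\mathscr S(\mathfrak h)$ the paper quotes from \cite[p.~65]{Saal09}.
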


\begin{proof}
  For any H-type group, from \cite{Riehm82}, see also \cite[p. 199]{K84}, we have a complete characterization when $A_{\Vone}(\algg)$ is transitive on the unit sphere in $\Vone$:
\begin{enumerate}[i)]
\item if the dimension of the center $k=1$ or $k=2$ then $A_{\Vone}(\algg)$ is always transitive on the sphere in the first layer;
\item if $k\in\{5,6,7\}$ then $A_{\Vone}(\algg)$ is transitive on the sphere in $\Vone$  if and only if $m=\dim\Vone=8$;
\item if $k=3$ , $A_{\Vone}(\algg)$ is transitive on the sphere in $\Vone$ if and only if  $\Vone$ is isotypic as a module over the Clifford algebra Cliff$(\Vone)$ of the quadratic form $-|\xi_1|^2$, $\xi_1\in \Vone$;
    \item   $A_{\Vone}(\algg)$ is not transitive whenever $\dim \Vtwo=4$ or $\dim\Vtwo\geq 8$.
\end{enumerate}

Suppose $\bG$ satisfy the Property (H).
From (i) -- (iv)  above and $\mathscr S(\mathfrak h)\subset A_{\Vone}(\algg)$, the following conditions must hold true:
\begin{equation}\label{e:irreduc}
\begin{aligned}
 & (i) \quad 1\leq k\leq 7, \quad k\not=4;\\  
 & (ii) \quad \text{if $k=3$ then $\Vone$ is isotipic};\\ \ 
 & (iii) \quad \text{if $k\in\{5,6,7\}$ then $m=\dim\Vone=8$}.
\end{aligned}
\end{equation}
From the classification of the H-type Lie algebras, see \cite{Ka2} and \cite[pp.119-121]{K84a}, it follows that the groups of Heisenberg type satisfying the conditions (i)-(iii) with centers of dimension 1, 3 or 7 are exactly the non-Abelian Iwasawa type groups, while when the center is two-dimensional the group is the real version of the complex Heisenberg group. We will study these four cases explicitly later in the section. Let us mention explicitly the  irreducible cases satisfying the necessary conditions \eqref{e:irreduc}.  If $k\in\{5,6,7\}$ then $\Vone=\Vone_0$.
The cases $k=1,3,7$ with $l=1$ are the irreducible Iwasawa type groups of \emph{lowest} possible dimensions. The general cases, including the irreducible ones,  of the Iwasawa groups are considered in Proposition \ref{P:iwa}. The case $k=2$ is the lowest dimensional real version of the complexified Heisenberg group, see Example \ref{eg:complex Heisnebrg} and  Proposition \ref{p:complex Heisnebrg} for the reducible case.

On the other hand, by  \cite[p. 419]{KapR} the non Abelian \emph{irreducible} $H$-type algebras satisfying \eqref{e:irreduc} have the dimensions of their first layer  $\Vone_0$ specified in \eqref{e:irr H prop}. By the definition of reducible H-type algebra and (i) - (iii), it follows that $\Vone$ has the claimed in \eqref{e:general case H prop} form.

\end{proof}

Next, we will show that Property (H) does not hold if the center is of dimension 5, 6, or 7.

\begin{proposition}\label{p:2nd necessary conds}
 If $\bG$ is a group of Heisenberg type which has \emph{Property (H)}, then (up to isomorphism) $\bG$ is one of the real or quaternionic Heisenberg groups, or the real version of a complex Heisenberg group.
 \end{proposition}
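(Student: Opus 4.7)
The plan is to eliminate the three cases $k \in \{5, 6, 7\}$ that Proposition \ref{p:1st necessary condition} leaves open; the remaining cases $k \in \{1, 2, 3\}$ correspond precisely to the three families in the statement (real Heisenberg, real version of complex Heisenberg, and quaternionic Heisenberg groups), where $\mathscr S(\Vone)\cong U(l)$, $Sp(l)$, and $Sp(l)$, respectively, act transitively on the relevant spheres.

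The key step is to recast $\mathscr S(\Vone)$ as a Clifford centralizer. Because $J(\sigma)^2 = -|\sigma|^2 I_{\Vone}$, the map $J : \Vtwo \to \operatorname{End}(\Vone)$ extends to a representation of the real Clifford algebra $\operatorname{Cliff}(0, k) = \operatorname{Cliff}(\Vtwo, -|\cdot|^2)$ on $\Vone$. Writing $A$ for its image in $\operatorname{End}_{\R}(\Vone)$ and $A'$ for the commutant of $A$ there, it follows immediately from the definition of $\mathscr S(\Vone)$ that
\[
\mathscr S(\Vone) = O(\Vone) \cap A'.
\]

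For $k \in \{5, 6, 7\}$, Proposition \ref{p:1st necessary condition} forces $\Vone = \Vone_0 \cong \R^8$ with $\Vone_0$ an irreducible Clifford module. I would then invoke the Atiyah--Bott--Shapiro classification
\[
\operatorname{Cliff}(0, 5) \cong M_4(\C), \qquad \operatorname{Cliff}(0, 6) \cong M_8(\R), \qquad \operatorname{Cliff}(0, 7) \cong M_8(\R) \oplus M_8(\R),
\]
together with the double commutant theorem. In each case $A$ acts irreducibly on $\R^8$, so $A'$ is the center of the relevant simple factor: $A' \cong \C$ when $k = 5$, and $A' \cong \R$ when $k \in \{6, 7\}$. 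Intersecting with $O(8)$ gives $\mathscr S(\Vone_0) \cong U(1)$ for $k = 5$ and $\mathscr S(\Vone_0) = \{\pm I\}$ for $k \in \{6, 7\}$. Since a transitive action on $S^7$ requires orbit dimension $7$, none of these subgroups of $O(8)$ can be transitive, and Property (H) is ruled out in all three cases.

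The main technical point is the bookkeeping of the Clifford isomorphisms and the identification of the image $A$ inside $M_8(\R)$; once this is done, the double commutant theorem reduces everything to linear algebra. For the octonionic case $k = 7$ there is the alternative route of identifying $\mathscr S(\Vone_0)$ with the centralizer of $\operatorname{Spin}(7) \hookrightarrow SO(8)$ acting via its $8$-dimensional spin representation, and invoking R.~Bryant's computation cited in the Acknowledgment, which yields the same answer $\{\pm I\}$.
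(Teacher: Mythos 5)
Your argument is correct, and it reaches the paper's conclusion by a genuinely different route at the key step. The paper simply quotes Saal's classification of $\mathscr S(\mathfrak h)$ (which lists $O(2)$ for $k=5$ and $\mathbb{Z}_2$ for $k=6,7$) and observes that, since $\dim\Vone=8$ in those cases by Proposition \ref{p:1st necessary condition}, none of these groups can act transitively on $S^7$. You instead compute $\mathscr S(\mathfrak h)=O(\Vone)\cap A'$ directly from the Atiyah--Bott--Shapiro table and the double commutant theorem; this is self-contained modulo standard Clifford algebra facts and, as you note, independently recovers Bryant's answer $\{\pm I\}$ for the octonionic case. Two small caveats. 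First, your phrase ``$A'$ is the center of the relevant simple factor'' is not the right general principle: for $A\cong M_n(D)$ acting on its simple module $D^n$ the commutant is $D^{\mathrm{op}}$, which coincides with the center only when $D\in\{\R,\C\}$; this is harmless for $k\in\{5,6,7\}$ but would give the wrong answer if applied to the quaternionic cases $k\in\{2,3\}$. Second, for $k=5$ your computation yields $\mathscr S(\mathfrak h)\cong U(1)$ whereas the paper (quoting Saal) records $O(2)$; the discrepancy is immaterial here since neither acts transitively on $S^7$, but you may want to reconcile the conventions if you rely on the exact group elsewhere. Finally, note that the identification of the surviving cases $k\in\{1,2,3\}$ with the three named families, which you assert in your opening paragraph, is exactly what the paper establishes inside the proof of Proposition \ref{p:1st necessary condition} via the classification of H-type algebras, so you are entitled to it.
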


\begin{proof}

In general, the group $\mathscr S(\mathfrak h)$ has been classified up to an isomorphism in \cite[p. 65]{Saal09}. 
Specializing \cite[p. 65]{Saal09} to the case when the necessary condition \eqref{e:general case H prop} holds true, then the group $\mathscr S(\mathfrak h)$ is algebraically isomorphic to one of the following groups, where $k=\dim\Vtwo$ denotes the dimension of the center $\Vtwo$ and $m=\dim\Vone$ denotes the dimension of the horizontal space:
\begin{enumerate}[i)]
 \item the complex unitary group $U(m/2)$ if $k= 1$;
  \item  the quaternion unitary group $Sp(m/4)$ if $k= 2$; 
  \item  $Sp(m/4)$ if $k= 3$;
  \item  the orthogonal group $O(2)$ if $k= 5$;
  \item $\mathbb{Z}_2$ if $k= 6,7$. 
\end{enumerate}
As a consequence, Property (H) can hold only in the first three cases taking into account that $k=\dim \Vone=8$ in the last two cases by Proposition \ref{p:1st necessary condition}.
\end{proof}

Before turning to the proof of Proposition \ref{P:iwa} in which we show that  the Iwasawa type groups excluding  the octonion case have property (H) we make a few comments. Recall that the Iwasawa type groups  are the nilpotent part in the Iwasawa decomposition of the simple Lie groups of real rank one, hence   the isometry group of the non-compact symmetric spaces  of real rank one. Thus, the Iwasawa type groups can be described as the boundaries of the  real, complex, quaternionic or octonionic hyperbolic space. Alternatively, they can be characterized as the Heisenberg type groups that satisfy the $J^2$-condition, \cite[Theorem 1.1]{CDKR}, \cite[Theorem 4.2]{CDKR}, \cite[Theorem 6.1]{CDKR}, \cite{CDKR2} and \cite{KR07}.  Recall  that the Heisenberg type algebra $\algn = \mathfrak h \oplus \mathfrak v$ satisfies the  $J^2$-condition if for any $z\in \mathfrak h$ and every $\sigma, \sigma'\in \mathfrak v$ such that $\sa\sigma, \sigma'\da=0$, there exists $\sigma''\in \mathfrak v$ such that
\begin{equation}\label{e:J2 cond}
J(\sigma)J(\sigma')z=J(\sigma'')z.
\end{equation}

\begin{proof}[Proof of Proposition \ref{P:iwa}]

The proof of the transitivity of  $\mathscr S(\mathfrak h)$ on the sphere in $\Vone$ in the case of the complex and quaternionic Heisenberg groups is contained in the proof of the Kostant double transitivity theorem presented in \cite[Theorem 6.2]{CDKR2}. Note that $Ad(M)$ used in \cite[Theorem 6.2]{CDKR2} is the group $A(\bG)$ here taking into account \cite[Lemma 5.5]{CDKR2}. Alternatively, a proof is contained in the proof of \cite[Proposition 2.2]{KR10}. That transitivity does not hold in the case of the octonian Heisenberg group follows from Proposition \ref{p:2nd necessary conds}.

Below, we include alternative elementary proofs  that will use the explicit form   of the Iwasawa groups given in \cite[Section 9.3]{Pansu89} as boundaries of the respective hyperbolic spaces \cite[Section 19]{Mo}.
Let $\mathbb{K}$ denotes one of the real division algebras: the real numbers $\mathbb{R}$, the complex numbers $\mathbb{C}$, the quaternions $\mathbb{H}$, or the octonions $\mathbb{O}$ and $\bar q$ denotes the respective conjugation.  We will use the obvious multidimensional version of the following identifications in the one dimensional case:  in the complex case $q=x+iy \in \mathbb{C}$ is identified with $(x,y)\in {\R}^{2}$; $q=t+ix+jy+kz \in \mathbb{H}$ is identified with $(t,x,y,z)\in {\R}^4$, and $q=\sum_{\alpha=0}^7 t_\alpha \textbf{e}_\alpha\in \mathbb{O}$ is identified with $(t_0,\dots,t_7)\in \R^8$, where  $\textbf{e}_\alpha$, $\alpha=1,\dots,7$, are the standard unit imaginary octonions and  $ \textbf{e}_0=1$, see \cite{Baez02}.  The Iwasawa group is isometrically isomorphic to $\mathbb{R}^n$ in the degenerate case when the Iwasawa group is Abelian or to one of the Heisenberg groups  $\bG=\QK \ =\ \Kn\times\text {Im}\, \mathbb{K}$ \index{Heisenberg group $\QK$} with the group law
given by
\begin{equation}\label{e:H-type Iwasawa groups}
  (q_o, \omega_o)\circ(q, \omega)\ =\ \left(q_o\ +\ q, \omega\ +\ \omega_o\ + \ \frac 12 \text
{Im}\  (\bar q_o\, q)\right),
\end{equation}
where $q,\ q_o\in\Kn$, $\bar q_o\,  q=\sum_{\alpha=1}^n \bar q_o^\alpha\,  q^\alpha$, with $q_o^\alpha$ and $  q^\alpha$ respectively denoting complex, quaternionic or octonion coordinates of $q_0$ and $q$, and $\omega, \omega_o\in \text {Im}\, \mathbb{K}$, see also  \cite{IV15}, \cite{CaChMa09}, \cite{IV2}, \cite{WW14} for more details in the particular  cases.
From \eqref{BCH} the  bracket on the Lie algebra is given by
\begin{equation}\label{e:Iwasawa bracket}
[(q_o, \omega_o), \, (q, \omega)]=\text {Im}\  (\bar q_o\, q).
\end{equation}
 Clearly $\text {Im}\, \mathbb{K}$ is the center of the Lie algebra. In the non-Euclidean case the Lie algebra $\algg$ of $\bG$ has center of dimension $1$, $3$, or $7$.  The inner product on $\Kn\times\text {Im}\, \mathbb{K}$ is the standard Euclidean product in the corresponding real vector space given by
\[
\sa(q_o, \omega_o), \, (q, \omega)\da= \text {Re}\  (\bar q_o\, q)+ \text {Re}\  (\bar \omega_o\, \omega).
\]
We will denote by $\Vone$ be the real Euclidean vector space underlining the space $\Kn$, while $\Vtwo$ will denote the real Euclidean vector space defined by $\text {Im}\, \mathbb{K}$.

For a purely imaginary $\omega\in \text {Im}\, \mathbb{K}$ the corresponding map $J_\omega$ on the horizontal space is given by multiplication on the right by $ \omega$ in $\Kn$, i.e., the identity $$\sa\omega, \text {Im}\  (\bar q_o\, q)\da=\sa J_\omega q_0, q\da$$ is equivalent to
\begin{equation}\label{e:form of J}
\sa \omega, \text {Im}\  (\bar q_o\, q)\da\ =\  \sa q_0\omega, q\da.
\end{equation}
To see the latter identity, compute the left-hand side as follows
\begin{multline}\label{e:im part}
\sa\omega, \text {Im}\  (\bar q_o\, q)\da=\frac 12\left[\sa\omega,\bar q_o\, q\da-  \sa\omega,\bar q\, q_0\da \right]
=\frac 12 \left[\sa\bar\omega,\overline {\bar q_o\, q}\da -  \sa\omega,\bar q\, q_0\da \right]\\
=\frac 12 \left[- \sa\omega, {\bar q\, q_0}\da-  \sa\omega,\bar q\, q_0\da\right]=- \sa\omega, {\bar q\, q_0}\da,
\end{multline}
noting that conjugation preserves the inner product and that $\omega\in \mathbb{K}$ means that $\bar\omega=-\omega$. On the other hand,
\begin{equation}\label{e:right multipl}
\sa q_0\omega, q\da =\sa q_0, q\bar \omega\da=\sa\bar q q_0, \bar \omega\da=- \sa\omega, {\bar q\, q_0}\da,
\end{equation}
see for example \cite[Section 1]{Bryant82}.

The maps $J_\omega$ generate a subgroup of SO$(\Vone)$ as $\omega$ runs through the unit sphere of $\text {Im}\, \mathbb{K}$, i.e., of $\Vtwo$. In the  complex case the group is $\mathbb{Z}_2$ generated by one fixed complex structure $J$, $J^2=-I$. In the quaternionic case, the group is (by definition) the group Sp$(1)$. Finally, in the octonionic case, the group is (by definition) the group Spin$(7)$.

Therefore, an orthogonal $\R$-linear map $S$ on $\Kn$ commutes with all $J_\omega$'s,  $S\in\mathscr S(\mathfrak h)$, if and only if $S$ is in the centralizer in SO$(\Vone)$ of the subgroups described above. Hence, $S\in U(n)$ in the complex case; $S\in Sp(n)$, the quaternionic unitary group (acting on the left), in the quaternionic case with $n\geq 1$. Each of these groups is transitive on the unit sphere in $\Kn$ (with respect to its standard linear action), which has been used in the holonomy theorem, see for example \cite{Be}, or can be proven easily directly by mapping adapted orthonormal bases to each other. For example, in the seven dimensional quaternionic group, the  matrices  $S\in\mathscr S(\mathfrak h)$ are given by
\begin{equation}\label{e:l_q}
S=\left(
    \begin{array}{cccc}
      d & -a & -b & -c \\
      a & d & -c & b \\
      b & c & d & -a \\
      a & -b & a & d \\
    \end{array}
  \right), \qquad a^2+b^2+c^2+d^2=1.
\end{equation}
Another short calculation shows that $S$ is the real expression of quaternionic multiplication on the left by a unit quaternion $q=d+ai+bj+ck$. The transitivity is then obvious, because given two unit quaternions $q$ and $p$ we have trivially, $(p\bar q)q=p$.

On the other hand, in the octonionic case, using the non-associativity, the right (resp. left) multiplications can be realized as left (resp. left) multiplications by a unit octonians, hence the right multiplications by unit octonians generate the whole group $SO(8)$, see \cite[Section 8.4, Theorem 7]{CoSm} or \cite[Section B]{MaSc93}. However, in our case we allow only right multiplications by unit purely imaginary octonians, i.e., we have the group Spin(7) $\subset$ SO(8) as the subgroup  generated by the (real form of) right multiplication by unit octonians which are purely imaginary. In this case, the centralizer of Spin(7) $\subset$ SO(8) is $Z_2=\{\pm I\}$, which was pointed to us by R. Bryant. Therefore the transitivity Property (H) in Definition \ref{D:trans} fails.

\end{proof}

In the following example we consider the case of the lowest dimensional  group of Heisenberg type with two dimensional center, see \cite[p.41]{Ka83} and \cite[Section 2]{Rei01}.
\begin{example}\label{eg:complex Heisnebrg}
Consider the complexified Heisenberg group with complex dimension three,  defined by the Lie algebra  (over $\C$) with generators $X_1, \, X_2, \, Z$ with the only non-trivial commutator given by
\[
[X_1,X_2]=Z.
\]
Let $J$ be the complex structure given by multiplication by $i$. As a real Lie algebra this is an algebra of Heisenberg type with $\Vone=\text{span}\, \{X_1, X_2, X_3=JX_1, X_4=JX_2\}$ and center  $\Vtwo=\text{span}\{Z_1(=Z), Z_2=J Z_1 \}$, where the vectors are declared to be an orthonormal basis. The nontrivial commutators are
\[
Z_1=[X_1, X_2]=-[X_3, X_4], \qquad Z_2=[X_1,X_4]=-[X_2,X_3].
\]

Letting $J_j=J_{Z_j}$, $j=1,2$,  \eqref{kap} shows
\[
J_1X_1=X_2, \ J_1X_3=-X_4, \ J_2X_1=X_4, \ J_2X_2=-X_3, \ J_j^2=-I, \ J_{Z_1}J_{Z_2}=-J_{Z_2}J_{Z_1}=J,
\]
hence the H-type property holds.  A small calculations shows that $ S(\mathfrak h)$ is given by the matrices in \eqref{e:l_q}, hence \emph{Property (H)} in Definition \ref{D:trans} is satisfied.
\end{example}

The above example essentially gives the general case of groups of Heisenberg type with center of dimension two. As shown in \cite[Theorem 3.1]{El18} the complex Heisenberg algebras  are the only complex algebras of Heisenberg type as defined in \cite[Definition 2.2]{El18}.

\begin{proposition}\label{p:complex Heisnebrg}
The real form of the complexified Heisenberg group satisfies the \emph{Property (H)}.
\end{proposition}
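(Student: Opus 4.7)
The plan is to generalize Example \ref{eg:complex Heisnebrg} from complex dimension three to arbitrary complex dimension $2l+1$. Start from the complex Heisenberg Lie algebra with $\C$-orthonormal generators $X_1,\ldots,X_{2l},Z$ whose only non-trivial brackets are $[X_{2i-1},X_{2i}]=Z$, $i=1,\ldots,l$. Let $J$ denote multiplication by the imaginary unit $i$ viewed as a real linear operator. Regarding $\algg$ as a real Lie algebra of dimension $4l+2$, the horizontal layer is $\Vone=\mathrm{span}_{\R}\{X_1,\ldots,X_{2l},JX_1,\ldots,JX_{2l}\}$ and the center is $\Vtwo=\mathrm{span}_{\R}\{Z_1=Z,\,Z_2=JZ\}$, with all basis vectors declared orthonormal.

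First, I would verify the H-type property block by block. For each $i=1,\ldots,l$, the four-dimensional subspace $V_0^{(i)}=\mathrm{span}_{\R}\{X_{2i-1},X_{2i},JX_{2i-1},JX_{2i}\}$ is invariant under the Kaplan maps $J(Z_1)$ and $J(Z_2)$, which act on $V_0^{(i)}$ exactly as in Example \ref{eg:complex Heisnebrg}. In particular, on all of $\Vone$ one has $J(Z_1)^2=J(Z_2)^2=-I$ and $J(Z_1)J(Z_2)=-J(Z_2)J(Z_1)=J$, and then for any $\sigma=aZ_1+bZ_2\in\Vtwo$ the linearity of $\sigma\mapsto J(\sigma)$ yields $J(\sigma)^2=-(a^2+b^2)I=-|\sigma|^2 I$, confirming \eqref{J2}. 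Equivalently, the triple $(J(Z_1),J(Z_2),J)$ endows $\Vone$ with a left $\mathbb H$-module structure under which $\Vone\cong\mathbb H^l$, with the quaternionic units $i,j,k$ acting as $J(Z_1),J(Z_2),J$ respectively.

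Next, I would identify $\mathscr S(\mathfrak h)$. Since $\sigma\mapsto J(\sigma)$ is $\R$-linear and $\Vtwo=\mathrm{span}_{\R}\{Z_1,Z_2\}$, an orthogonal map $S\in O(\Vone)$ lies in $\mathscr S(\mathfrak h)$ if and only if it commutes with both $J(Z_1)$ and $J(Z_2)$. Any such $S$ then automatically commutes with the product $J(Z_1)J(Z_2)=J$, so $S$ is $\mathbb H$-linear. Hence $\mathscr S(\mathfrak h)$ is exactly the group of $\R$-orthogonal, $\mathbb H$-linear maps on $\mathbb H^l$, i.e.\ the compact symplectic group $\mathrm{Sp}(l)$, in agreement with the general classification recalled in the proof of Proposition \ref{p:2nd necessary conds}. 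Finally, $\mathrm{Sp}(l)$ acts transitively on the unit sphere of $\mathbb H^l$: given two unit vectors $u,v\in\mathbb H^l$, complete each to a quaternionic orthonormal basis and the corresponding change of basis lies in $\mathrm{Sp}(l)$ and sends $u$ to $v$. This is Property (H).

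The only real work is the identification of the quaternionic structure: verifying globally on $\Vone$ the anti-commutation $J(Z_1)J(Z_2)=-J(Z_2)J(Z_1)$ and the relation $J(Z_1)J(Z_2)=J$, which reduces to the block-diagonal application of Example \ref{eg:complex Heisnebrg}. Once $\Vone\cong\mathbb H^l$ is in place, both the identification $\mathscr S(\mathfrak h)\cong\mathrm{Sp}(l)$ and the transitivity on the sphere are standard.
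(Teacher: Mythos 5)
Your proposal is correct and follows essentially the same route as the paper's proof: both establish the relations $J(Z_1)^2=J(Z_2)^2=-I$, $J(Z_1)J(Z_2)=-J(Z_2)J(Z_1)=J$, identify $\mathscr S(\mathfrak h)$ with the quaternionic unitary group $\mathrm{Sp}(l)$ acting on $\Vone\cong\mathbb H^l$, and conclude by its transitivity on the unit sphere. The only difference is presentational — you phrase the identification abstractly via the $\mathbb H$-module structure, whereas the paper reorders the basis and exhibits the block matrices $I_1,I_2$ and the explicit form \eqref{e:L_q} of elements of $\mathscr S(\mathfrak h)$.
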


\begin{proof}
Consider the complex Lie algebra $\algg^c$ with a basis given by $\{W_1, \dots, W_n, W_{n+1}, \dots, W_{2n}, Z \}$, with the only nontrivial commutators given by
\[
[W_j,W_{n+j}]=Z.
\]
The real version is the algebra of Heisenberg type, of real dimension $4n+2$ which has a two-dimensional center. Let $J$ be the real operator corresponding to multiplication by $i$. The Lie bracket is $\C$-bilinear with respect to $J$,  hence, for $j=1,\dots, n$ we have
\begin{equation}\label{e:J and ad}
[W_j, W_{n+j}]=-[JW_j, JW_{n+j}]=Z \qquad \text{and}\qquad [JW_j, W_{n+j}]=[W_j, JW_{n+j}]=JZ.
\end{equation}
To obtain the real version of $\algg^c$ we consider the orthonormal basis of left-invariant vector fields given by
\begin{equation}\label{e:cmpl Heisenberg general}
\begin{aligned}
X_j\, (=W_j), \quad X_{j+1} \, (=W_{j+1}),& \quad X_{j+2} \, (=JW_{j+1}),\quad X_{j+3} \, (=JW_{j+2}), \quad j=1,\dots, n,\\
&  Z_1\, (=Z)\  \text{ and }\  Z_2\, (=JZ),
\end{aligned}
\end{equation}
with non-trivial brackets given according to \eqref{e:J and ad}, i.e., for $j=1,\dots, n$ we have
\begin{equation}\label{e:cmpl Heisenberg general commutators}
[X_j,X_{j+1} ]=-[X_{j+2} , X_{j+3}]=Z_1 \qquad\text{ and }\qquad [X_{j+2},X_{j+1} ]=[X_{j} , X_{j+3}]=Z_2.
\end{equation}
Letting $J_j=J_{Z_j}$, $j=1,2$, \eqref{J2} shows
\[
J_1X_j=X_{j+1}, \ J_1X_{j+2}=-X_{j+3}, \ J_2X_j=X_{j+3}, \ J_2X_{j+1}=-X_{j+2},
\]
while $\ J_j^2=-I, \ J_{Z_1}J_{Z_2}=-J_{Z_2}J_{Z_1}=J,$ hence the H-type property holds.  It is easy to see that after reordering the basis as follows,
\begin{multline}
Y_1=X_1, \, Y_2=X_{n+1}, \, Y_3=X_{2n+1}, \, Y_4=X_{3n+1}, \dots,\\ Y_{4n-3}=X_n,
 \, Y_{4n-2}=X_{2n}, \, Y_{4n-1}=X_{3n}, \, Y_{4n}=X_{4n},
\end{multline}
the complex structures $J_1$ and $J_2$ are given by $4n\times 4n$ real matrices
\[
J_1=\text{diag}(I_1, \dots, I_1),\quad  I_1=\left(
                                              \begin{array}{cccc}
                                                0   & -1 & 0 & 0 \\
                                                1 & 0& 0 & 0 \\
                                                0 & 0 & 0 & 1 \\
                                                0 & 0 & -1 & 0 \\
                                              \end{array}
                                            \right)
\]
and
\[
J_2=\text{diag}(I_2, \dots, I_2),\quad  I_2=\left(
                                              \begin{array}{cccc}
                                                0   & 0 & 0 & -1 \\
                                                0   & 0 & 1 & 0 \\
                                                0 & -1& 0 & 0 \\
                                                1 & 0 & 0 & 0 \\
                                              \end{array}
                                            \right).
\]

A small calculations shows that a $4n\times 4n$ real matrix $ A=(A_{ij})_{i,j=1}^n\in S(\mathfrak h)$, $A_{ij}\in \mathfrak{g}\mathfrak{l}(4,\R)$ if and only if $[I_k,A_{ij}]=0$. Switching back to the $X$ basis we see that $S$  is given by the matrices in the form
\begin{equation}\label{e:L_q}
S=\left(
    \begin{array}{cccc}
      D & -A & -B & -C \\
      A & D & -C & B \\
      B & C & D & -A \\
      A & -B & A & D\\
    \end{array}
  \right)\in O(4n).
\end{equation}
This is exactly the real form of left multiplication in $\Quat^n$ by the unitary matrix, $\bar U^t U=I$,
\[
U=D+Ai+Bj+Ck.
\]
This is the definition of the group $\text{Sp}(n)$, acting transitively on the $4n-1$ dimensional sphere, hence property (H) \ref{D:trans} holds true.

\end{proof}

We conclude the section with the

 \begin{proof}[Proof of Theorem \ref{t:property H}]
 It follows from Proposition \ref{p:2nd necessary conds}, Proposition \ref{P:iwa} and Proposition \ref{p:complex Heisnebrg}.

 \end{proof}

\section{Overdetermination and partial symmetry}\label{S:ops}

The objective of this section is to prove Theorem \ref{T:glps}, Proposition \ref{P:iips} and Theorem \ref{T:gsps}.
 We begin with the following partial solution of Problem 1.

\begin{proof}[Proof of Theorem \ref{T:glps}]
Suppose that $\Om\subset\bG$ have partial symmetry according to Definition \ref{D:psym}, and that $f$ be a solution to \eqref{i3}, \eqref{od}. Let $\Om_+^\star$ be as in Definition \ref{D:psym}. Consider $S\in \mathscr S(\mathfrak h)$ and define $g(z,\sigma) = f(Sz,\sigma)$. Since $\Om$ is invariant under the action of $\mathscr S(\mathfrak h)$, invoking \eqref{commnab} in Lemma \ref{L:gradinv} and \eqref{invplap} in Lemma \ref{L:Hlapinv}, and keeping in mind that $|Sz| = |z|$, we see that $g$ also satisfies problem \eqref{i3}, \eqref{od} in $\Om$. By the comparison principle in \cite[Lemma 2.6]{D} we infer that $g \equiv f$ in $\Om$. This means that for every $(z,\sigma)\in \Om$ and every $S\in \mathscr S(\mathfrak h)$
\[
f(Sz,\sigma) = f(z,\sigma).
\]
We now claim that by the Property (H), $f$ must be of the form \eqref{fgG}. To see this, pick two points $w, z$ on the unit sphere in $\mathfrak h$. By the assumption that $\bG$ satisfies the  Property (H), there exists $S\in \mathscr S(\mathfrak h)$ such that $w = S z$. But then, $f(w,\sigma) = f(Sz,\sigma) = f(z,\sigma)$, and therefore $f(z,\sigma) = \hat f(|z|,\sigma)$ for some function $\hat f(\xi,\sigma)$. By taking $g(\xi,\sigma) = \hat f(2\sqrt \xi,\sigma)$ for $\xi\ge 0$, it is clear that $f(z,\sigma) = g(\frac{|z|^2}4,\sigma)$.

If we now set $n = \frac{m+ p}2 + k$ (never mind the fact that, when $p\not=2$, the dimension $n$ might be fractal), we consider the connected bounded open set $\Om^{\star\star}$ in the space $\Rn$, with variables $(w,\sigma)$, where $w\in \R^\frac{m+ p}2$ and $\sigma\in \R^k$, defined as follows
\begin{equation}\label{Ommanina}
\Om^{\star\star} = \{(w,\sigma)\in \R^{\frac{m+p}2}\times\R^k\mid  (|w|,\sigma)\in \Om_+^\star\}.
\end{equation}
Note that for every $(w,\sigma)\in \Om^{\star\star}$ the section $(\R^{\frac{m+p}2}\times\{\sigma\})\cap \Om^{\star\star}$ is spherically symmetric in the variable $w$.
In $\Om^{\star\star}$ we analyse the function
\begin{equation}\label{gG}
G(w,\sigma) \overset{def}{=} g(|w|,\sigma).
\end{equation}
By \eqref{pcylDel} in Remark \ref{R:plap}, we see that the standard $p$-Laplacian of the function $G:\Om^{\star\star}\to \R$ is given by
\[
\Delta_p G = |\nabla g|^{p-2} \left[g_{\xi\xi} + \frac{\frac{m+p}2 -1}{\xi} g_\xi + \Delta_\sigma g + (p-2) \frac{\Delta_\infty g}{|\nabla g|^2}\right].
\]
Since $|z|^p = (|z|^2)^{\frac p2} = (4\xi)^{\frac p2} = 2^p \xi^{\frac p2}$, by Proposition \ref{P:pcylG}, and by the pde in \eqref{i3}, we see that $G$ satisfies in $\Om^{\star\star}$
the problem
\begin{equation}\label{jim}
\begin{cases}
\Delta_p G = - 2^p,
\\
\\
G_{\big|_{\p \Om^{\star\star}}} = 0,\ \ \ \ \ \ |\nabla G|_{\big|_{\p \Om^{\star\star}}} = 2c.
\end{cases}
\end{equation}
Since $\Delta_p(\la G) = \la^{p-1} \Delta_p G$, we deduce from \eqref{jim} that the function $F = 2^{-\frac{p}{p-1}} G$ solves $\Delta_p F = -1$ in $\Om^{\star\star}$ and $|\nabla F| = 2^{-1} c$ on $\p \Om^{\star\star}$.  By Theorem D in Section \ref{S:a} we infer that there exist $r>0$ and $\sigma_0\in \R^k$ such that  $\Om^{\star\star}$ is the Euclidean ball
\[
B_{e}(r)(0,\sigma_0) = \{(w,\sigma)\in \R^\frac{m+ p}2\times \R^k\mid |w|^2 + |\sigma-\sigma_0|^2 < r^2\}
\]
centred at the point $(0,\sigma_0)$ with radius $r$, and that from \eqref{fgl} we must have
\begin{equation}\label{fglgl}
G(w,\sigma) = 2^{\frac{p}{p-1}} \frac{p-1}{p n^{\frac{1}{p-1}}} \left(r^{\frac{p}{p-1}} - (|w|^2 + |\sigma-\sigma_0|^2)^{\frac{p}{2(p-1)}}\right).
\end{equation}
If we now choose $R>0$ such that $r = \frac{R^2}{4}$, and we keep in mind that $n = \frac{m+ p}2 + k = \frac{Q+p}2$, where $Q = m+2k$ is the homogeneous dimension of $\bG$, from the substitution $|w| = \xi = \frac{|z|^2}{4}$ and the relation \eqref{fgG}, we finally obtain
\[
f(z,\sigma) = \frac{p-1}{2p(Q+p)^{\frac{1}{p-1}}} \left(R^{\frac{2p}{p-1}} - N(z,\sigma-\sigma_0)^{\frac{2p}{p-1}}\right).
\]
This is exactly the formula \eqref{solball} in Proposition \ref{P:gauge}, after a left-translation by $(0,-\sigma_0)$ along the center of $\bG$.

\end{proof}

We next give the

\begin{proof}[Proof of Proposition \ref{P:iips}]
From the proof of Theorem \ref{T:glps} we know that the function  $F = 2^{-\frac{p}{p-1}} G$ solves $\Delta_p F = -1$ in $\Om^{\star\star}$ and $|\nabla F| = 2^{-1} c$ on $\p \Om^{\star\star}$.
To proceed in the discussion we recall that, in the work \cite{GL}, one crucial step in the proof of Theorem D above was to show that
the function
\begin{equation}\label{gl}
P^{\star\star}(x) = |\nabla G(x)|^p + \frac{p}{n(p-1)} G(x)
\end{equation}
satisfies the integral constraint
\begin{equation}\label{Pgl}
\int_{\Om^{\star\star}} P^{\star\star}(x) dx = a^p \operatorname{Vol}_n(\Om^{\star\star}).
\end{equation}
With the aid of \eqref{Pgl} the authors were able to show that the function $P^{\star\star}(x)$ must satisfy the pointwise constraint $P^{\star\star}(x) \equiv a^p$ in $\Om^{\star\star}$, and from this,  by an elaborate argument inspired to that of Weinberger in \cite{hans}, they proved that $\Om^{\star\star}$ must be a ball and $G$ spherically symmetric about its center. Since we presently have $a = 2c^{-1}$, we must have
in $\Om^{\star\star}$
\[
P^{\star\star}(w,\sigma) = |2^{-\frac{p}{p-1}} \nabla G(w,\sigma)|^p + \frac{p}{n(p-1)} 2^{-\frac{p}{p-1}} G(w,\sigma) \equiv (2^{-1} c)^p.
\]
Keeping in mind that $G(w,\sigma) = g(|w|,\sigma)$,  this identity becomes in terms of the function $g$
\[
|\nabla g(|w|,\sigma)|^p + \frac{p}{n(p-1)} 2^{p} g(|w|,\sigma) \equiv 2^{p} c^p.
\]
By \eqref{gradG}, and keeping \eqref{fgG} and the equations $\xi^{\frac p2} = \frac{|z|^p}{2^p}$ and $n = \frac{Q+p}2$ in mind, we finally obtain in terms of $f$ in $\Om$
\[
|\nh f|^p + \frac{2p}{(p-1)(Q+p)} |z|^p f = c^p |z|^p,
\]
which proves the proposition.

\end{proof}

\medskip

Finally, we present the

\begin{proof}[Proof of Theorem \ref{T:gsps}]
Assume that $\Om\subset \bG$ be a connected bounded open set having partial symmetry. Suppose $1<p<Q$ and that $f$ be a weak solution to \eqref{icap} satisfying the overdetermined condition \eqref{gradconv}. We argue similarly to the proof of Theorem \ref{T:glps} and consider $S\in \mathscr S(\mathfrak h)$, and define $g(z,\sigma) = f(Sz,\sigma)$. Since $\Om$ is invariant under the action of $\mathscr S(\mathfrak h)$, invoking \eqref{commnab} in Lemma \ref{L:gradinv} and \eqref{invplap} in Lemma \ref{L:Hlapinv}, and keeping in mind that $|Sz| = |z|$, we see that $g$ is also a weak solution to \eqref{icap} satisfying the overdetermined condition \eqref{gradconv}. By uniqueness, we must have $g \equiv f$ in $\bG\setminus \Om$, and therefore $f(Sz,\sigma) = f(z,\sigma)$ for every $(z,\sigma)\in \bG\setminus \Om$. By the Property (H) we conclude that $f$ must be of the form \eqref{fgG}. Similarly to what was done  before, we conclude that the function $G$ as in \eqref{gG} solves the problem
in $\R^{\frac{m+p}2}\times\R^k\setminus\Om^{\star\star}$
\begin{equation}\label{garsar}
\begin{cases}
\Delta_p G = 0,
\\
\\
G_{\big|_{\p \Om^{\star\star}}} = 1,\ \ \ G_{\big|_{\infty}} = 0,\ \ \ |\nabla G|_{\big|_{\p \Om^{\star\star}}} = 2c.
\end{cases}
\end{equation}
At this point we would like to apply Theorem E in Section \ref{S:a} to conclude that there exist $\sigma_0\in \R^k$ such that, with $ n = \frac{m+p}2 +k$, we have for $r = \frac{n-p}{2(p-1)c}$ that
\[
\Om^{\star\star} = \{(w,\sigma)\in \Rn\mid |w|^2 + |\sigma-\sigma_0|^2 <r\},
\]
and
\begin{equation}\label{GG}
G(w,\sigma) = \left(\frac{r}{(|w|^2 + |\sigma-\sigma_0|^2)^{\frac 14}}\right)^{\frac{n-p}{p-1}}.
\end{equation}
This implementation is possible if our hypothesis that $1<p<Q = m+2k$ implies that $1<p<n$. But in fact, the inequality $p<n = \frac{m+p}2 +k = \frac{Q+p}2$, is equivalent to $p<Q$, and so we can apply Theorem E in Section \ref{S:a}. Since in \eqref{GG} we have $\xi = |w| = \frac{|z|^2}4$, using elementary calculations and the fact $\frac{n-p}{p-1} = \frac{Q-p}{2(p-1)}$, after taking $R>0$ such that $r = \frac{R^2}4$, we finally conclude that
\[
f(z,\sigma) = \left(\frac{R}{N(z,\sigma-\sigma_0)}\right)^{\frac{Q-p}{p-1}},
\]
and that $\Om$ is the gauge ball $B_R(0,\sigma_0)$.
This completes the proof.

\end{proof}

\medskip

We close this section with a result which has a geometric interest and concerns the \emph{horizontal mean curvature} $\mathscr H$ of a non-characteristic hypersurface. This notion was introduced in \cite[Theorem 9.1
\& Definition 9.8]{DGN} as the trace of the horizontal shape operator. Let $\Om\subset\bG$ be a bounded open set of class $C^1$. With $\nu$ being the (Riemannian) outer unit normal on $\p \Om$, we define
\begin{equation}\label{Nhv}
N_H = \sum_{i=1}^m \langle\nu,X_i\rangle X_i.
\end{equation}
We recall that the \emph{characteristic set} $\Sigma$ of $\Om$ is
\begin{equation}\label{Sigma}
\Sigma = \{x\in \p \Om\mid |N_H(x)|^2 = \sum_{i=1}^m \langle\nu(x),X_i(x)\rangle^2 = 0\}.
\end{equation}
This notion was introduced by Fichera in \cite{Fi1}, \cite{Fi2}. It is well-known by now that the set $\Sigma$ is where the devil hides when it comes to boundary value problems for the relevant PDEs in sub-Riemannian geometry. For results on the size of the characteristic set one should see \cite{De1}, \cite{De2}, \cite{Ba}, \cite{Ma}. Assume now that $\Om$ be a connected bounded open set with partial symmetry according to Definition \ref{D:psym}, and assume that $f$ as in \eqref{fgG} be a local defining function for the $\p \Om$, with $g\in C^2(U^\star)$, where $U^\star\supset \Om^\star$ is an open set. As it was pointed out in \cite{CG}, for a domain with partial symmetry one has $\Sigma = \p \Om \cap \{(0,\sigma)\in \bG\}$.
We will need the following result, which is \cite[Proposition 2.6]{Gmanu}.

\begin{proposition}\label{P:H}
At every point of $\p \Om\setminus \Sigma$ one has in terms of a
local defining function $f$ of $\p \Om$
\begin{align*}
|\nh f|^3 \mathscr H =  |\nh f|^2 \sul f -
\Delta_{H,\infty} f.
\end{align*}
\end{proposition}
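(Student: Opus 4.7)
The plan is to reduce the identity to a direct calculation using the expression for the horizontal unit normal of $\p\Om$ in terms of a defining function $f$. Choose $f$ so that $\p\Om = \{f=0\}$ locally and $f$ is negative inside $\Om$, so that $\nu = \nabla f/|\nabla f|$. Since $\langle \nu,X_i\rangle = X_i f/|\nabla f|$, the horizontal normal defined in \eqref{Nhv} is $N_H = |\nabla f|^{-1}\sum_i X_i f\,X_i$, and on $\p\Om\setminus\Sigma$ the horizontal unit normal equals
\[
\nu_H \;=\; \frac{N_H}{|N_H|} \;=\; \frac{\nh f}{|\nh f|},
\]
which depends only on $\nh f$ (not on $|\nabla f|$). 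By the definition in \cite{DGN} the horizontal mean curvature is the horizontal divergence of this vector, i.e.\ $\mathscr H = \operatorname{div}_H\!\big(\nh f/|\nh f|\big) = \sum_{i=1}^m X_i\!\left(X_i f/|\nh f|\right)$.

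Next I would expand the sum by the quotient rule:
\[
\mathscr H \;=\; \frac{\sul f}{|\nh f|} \;-\; \sum_{i=1}^m \frac{X_i f\;X_i(|\nh f|)}{|\nh f|^2}.
\]
Computing $X_i(|\nh f|)$ from $|\nh f|^2 = \sum_j (X_j f)^2$ gives $X_i(|\nh f|) = \sum_j X_j f\,X_i X_j f/|\nh f|$, hence
\[
\sum_{i=1}^m X_i f\;X_i(|\nh f|) \;=\; \frac{1}{|\nh f|}\sum_{i,j=1}^m X_i f\,X_j f\,X_i X_j f.
\]
On the other hand, directly from the definition \eqref{LapHinfty} one has $\Delta_{H,\infty} f = \tfrac12\langle\nh(|\nh f|^2),\nh f\rangle = \sum_{i,j} X_i f\,X_j f\,X_i X_j f$, so the last display equals $\Delta_{H,\infty} f/|\nh f|$.

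Substituting back, one obtains
\[
\mathscr H \;=\; \frac{\sul f}{|\nh f|} - \frac{\Delta_{H,\infty} f}{|\nh f|^3},
\]
and multiplying through by $|\nh f|^3$ (which is nonzero on $\p\Om\setminus\Sigma$ by \eqref{Sigma}) yields the stated identity. No genuine obstacle is anticipated: the only issue is a clean bookkeeping of the quotient rule together with recognising the $\Delta_{H,\infty}$-term, both of which are elementary once $\nu_H$ is written as $\nh f/|\nh f|$.
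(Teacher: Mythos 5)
Your computation is correct, and it fills in an argument the paper itself does not give: Proposition \ref{P:H} is simply quoted from \cite[Proposition 2.6]{Gmanu} with no proof, so there is no "paper route" to compare against beyond that citation. Your quotient-rule expansion of $\sum_i X_i\big(X_if/|\nh f|\big)$ is the standard derivation, and the key identification
\[
\sum_{i=1}^m X_i f\, X_i(|\nh f|)\;=\;\frac{1}{|\nh f|}\sum_{i,j=1}^m X_i f\, X_j f\, X_iX_j f\;=\;\frac{\Delta_{H,\infty}f}{|\nh f|}
\]
checks out exactly (note that both sides carry the horizontal Hessian in the same order $X_iX_j$, so no symmetrization issue arises even though the $X_i$ do not commute). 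Two small points you should make explicit. First, your argument takes as its starting point that $\mathscr H=\operatorname{div}_H\big(\nh f/|\nh f|\big)$ at non-characteristic points; since \cite{DGN} defines $\mathscr H$ as the trace of the horizontal shape operator, this identification is itself a (known, but not tautological) fact, the sub-Riemannian analogue of $\operatorname{div}\nu$ equalling the Euclidean mean curvature because $\langle\nabla_\nu\nu,\nu\rangle=0$; as written you are essentially proving the proposition from that reformulation rather than from the shape-operator definition. Second, the sign of $\mathscr H$ depends on the orientation of the defining function (the right-hand side of the identity is odd under $f\mapsto -f$ while $|\nh f|^3$ is even), so your normalization $f<0$ inside $\Om$ is the one that makes the stated formula hold with the outer normal; this is consistent with how the formula is used later in Propositions \ref{P:Hps} and \ref{P:bubbleps}, where the gauge ball comes out with positive $\mathscr H$.
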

The reader should note that Proposition \ref{P:H} represents the limiting case $p\to 1$ of \eqref{plaplap} above. Using Proposition \ref{P:H} we obtain the following interesting result.

\begin{proposition}\label{P:Hps}
Suppose that in a neighborhoud of $x=(z,\sigma)\in \p \Om\setminus \Sigma$ the domain is described by \eqref{fgG}, then we have
\[
\mathscr H(x) = \frac{|z|}2 \left\{\frac{1}{|\nabla g|}\left[g_{\xi\xi} + \frac{m-1}{2\xi} g_\xi + \Delta_\sigma g\right] - \frac{\Delta_\infty g}{|\nabla g|^3}\right\},
\]
where all quantities involving $g$ in the square bracket in the right-hand side are evaluated at the point $(\frac{|z|^2}4,\sigma)$.
\end{proposition}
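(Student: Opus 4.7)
The strategy is to combine Proposition \ref{P:H}, which expresses the horizontal mean curvature as
\[
\mathscr H = \frac{|\nh f|^2 \sul f - \Delta_{H,\infty} f}{|\nh f|^3},
\]
with the three explicit formulas for $|\nh f|^2$, $\sul f$ and $\Delta_{H,\infty} f$ already worked out in Section \ref{S:prelim} for a function of partial symmetric form $f(z,\sigma) = g(|z|^2/4,\sigma)$. Specifically, Lemma \ref{L:nice} gives $|\nh f|^2 = \xi |\nabla g|^2$ and $\sul f = \xi\big(g_{\xi\xi} + \tfrac{m}{2\xi} g_\xi + \Delta_\sigma g\big)$, while formula \eqref{inftycyl} inside the proof of Proposition \ref{P:pcylG} gives
\[
\Delta_{H,\infty} f = \xi^2\, \Delta_\infty g + \tfrac{\xi}{2}\, |\nabla g|^2\, g_\xi.
\]
All of these are to be evaluated at $(\xi,\sigma) = (|z|^2/4,\sigma)$.

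The plan is then to substitute these expressions into the formula of Proposition \ref{P:H}. The numerator becomes
\[
|\nh f|^2 \sul f - \Delta_{H,\infty} f = \xi^2 |\nabla g|^2\!\left[g_{\xi\xi} + \tfrac{m}{2\xi} g_\xi + \Delta_\sigma g\right] - \xi^2 \Delta_\infty g - \tfrac{\xi}{2}|\nabla g|^2 g_\xi,
\]
and collecting the two contributions proportional to $|\nabla g|^2 g_\xi$ turns the Bessel-type coefficient $\frac{m}{2\xi}$ into $\frac{m-1}{2\xi}$. Dividing by $|\nh f|^3 = \xi^{3/2} |\nabla g|^3$ and using $\xi^{1/2} = |z|/2$ produces exactly the claimed expression.

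The computation is entirely routine algebra once the three ingredients are in hand, so there is no genuine obstacle; the only point of care is to keep track of the cancellation between the $\tfrac{m}{2\xi}$ term coming from $\sul f$ and the $\tfrac{\xi}{2}|\nabla g|^2 g_\xi$ term coming from $\Delta_{H,\infty} f$, which is precisely what produces the shift from $m$ to $m-1$ in the Bessel coefficient in the final formula. Note in passing that this matches the general pattern of Remark \ref{R:plap}: the curvature operator of Proposition \ref{P:Hps} is the $|z|/2$ multiple of the standard mean curvature operator in $\R^{(m+1)/2+k}$ acting on the spherically symmetric representative of $g$, which is the $p\to 1$ analogue of Proposition \ref{P:pcylG}.
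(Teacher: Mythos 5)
Your proposal is correct and follows exactly the paper's own proof: the authors likewise substitute \eqref{gradG}, \eqref{bg2} and \eqref{inftycyl} into the identity of Proposition \ref{P:H}, arriving at $\xi^{3/2}|\nabla g|^3 \mathscr H = \xi^2|\nabla g|^2\big(g_{\xi\xi}+\tfrac{m}{2\xi}g_\xi+\Delta_\sigma g\big)-\xi^2|\nabla g|^2\big\{\tfrac{\Delta_\infty g}{|\nabla g|^2}+\tfrac{g_\xi}{2\xi}\big\}$, after which the same cancellation produces the shift from $m$ to $m-1$ and the factor $\xi^{1/2}=|z|/2$. Your closing remark about the $p\to 1$ Bessel dimension $\tfrac{m+1}{2}+k$ is also consistent with what the paper uses in Proposition \ref{P:bubbleps}.
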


\begin{proof}
From \eqref{gradG} and \eqref{bg2} in Lemma \ref{L:nice}, with $\xi = \frac{|z|^2}4$ we obtain
\begin{align*}
\xi^{\frac 32} |\nabla g|^3 \mathscr H = \xi^2 |\nabla g|^2 \left(g_{\xi\xi} + \frac{m}{2\xi}\ g_\xi+ \Delta_\sigma  g\right) - \xi^2 |\nabla g|^2\left\{\frac{\Delta_\infty g}{|\nabla g|^2} + \frac{g_{\xi}}{2\xi}\right\},
\end{align*}
which implies the desired conclusion.

\end{proof}

Proposition \ref{P:Hps} has the following notable consequence.

\begin{proposition}\label{P:bubbleps}
Suppose $\Om\subset \bG$ be a $C^2$ domain with partial symmetry, and assume that there exist $\alpha>0$ such that, outside the characteristic set $\Sigma$ one has $\mathscr H(x) = \alpha |z|$. Then, there exist $\sigma_0\in \R^k$ and $R>0$ such that $\Om$ is a gauge ball $B_R(0,\sigma_0)$, i.e.
\[
\Om = \{(z,\sigma)\in \bG\mid |z|^4 + 16|\sigma-\sigma_0|^2 < R^4\}.
\]
\end{proposition}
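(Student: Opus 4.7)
The plan is to follow the partial-symmetry reduction strategy used in Theorems \ref{T:glps} and \ref{T:gsps}. Since $\Om$ has partial symmetry and is of class $C^2$, for every $x=(z,\sigma)\in \p\Om\setminus\Sigma$ there exists a neighborhood of $x$ in which $\p\Om$ is described by $f(z,\sigma)=0$ with $f(z,\sigma) = g(|z|^2/4,\sigma)$ for a $C^2$ function $g$ defined on an open set containing $\overline{\Om_+^\star}$ and satisfying $|\nabla g|\neq 0$ on $\p\Om^\star$. Inserting this $f$ into Proposition \ref{P:Hps}, the hypothesis $\mathscr H(x)=\alpha|z|$ is equivalent, on the regular part of $\p\Om_+^\star$ (that is, away from $\{\xi=0\}$), to the PDE
\begin{equation*}
\dive_{(\xi,\sigma)}\!\left(\frac{\nabla g}{|\nabla g|}\right) + \frac{m-1}{2\xi}\,\frac{g_\xi}{|\nabla g|} \ =\ 2\alpha.
\end{equation*}
Equivalently, $\p\Om_+^\star$ has constant weighted mean curvature equal to $2\alpha$ in the half-space $(\R\times\R^k)\cap\{\xi>0\}$ endowed with the weight $w(\xi)=\xi^{(m-1)/2}$.

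This weighted equation admits a transparent cylindrical Euclidean reading. By exactly the same calculation that produces \eqref{bg2} and \eqref{pisub2}, if one lifts $g$ to $G(w,\sigma):= g(|w|,\sigma)$ with $w$ varying in a formal Euclidean space of dimension $d=(m+1)/2$, then the displayed PDE for $g$ is precisely the classical constant mean curvature equation $\dive(\nabla G/|\nabla G|) = 2\alpha$ for the level set $\{G=0\}$ in $\R^d\times\R^k$. As in Theorems \ref{T:glps} and \ref{T:gsps}, the fact that $d$ need not be an integer is merely a bookkeeping device: every step reduces to the PDE for $g$ in the variables $(\xi,\sigma)$, where all coefficients are perfectly smooth for $\xi>0$.

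To extract rigidity I would run the Alexandrov--Serrin method of moving planes in each $\sigma$-hyperplane. Since the degenerate radial coefficient $(m-1)/(2\xi)$ acts only in the $\xi$-direction, the comparison principle and Hopf's lemma for the nonlinear elliptic operator arising from the weighted CMC equation yield, in the standard fashion, the existence of $\sigma_0\in\R^k$ such that $\Om_+^\star$ is invariant under every reflection across a hyperplane through $\sigma_0$; combined with the $\xi$-symmetry built into partial symmetry, this forces $g$ to depend only on $\xi$ and $\rho=|\sigma-\sigma_0|$. The weighted CMC equation then reduces to a second-order ODE for the boundary curve in the quarter-plane $\{\xi\ge 0,\ \rho\ge 0\}$ meeting both axes orthogonally, and one verifies directly that the quarter-circles $\xi^2+\rho^2=r^2$ (these solve $\mathscr H = \alpha|z|$ with $\alpha=(Q-1)/R^2$ by the computation already carried out for the gauge ball via Lemma \ref{L:Xec}) exhaust the family of bounded connected solutions. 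Pulling back through $\xi=|z|^2/4$ and $\rho=|\sigma-\sigma_0|$ yields $\Om = \{(z,\sigma)\in\bG\mid |z|^4+16|\sigma-\sigma_0|^2 < R^4\}$ with $R=2\sqrt r$.

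The main obstacle will be making the moving-planes comparison rigorous across the characteristic set: when a moving $\sigma$-plane first touches $\p\Om$ at a point of $\Sigma$ the reflected and original configurations may be tangent precisely on the axis $\{\xi=0\}$, where the weighted operator degenerates and Hopf's boundary lemma does not apply in its classical form. A cleaner alternative that bypasses this technicality entirely would be to invoke (or develop) a dedicated Alexandrov-type theorem for rotationally symmetric CMC hypersurfaces in the fractional-dimensional cylinder $\R^{(m+1)/2}\times\R^k$, applied here in precisely the same spirit in which Theorems D and E of the appendix are invoked in the proofs of Theorems \ref{T:glps} and \ref{T:gsps}.
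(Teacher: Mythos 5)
Your reduction is exactly the paper's: you invoke Proposition \ref{P:Hps} to rewrite $\mathscr H=\alpha|z|$ as a constant (weighted) mean curvature equation for $g$, and you perform the same lift $G(w,\sigma)=g(|w|,\sigma)$ to the cylinder $\R^{\frac{m+1}{2}}\times\R^k$. Where you diverge is in the rigidity step, and that is where your primary route has real gaps. The paper finishes in essentially one line: since the classical mean curvature of $\p\Om^{\star\star}$ at a non-degenerate point is $H=\frac{\Delta G}{|\nabla G|}-\frac{\Delta_\infty G}{|\nabla G|^3}$, Proposition \ref{P:Hps} gives $\mathscr H(\overline z,\overline\sigma)=\frac{|\overline z|}{2}\,H(\overline w,\overline\sigma)$, so the hypothesis becomes $H\equiv 2\alpha$ on the closed embedded $C^2$ hypersurface $\p\Om^{\star\star}$, and the classical Alexandrov soap bubble theorem \cite{A}, \cite{Re} forces $\Om^{\star\star}$ to be a Euclidean ball of radius $r=\frac{n-1}{2\alpha}=\frac{Q-1}{4\alpha}$; pulling back through $\xi=\frac{|z|^2}{4}$ gives the gauge ball. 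In other words, the ``cleaner alternative'' you relegate to a closing remark is the paper's actual proof (with the half-integer dimension $n=\frac{m+1}{2}+k$ treated there, as elsewhere in the paper, as pure bookkeeping).

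By contrast, the route you actually propose --- moving planes in the $\sigma$-hyperplanes for the weighted CMC equation, followed by an ODE classification in the quarter-plane --- is not complete as written. You correctly flag the first gap yourself: when the reflected cap first touches the boundary on the axis $\{\xi=0\}$ the operator degenerates and the classical Hopf lemma is unavailable, and you offer no substitute. There is a second gap you gloss over: even after rotational symmetry in $\sigma$ is established, the claim that quarter-circles ``exhaust the family of bounded connected solutions'' of the constant weighted-curvature ODE is precisely a Delaunay-type classification; the analogous classical family also contains unduloid- and nodoid-type profiles, and excluding them requires the embeddedness and closedness of the surface --- i.e., the content of Alexandrov's theorem, which you would then be reproving by hand in a degenerate setting. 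Neither gap is fatal to the strategy in principle, but both must be filled; the paper's argument avoids them entirely by applying Alexandrov's theorem to the lifted hypersurface directly.
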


\begin{proof}
Consider the set $\Om^\star \subset \R\times \R^k$ introduced in Definition \ref{D:psym} and let $(\overline \xi,\overline \sigma)\in \p \Om^\star$ be such that if $(\overline z,\overline \sigma)\in \p \Om\setminus \Sigma$, then $\overline \xi = \frac{|\overline z|^2}4>0$. The function $g(\xi,\sigma)$ is a $C^2$ local defining function of $\p \Om^\star$.
If we now set $n = \frac{m+ 1}2 + k$ (never mind the fact that the dimension $n$ is an integer plus $1/2$), we consider the connected bounded open set $\Om^{\star\star}$ in the space $\Rn$, with variables $(w,\sigma)$, where $w\in \R^\frac{m+ 1}2$ and $\sigma\in \R^k$, defined as follows
\begin{equation}\label{Ommaninano}
\Om^{\star\star} = \{(w,\sigma)\in \R^{\frac{m+1}2}\times\R^k\mid  (|w|,\sigma)\in \Om^\star\},
\end{equation}
where $\Om^\star$ is as in Definition \ref{D:psym}.
In $\Om^{\star\star}$ we consider the local defining function
\begin{equation}\label{gGH}
G(w,\sigma) \overset{def}{=} g(|w|,\sigma).
\end{equation}
Notice that, if we now indicate $\xi = |w|$, then we have
\begin{equation}\label{ND}
\begin{cases}
|\nabla G(\overline w,\overline \sigma)| = |\nabla g(\overline \xi,\overline \sigma)|>0,
\\
\text{and}
\\
\Delta G(\overline w,\overline \sigma) = g_{\xi\xi}(\overline \xi,\overline \sigma) + \frac{m-1}{2\overline \xi} g_\xi(\overline \xi,\overline \sigma) + \Delta_\sigma g(\overline \xi,\overline \sigma).
\end{cases}
\end{equation}
It is a classical fact that the standard mean curvature of $\p \Om^{\star\star}$ at a point $(\overline w,\overline \sigma)$ where $\nabla G\not= 0$ is given by
\[
H = \frac{\Delta G}{|\nabla G|}  -
\frac{\Delta_{\infty} G}{|\nabla G|^3}.
\]
Combining this observation with \eqref{ND} and Proposition \ref{P:Hps}, we finally obtain at $(\overline z,\overline \sigma)$
\[
\mathscr H(\overline z,\overline \sigma) = \frac{|\overline z|}2\ H(\overline w,\overline \sigma).
\]
If now $\mathscr H(\overline z,\overline \sigma) = \alpha |\overline z|$, we conclude that
\[
H(\overline w,\overline \sigma) = 2\alpha.
\]
By the arbitrariness of $(\overline z,\overline \sigma)\in \p \Om\setminus \Sigma$, and the continuity of $H$, we finally see that it must be on $\p \Om^{\star\star}$
\[
H\ \equiv\ 2\alpha.
\]
By Alexandrov's soap bubble theorem, see \cite{A} and also \cite{Re}, we infer that $\Om^{\star\star}$ must be a Euclidean ball $\{(w,\sigma)\in \R^{\frac{m+1}2}\times\R^k\mid |w|^2 + (\sigma-\sigma_0)^2 < r^2\}$, with $r = \frac{n-1}{2\alpha} = \frac{Q-1}{4\alpha}$. Since $\xi = |w|$, we conclude from \eqref{Ommani} that $\Om$ must be a gauge ball $B_R(0,\sigma_0)$ of radius $R>0$, with $r = \frac{R^2}4$.

\end{proof}


\section{Appendix}\label{S:a}

In this section we collect two known results from the works \cite{GL} and \cite{GS} that play a critical role in the proofs of Theorem \ref{T:glps}, Proposition \ref{P:iips} and of Theorem \ref{T:gsps}. The former of these results is Theorem D below, the latter is Theorem E below. Consider the standard $p$-Laplacian whose action on a function $G:\Rn\to \R$ is given by
\begin{equation}\label{plapclassic}
\Delta_p G = \operatorname{div}(|\nabla G|^{p-2} \nabla G) = |\nabla G|^{p-2} \left[\Delta G + (p-2) \frac{\Delta_\infty G}{|\nabla G|^2} \right],\quad\quad\quad\quad 1<p<\infty,
\end{equation}
where the $\infty$-Laplacian is defined by
\begin{equation}\label{infty}
\Delta_\infty G = \frac 12 \sa\nabla(|\nabla G|^2),\nabla G\da.
\end{equation}
We remark that the ``dimensionality" in \eqref{plapclassic} is hidden in the term $\Delta G$ in the right-hand side of \eqref{plapclassic}. This comment has been further elucidated in Remarks \ref{R:lap} and \ref{R:plap} above. As we have mentioned in Section \ref{S:intro}, the critical tools that allow us to connect the sub-Riemannian Theorems \ref{T:glps}, \ref{T:gsps} to the Euclidean Theorems D and E are Proposition \ref{P:pcylG}, Lemmas \ref{L:Hlapinv} and \ref{L:gradinv}. For instance, for every $1<p<\infty$, Proposition \ref{P:pcylG} converts the solution $f$ to \eqref{i3}, \eqref{od} into that of Theorem D, in a space of dimension $n = \frac{m+p}2 + k$. Since, as we have mentioned, in a group of Heisenberg type the complex structure induced by the map $J:\mathfrak v \to \operatorname{End}(\mathfrak h)$ in \eqref{J}  forces $m$ to be even, the number $n$ is always an integer when $p =2$. If $p\not=2$, $n$ could be a fractal dimension, but this is inconsequential for our purposes. What matters here is that $n = \frac{Q+p}2$, where $Q = m+2k$ is the homogeneous dimension of $\bG$.

The following statement is \cite[Theorem 1]{GL}, specialised to the operator \eqref{plapclassic}.

\medskip

\noindent \textbf{Theorem D.}\label{T:gl}
\emph{Let $\Om^{\star\star}\subset \Rn$ be a connected, bounded open set and suppose for fixed $p$, $1<p<\infty$, that $G\in W^{1,p}_0(\Om^{\star\star})$ is a nonnegative weak solution to
\begin{equation}\label{pserrin}
\Delta_p G = \operatorname{div}(|\nabla G|^{p-2} \nabla G) = - 1.
\end{equation}
Suppose for some $a>0$ that $|\nabla G(x)|\to a$, $G(x)\to 0$ as $x\to \p \Om^{\star\star}$ in the following sense: Given $\ve>0$ there exists an open set $O = O(\ve) \supset \p \Om^{\star\star}$ such that
\begin{equation}\label{gradgl}
||\nabla G(x)| - a|<\ve,\ \ \ \ G(x)<\ve,
\end{equation}
for a.e. $x\in \Om^{\star\star}\cap O$ with respect to Lebesgue $n$-measure. Then $\Om^{\star\star}$ is a (Euclidean) ball $B_e(x_0,r)$ and
\begin{equation}\label{fgl}
G(x) = \frac{p-1}{p n^{\frac{1}{p-1}}} \left(r^{\frac{p}{p-1}} - |x-x_0|^{\frac{p}{p-1}}\right).
\end{equation}}

\medskip

We emphasise that in Theorem D no regularity assumption is made on $\Om^{\star\star}$, and that Serrin's overdetermined condition $\frac{\p G}{\p \nu} = c$ on $\p \Om^{\star\star}$ is replaced  by the much weaker measure theoretic assumption \eqref{gradgl}. We also reiterate that in \eqref{fgl} the dimension $n$ explicitly shows up \emph{only} in the constant $n^{\frac{1}{p-1}}$. Also note that when $2<p<\infty$ the function in \eqref{fgl} does not have continuous second partials in $B(x_0,r)$. As it is well-known, the optimal regularity for weak solutions of \eqref{plapclassic} is $C^{1,\alpha}_{loc}$.

\medskip

We next state \cite[Theorem 1.1]{GS}. The reader should keep in mind that, in such result, the overdetermined boundary condition $|\nabla G|=a>0$ on $\partial{\Omega}$ is again assumed in the sense of \eqref{gradgl} above. We also mention that in \cite{GS} such result was proved under the additional assumption that $\Om$ be starlike. This hypothesis was more recently removed by Poggesi in \cite[Theorem 1.1]{Po}. To provide the reader with additional historical background, we recall that, using some of the ideas in \cite{AG}, Reichel was able to adapt in \cite{Rei} the method of moving planes to smooth exterior domains and smooth solutions of equations of $p$-Laplacian type, thus proving the radial symmetry for exterior  problems such as the capacitary one. The proof in \cite{GS} was completely different, and used no a priori smoothness of either the ground domain or the solution. It was based on a priori estimates, blowup arguments and integral identities. It ultimately relied on a special $P$-function and on the Alexandrov's soap bubble theorem.

\medskip

\noindent \textbf{Theorem E.}\label{T:gs}
\emph{Let $\Om^{\star\star}\subset \Rn$ be a connected, bounded open set and suppose that $1<p<n$. Denote by $G$ its classical capacitary potential. Then $|\nabla G|=a>0$ on $\partial{\Omega^{\star\star}}$ if and only if
$\Omega^{\star\star}$ is a (Euclidean) ball $B_{e}(x_0,r)$ with radius $r =\frac{n-p}{(p-1)a}.$
The solution is then spherically symmetric about the center $x_0$ of this ball and it is given by
\begin{equation}\label{gs}
G(x)  = \left(\frac{r}{|x-x_0|}\right)^{\frac{n-p}{p-1}}.
\end{equation}}

\vskip 0.2in


\bibliographystyle{amsplain}

\end{document}